\DeclareMathOperator{\cl}{Cl_2}
\begin{document}

\title{Partitions and Sylvester waves}

\author{Cormac O'Sullivan\footnote{
\newline
{\em 2010 Mathematics Subject Classification.}  11P82, 41A60
\newline
{\em Key words and phrases.} Restricted partitions, Sylvester waves, asymptotics, saddle-point method.
\newline
This research was supported, in part, by a grant of computer time from the City University of New York High Performance Computing Center under NSF Grants CNS-0855217, CNS-0958379 and ACI-1126113.
Support for this project was also provided by a PSC-CUNY Award, jointly funded by The Professional Staff Congress and The City University of New York.}}

\date{July 15, 2017}

\maketitle

\def\s#1#2{\langle \,#1 , #2 \,\rangle}

\def\H{{\mathbf{H}}}
\def\F{{\frak F}}
\def\C{{\mathbb C}}
\def\R{{\mathbb R}}
\def\Z{{\mathbb Z}}
\def\Q{{\mathbb Q}}
\def\N{{\mathbb N}}
\def\G{{\Gamma}}
\def\GH{{\G \backslash \H}}
\def\g{{\gamma}}
\def\L{{\Lambda}}
\def\ee{{\varepsilon}}
\def\K{{\mathcal K}}
\def\Re{\mathrm{Re}}
\def\Im{\mathrm{Im}}
\def\PSL{\mathrm{PSL}}
\def\SL{\mathrm{SL}}
\def\Vol{\operatorname{Vol}}
\def\lqs{\leqslant}
\def\gqs{\geqslant}
\def\sgn{\operatorname{sgn}}
\def\res{\operatornamewithlimits{Res}}
\def\li{\operatorname{Li_2}}
\def\lip{\operatorname{Li}'_2}
\def\pl{\operatorname{Li}}

\def\nb{{\mathcal B}}
\def\cc{{\mathcal C}}

\def\clp{\operatorname{Cl}'_2}
\def\clpp{\operatorname{Cl}''_2}
\def\farey{\mathscr F}

\newcommand{\stira}[2]{{\genfrac{[}{]}{0pt}{}{#1}{#2}}}
\newcommand{\stirb}[2]{{\genfrac{\{}{\}}{0pt}{}{#1}{#2}}}
\newcommand{\norm}[1]{\left\lVert #1 \right\rVert}


\newtheorem{theorem}{Theorem}[section]
\newtheorem{lemma}[theorem]{Lemma}
\newtheorem{prop}[theorem]{Proposition}
\newtheorem{conj}[theorem]{Conjecture}
\newtheorem{cor}[theorem]{Corollary}
\newtheorem{assume}[theorem]{Assumptions}

\newcounter{coundef}
\newtheorem{adef}[coundef]{Definition}

\renewcommand{\labelenumi}{(\roman{enumi})}
\newcommand{\spr}[2]{\sideset{}{_{#2}^{-1}}{\textstyle \prod}({#1})}
\newcommand{\spn}[2]{\sideset{}{_{#2}}{\textstyle \prod}({#1})}

\numberwithin{equation}{section}

\bibliographystyle{alpha}

\begin{abstract}
The restricted partition function $p_N(n)$ counts the partitions of the integer $n$ into at most $N$ parts. In the nineteenth century Sylvester described these partitions as a sum of waves. We give detailed descriptions of these waves and, for the first time,  show the asymptotics of the initial waves as $N$ and $n$ both go to infinity at about the same rate. This allows us to see when the initial waves are a good approximation to $p_N(n)$ in this situation. Our proofs employ  the saddle-point method of Perron and the dilogarithm.
\end{abstract}

\section{Introduction}
\subsection{Decomposing partitions into  waves}
Let $p(n)$ be the number of  partitions of the integer $n$. This is the number of ways to write $n$ as a sum of non-increasing positive integers.
Also let $p_N(n)$ count the partitions of $n$ with at most $N$ summands. (We are following the notation that was convenient in \cite{Ra,OS15,OS1}.) As usual, $p(0)$ and $p_N(0)$ are defined to be $1$.
Since the work of Cayley \cite{Ca} and Sylvester \cite{Sy3}%
, we know that
\begin{equation} \label{sylthm}
    p_N(n)=\sum_{k=1}^N W_k(N,n) \qquad \quad (N \in \Z_{\gqs 1},\ n \in \Z_{\gqs 0})
\end{equation}
where each $W_k(N,n)$ may be expressed in terms of a  sequence of $k$ polynomials $w_{k,m}(N,x) \in \Q[x]$ for $0\lqs m \lqs k-1$. Similarly to \cite[p. 641]{SZ12} we write
\begin{equation} \label{wkv}
    W_k(N,n) = \bigl[ w_{k,0}(N,n), \ w_{k,1}(N,n), \ \ldots, \ w_{k,k-1}(N,n) \bigr],
\end{equation}
where the notation in \eqref{wkv} indicates that the value of $W_k(N,n)$ is given by one of the polynomials on the right and we select $w_{k,j}(N,n)$ when $n \equiv j \bmod k$. As we will see, the degrees of the polynomials on the right of \eqref{wkv} are at most $\lfloor N/k \rfloor -1$.

For example,  with $N=5$ we have $p_5(n)=W_1(5,n)+ \cdots + W_5(5,n)$
where
\begin{align}
W_1(5,n) & = \left[30 n^4+900 n^3+9300 n^2+38250 n+50651 \right]/86400, \label{w15}\\
    W_2(5,n) & = \left[ 2n+15, \ -2n-15 \right]/128,\notag\\
    W_3(5,n) & = \left[ 2, \ -1, \ -1 \right]/27,\notag\\
    W_4(5,n) & = \left[ 1, \ 1, \ -1, \ -1 \right]/16,\notag\\
    W_5(5,n) & = \left[ 4, \ -1, \ -1, \ -1, \ -1 \right]/25. \notag
\end{align}
This computation was first carried out by Cayley in 1856, \cite[p. 132]{Ca}.
Sylvester called $W_k(N,n)$ the {\em $k$-th wave} and provided the formula
\begin{equation} \label{wave}
    W_k(N,n)=\res_{z=0} \sum_\rho \frac{\rho^n e^{n z}}{(1-\rho^{-1} e^{-z})(1-\rho^{-2} e^{-2z}) \cdots (1-\rho^{-N} e^{-Nz})}
\end{equation}
in \cite{Sy3}. Here $\res_{z=0}$ indicates the coefficient of $1/z$ in the Laurent expansion about $0$, and the sum is over all primitive $k$-th roots of unity $\rho$. For example, Glaisher shows in \cite[Sections 19-30]{Gl} that the first wave is
\begin{equation} \label{w1}
    W_1(N,n) = \frac{(-1)^{N-1}}{ N!} \sum_{j_0+j_1+ \cdots + j_N = N-1}
    \left(-n \right)^{j_0} B_{j_1}   \cdots  B_{j_N} \frac{ 1^{j_1}  \cdots
N^{j_N}}{ j_0 ! j_1 !  \cdots j_N!}
\end{equation}
where the Bernoulli numbers $B_m$ are the constant terms of the Bernoulli polynomials $B_m(t)$, defined with
\begin{equation} \label{bepo}
    \frac{z e^{tz}}{e^z-1}  = \sum_{m=0}^\infty B_m(t) \frac{z^m}{m!}.
\end{equation}
We will take \eqref{wave} as our definition of $W_k(N,n)$ and  prove in Section \ref{sw} that \eqref{sylthm} and  \eqref{wkv} follow.

Clearly the first wave $W_1(5,n)$ will make the largest contribution to $p_5(n)$ for large $n$.
Similarly,  for any fixed $N$ as $n \to \infty$ we have $p_N(n)  \sim W_1(N,n)$, since the first wave has the biggest degree, and the well-known asymptotic\footnote{As usual, the notation $f(z)=O(g(z))$, or equivalently $f(z) \ll g(z)$, means that there exists a $C$ so that $|f(z)|\lqs C\cdot g(z)$ for all $z$ in a specified range. The number $C$ is called the {\em implied constant}.}
\begin{equation*}
  p_N(n) = \frac{n^{N-1}}{(N-1)!N!} + O(n^{N-2})
\end{equation*}
 then follows from \eqref{w1}.

\subsection{Main results} \label{maru}
 A more difficult question is how $W_1(N,n)$ or the first waves $W_1(N,n)+W_2(N,n)+\cdots$ compare with $p_N(n)$  as $N$ and $n$ both go to $\infty$ together. Suppose, to begin with, that $N=n$. We obtain  the unrestricted partitions $p(n)=p_n(n)$, and see for example that
\begin{align}
    22 = p(8) & = W_1(8,8)+ W_2(8,8)+ W_3(8,8)+ \cdots +  W_{8}(8,8) \notag\\
    & \approx  21.4127 + 0.4112 -0.0566 + \cdots + 0.0625\label{lar}.
\end{align}
So $W_1(n,n)$ gives a good approximation to $p(n)$ for $n=8$ and we may ask if $W_1(n,n) \sim p(n)$ for large $n$.

To examine this question we recall the
 formula of Hardy, Ramanujan and Rademacher, given for example in \cite[Eq. (120.10)]{Ra} as
\begin{equation} \label{hrr}
    p(n)=\sum_{k=1}^\infty \frac{A_k(n)}{2\sqrt{3k}(n-1/24)}\left(\cosh\left(\frac{2\pi}{\sqrt{6}k}\sqrt{n-1/24}\right)-
    \frac{\sinh\left( \frac{2\pi}{\sqrt{6}k}\sqrt{n-1/24}\right)}{\frac{2\pi}{\sqrt{6}k}\sqrt{n-1/24}} \right)
\end{equation}
where $A_k(n)$ may be expressed with Selberg's formula \cite[Eq. (123.2)]{Ra} as
\begin{equation*}
    A_k(n) = \sqrt{k/3}\sum_{\substack{0\lqs \ell \lqs 2k-1 \\ (3\ell^2-\ell)/2 \equiv -n \bmod k}} (-1)^\ell \cos\left( \frac{(6\ell-1)\pi}{6k}\right).
\end{equation*}
The $k=1$ term of \eqref{hrr} is already a good approximation to $p(n)$; using Rademacher's bound in \cite[p. 277]{Ra} for the error after truncation yields, since $A_1(n)=1$,
\begin{equation} \label{pnexp}
    p(n) =\frac1{4\sqrt{3}(n-1/24)} \exp\left(\frac{2\pi}{\sqrt{6}}\sqrt{n-1/24}\right)\left(1+O\left(\frac 1{\sqrt{n}}\right)\right).
\end{equation}

The only  comparison between $W_1(n,n)$ and $p(n)$ we have found in the literature is by Szekeres. He
  compared the expansion of $p(n)$ as a sum of Sylvester waves to the expansion \eqref{hrr},
noting in \cite[p. 108]{Sze51} that the corresponding terms seem to match up for small values of $n$. For example, when $n=8$ the first three terms of \eqref{hrr} give
\begin{equation} \label{lar2}
    p(8) \approx 21.7092+0.3463-0.0896+\cdots.
\end{equation}
 He speculated that the correspondence between \eqref{lar} and \eqref{lar2} would improve as $n$ gets larger.

 In fact we show that  the sum of the first  waves $W_1(n,n)+W_2(n,n)+ \cdots$ does not stay close to $p(n)$ for large $n$. The size of these waves is approximately $\exp(0.068 n)/n^2$ whereas $p(n)$ grows like $\exp(2.565 \sqrt{n})/n$ according to \eqref{pnexp}. More precisely, for the first 100 waves $W_1(n,n)+ \cdots +W_{100}(n,n)$, we prove the following.

\begin{theorem} \label{ma1}
There exist explicit constants $w_0$ and $z_0$ in $\C$ so that as $n \to \infty$
\begin{equation}\label{wknn}
   \sum_{k=1}^{100} W_k(n,n)  =\Re\left[(2  z_0 e^{-3\pi i z_0})\frac{w_0^{-n}}{n^2}\right] +O\left( \frac{|w_0|^{-n}}{n^3}\right).
\end{equation}
\end{theorem}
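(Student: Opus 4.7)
The plan is to apply the saddle-point method of Perron to Sylvester's contour-integral formula \eqref{wave} for each wave $W_k(n,n)$ with $1\lqs k \lqs 100$ individually, and then isolate the single dominant term. Starting from \eqref{wave}, express $W_k(n,n)$ as a contour integral about $z=0$. Since $\rho^{-j}$ is $k$-periodic in $j$, group the denominator factors by residue class modulo $k$, rewriting $\prod_{j=1}^n(1-\rho^{-j}e^{-jz})$ as $\prod_{r=1}^k \prod_m (1-\rho^{-r}e^{-(km+r)z})$, a product of $k$ partial $q$-Pochhammer symbols in $q=e^{-kz}$. Taking the logarithm, the integrand acquires the form $\exp(n\,\phi_{k,\rho}(z)+\psi_{k,\rho}(z)+o(1))$, where the leading $n$-linear part $n\,\phi_{k,\rho}$ collects the long-tail contributions and can be summed in closed form via $\li$ (so that $\phi_{k,\rho}$ involves a dilogarithm of $e^{-kz}\rho^{-k}$ or similar), while $\psi_{k,\rho}$ gathers the boundary Euler--Maclaurin corrections and the residual small-$j$ effects of the root-of-unity twists.

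For each $(k,\rho)$, locate the saddle $z_{k,\rho}$ by solving $\phi_{k,\rho}'(z)=0$, a transcendental equation involving the dilogarithm's derivative $\log(1-e^{-kz}\rho^{-k})$. Deform the contour onto the steepest-descent path through $z_{k,\rho}$ and apply Perron's formula to obtain an asymptotic contribution $c_{k,\rho}\,w_{k,\rho}^{-n}/n^2$, where $w_{k,\rho}:=\exp(-\phi_{k,\rho}(z_{k,\rho}))$ and the prefactor $c_{k,\rho}$ is an explicit product built from $z_{k,\rho}$, the value $\psi_{k,\rho}(z_{k,\rho})$, and the saddle-width factor $\phi_{k,\rho}''(z_{k,\rho})^{-1/2}$. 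Pairs of complex conjugate roots $\rho,\bar\rho$ give conjugate contributions, combining into the outer $2\Re[\cdot]$ of \eqref{wknn}. The specific shape $z_0 e^{-3\pi i z_0}$ in the leading prefactor will emerge from evaluating $\psi_{k_0,\rho_0}$ at $z_0$ and simplifying using the explicit Euler--Maclaurin boundary terms.

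Finally, compare $|w_{k,\rho}|$ across the finite set of pairs $(k,\rho)$ with $1\lqs k\lqs 100$. The claim is that exactly one conjugate pair $(\rho_0,\bar\rho_0)$ of primitive $k_0$-th roots of unity minimizes this modulus; set $w_0:=w_{k_0,\rho_0}$ and $z_0:=z_{k_0,\rho_0}$. All other waves, together with the subleading terms in the saddle-point expansion of $W_{k_0}(n,n)$, are absorbed into the error $O(|w_0|^{-n}/n^3)$. The principal obstacle will be the rigorous saddle-point analysis for each wave: the saddle $z_{k,\rho}$ sits close to the imaginary axis where the integrand has clusters of poles at $\rho^{-j}e^{-jz}=1$, so the steepest-descent contour must skirt these poles and one must verify that the deformation picks up no unintended residues. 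A secondary difficulty is uniformly controlling the Euler--Maclaurin error in the passage from the finite sum $\sum_j\log(1-\rho^{-j}e^{-jz})$ to its dilogarithmic asymptotic, so that the error term $O(|w_0|^{-n}/n^3)$ holds with implied constants independent of $k$.
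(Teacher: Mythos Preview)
Your approach is fundamentally different from the paper's, and in fact aims at something the paper explicitly leaves open.

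The paper does \emph{not} analyze any of the waves $W_1,\dots,W_{100}$ directly. Instead it uses the identity $\sum_{k=1}^{N}W_k(N,n)=p_N(n)$ to write
\[
\sum_{k=1}^{100}W_k(N,n)=p_N(n)+\sum_{h/k\in\farey_N\setminus\farey_{100}}Q_{hk(-n)}(N),
\]
so the asymptotics of the first $100$ waves are obtained from the \emph{tail} waves $W_{101},\dots,W_N$. The Farey fractions with $k$ near $N$ are grouped into a set $\mathcal A(N)$ (those with $h\in\{1,k-1\}$ and $N/2<k\lqs N$), and the corresponding sum $\mathcal A_1(N,-n)$ is converted, via Euler--Maclaurin on the sine product $\spn{1/k}{N-k}$, into a single integral to which Perron's saddle-point method applies. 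The constants $w_0,z_0$ arise as the saddle of the function $p(z)=\bigl(\li(1)-\li(e^{2\pi iz})\bigr)/(2\pi iz)$ in the strip $1/2<\Re z<3/2$; they are \emph{not} attached to any root of unity $\rho$ with $k\lqs100$. The remaining Farey fractions are bounded by $O(e^{0.055N})$ (Theorem~\ref{thmb}), and $p_N(n)$ itself is $O(e^{c\sqrt{n}})$, both negligible against $|w_0|^{-N}\approx e^{0.068N}$.

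Your plan, by contrast, tries to run saddle-point on each $W_k(n,n)$ individually for $k\lqs100$ and then pick out the dominant one. If this worked it would in particular give the asymptotics of $W_1(n,n)$ alone, which is precisely Conjecture~\ref{conja} in the paper and is stated there as open. The obstacle you flag --- that the steepest-descent contour for the residue integral must be threaded through the dense array of poles at $\rho^{-j}e^{-jz}=1$ --- is not a technicality to be managed but the crux of the matter: deforming the small circle around $z=0$ outward necessarily sweeps across residues at these poles, and those residues are (after the change of variable in \eqref{cty}) exactly the other $Q_{hk\sigma}(N)$. So a rigorous version of your contour deformation would reproduce the paper's tail decomposition rather than bypass it. Your identification of the dominant term with a specific pair $(k_0,\rho_0)$ of order $\lqs100$ therefore cannot be correct as stated; the exponential rate $|w_0|^{-1}$ genuinely comes from the residues at $h/k$ with $k\sim N$, not from the low-order waves.
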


 The numbers $w_0$ and $z_0$ are coming from the saddle-point method, with $w_0$ the unique solution in $\C$ to $$\li(w)-2\pi i \log(w) = 0,$$
 where $\li(w)$ denotes the dilogarithm function, as described in Section \ref{dise},
and $z_0:= 1+\log(1-w_0)/(2\pi i)$ so that
\begin{equation} \label{w0x}
    w_0=1-e^{2\pi i z_0}, \quad 1/2 < \Re(z_0) < 3/2.
\end{equation}
In \cite{OS3} it is shown that $w_0$ and $z_0$ may be found to any precision and we have
\begin{equation}\label{zeros}
    w_0 \approx 0.916198 - 0.182459 i, \qquad z_0 \approx 1.181475 + 0.255528 i.
\end{equation}
Hence we may equivalently present \eqref{wknn}  as
\begin{equation} \label{wknn2}
    \sum_{k=1}^{100} W_k(n,n) = \frac{ e^{Un}}{n^2}\left(\psi_1 \cdot \sin(\tau_1 + Vn) +O\left( \frac 1{n}\right)\right)
\end{equation}
with
\begin{align} \label{abuv}
       U & :=-\log |w_0| \approx 0.0680762, & V & :=\arg(1/w_0) \approx 0.196576,\\
       \psi_1 & := |2 i  \cdot  z_0 e^{-3\pi i z_0}| \approx 26.8713, &  \tau_1 & := \arg(2 i  \cdot  z_0 e^{-3\pi i z_0}) \approx -3.06816. \label{abuv2}
\end{align}
For large $n$ we see that the first waves trace an oscillating sine wave with amplitude growing exponentially. The period of the oscillation is $2\pi/V \approx 31.96311$ and successive waves increase by a factor of approximately $e^U \approx 1.07045$. The number $w_0$, which is controlling this behaviour, is in fact a zero of the dilogarithm on a non-principal branch and was first identified in \cite{OS15}. The Rademacher coefficients studied in \cite{OS15} have very similar asymptotic properties to Sylvester waves, and indeed they may be expressed in terms of each other - see \cite[Sect. 4]{OS15}.

\begin{table}[ht]
\begin{center}
\begin{tabular}{c | c | c | c | c}
$n$ & $W_1(n,n)$ & $W_2(n,n)$ &  $W_3(n,n)$ &  $W_4(n,n)$\\ \hline
$1000$ & $ 2.41 \times 10^{31}$ & $ 4.09 \times 10^{13}$  & $-3.03\times 10^{7\phantom{0}}$  & $8.14\times 10^{4}$ \\
$1500$ & $ 2.32  \times 10^{39}$ & $ 2.40 \times 10^{17}$  & $\phantom{-}2.49\times 10^{10}$  & $5.52\times 10^{6}$ \\
$2000$ & $ 4.37  \times 10^{53}$ & $ 4.98 \times 10^{23}$  & $-8.22\times 10^{13}$  & $6.98\times 10^{8}$
\end{tabular}
\caption{Relative sizes of the first waves} \label{www}
\end{center}
\end{table}

Theorem \ref{ma1} is  not the best possible result; we expect it to be true with the sum of the first $100$ waves replaced by just the first wave. Numerically, the other waves are much smaller than the first, as shown in Table \ref{www} for example.

Comparing \eqref{wknn2} and \eqref{pnexp}, we see that the  first  waves must eventually become much larger than $p(n)$. In fact $ \psi_1 e^{Un}/n^2$ equals the right side of \eqref{pnexp} for $n \approx 1480$. We see in Table \ref{tbl} that $p(n)$ and $W_1(n,n)$ get further and further apart after this value. On the other hand, for $n \ll 1480$ the first waves are a good approximation to $p(n)$ as we see in Section \ref{good}.

\begin{table}[ht]
\begin{center}
\begin{tabular}{c | c | c | c}
$n$ & $p(n)$ & $W_1(n,n)$ &  $\Re\left[ (2  z_0 e^{-3\pi i z_0})\frac{w_0^{-n}}{n^2} \right]$ \\ \hline
$1200$ & $\mathbf{4.62\times 10^{34}}$ & $\mathbf{\phantom{-}4.62\times 10^{34}}$  & $\phantom{-}1.90\times 10^{30}$  \\
$1300$ & $\mathbf{1.61\times 10^{36}}$ & $\mathbf{\phantom{-}1.61\times 10^{36}}$  & $\phantom{-}3.95\times 10^{33}$  \\
$1400$ & $\mathbf{4.90\times 10^{37}}$ & $\mathbf{\phantom{-}5.21\times 10^{37}}$  & $\phantom{-}3.12\times 10^{36}$  \\
$1500$ & $1.33\times 10^{39}$ & $\phantom{-}2.32\times10^{39}$  & $\phantom{-}9.67\times 10^{38}$  \\
$1600$ & $3.24\times 10^{40}$ & $\mathbf{-8.08\times10^{41}}$ &   $\mathbf{-8.93\times 10^{41}}$  \\
$1700$ & $7.18\times 10^{41}$ & $\mathbf{-1.57\times10^{45}}$  & $\mathbf{-1.60\times 10^{45}}$  \\
$1800$ & $1.46\times 10^{43}$ & $\mathbf{-1.20\times10^{48}}$  & $\mathbf{-1.21\times 10^{48}}$
\end{tabular}
\caption{Comparing  $p(n)$, $W_1(n,n)$ and the asymptotics from Theorem \ref{ma1}} \label{tbl}
\end{center}
\end{table}

Our main result is the following, of which Theorem \ref{ma1} is a special case. Instead of restricting to $n=N$ we allow $n$ in a range between $-\lambda^+N$ and $\lambda^+N$.

\begin{theorem} \label{ma2}
Let $\lambda^+$ be a positive real number. Suppose $N \in \Z_{\gqs 1}$ and  $\lambda N \in \Z$ for $\lambda$ satisfying $|\lambda| \lqs \lambda^+$.
Then there are explicit coefficients $a_{0}(\lambda),$ $a_{1}(\lambda), \dots $ so that
\begin{equation} \label{pres}
   \sum_{k=1}^{100} W_k(N,\lambda N) = \Re\left[\frac{w_0^{-N}}{N^{2}} \left( a_{0}(\lambda)+\frac{a_{1}(\lambda)}{N}+ \dots +\frac{a_{m-1}(\lambda)}{N^{m-1}}\right)\right] + O\left(\frac{|w_0|^{-N}}{N^{m+2}}\right)
\end{equation}
as $N \to \infty$ where  $a_{0}(\lambda)=2  z_0 e^{-\pi i z_0(1+2\lambda)}$ and the implied constant depends only on  $\lambda^+$ and $m \gqs 0$.
\end{theorem}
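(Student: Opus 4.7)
The plan is to apply Perron's saddle-point method directly to the contour-integral form of \eqref{wave}, following the template developed for the closely related Rademacher coefficients in \cite{OS15}. Reinterpreting the residue as
$$W_k(N,\lambda N) = \frac{1}{2\pi i}\sum_\rho \oint_C \frac{\rho^{\lambda N}e^{\lambda N z}}{\prod_{j=1}^N(1-\rho^{-j}e^{-jz})}\,\frac{dz}{z},$$
with $\rho$ ranging over primitive $k$-th roots of unity and $C$ a small circle about $0$, the first step for the dominant $k=1$ wave is to convert the denominator into a form amenable to large-$N$ analysis. Applying Euler--Maclaurin to $\sum_{j=1}^N \log(1-e^{-jz})$ yields an asymptotic expansion headed by $\frac{1}{z}[\li(e^{-Nz})-\li(e^{-z})]$ together with explicit Bernoulli-number corrections. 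After a deformation and rescaling of the contour that concentrates attention near the dominant pole, the integrand takes the standard form $e^{N F(y;\lambda)} G(y;N,\lambda)$ to which Perron's method applies.

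The saddle-point equation $\partial_y F = 0$ should reduce, under the substitution $w = 1-e^{2\pi i z}$, to the transcendental equation $\li(w)-2\pi i\log(w)=0$ characterizing $w_0$, whose analytic meaning (as noted after Theorem \ref{ma1}) is that $w_0$ is a zero of $\li$ on a non-principal branch; the corresponding critical point in the $z$-plane is the $z_0$ of \eqref{w0x}. Deforming $C$ through this saddle as a path of steepest descent and expanding via the standard Perron formula
$$\frac{1}{2\pi i}\oint e^{N F(y)}G(y)\,dy \sim e^{N F(y_0)}\Bigl(\frac{b_0}{\sqrt{N}}+\frac{b_1}{N^{3/2}}+\cdots\Bigr)$$
produces the required form. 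The exponential factor gives $e^{NF(y_0)}=w_0^{-N}$; the Jacobian from the rescaling supplies an additional power of $N$ that produces the overall $N^{-2}$ in \eqref{pres}; the explicit numerator factor $e^{\lambda N z}$ evaluated at the saddle supplies the $\lambda$-dependence; and assembling the remaining constants yields $a_0(\lambda)=2z_0 e^{-\pi i z_0(1+2\lambda)}$. Further coefficients $a_1(\lambda),\ldots,a_{m-1}(\lambda)$ follow by carrying the Taylor expansion of $F$ and $G$ at $y_0$ out to order $2m$.

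For the subdominant waves $W_k$ with $2\leq k\leq 100$, the analogous analysis, now with products involving $\rho^{-j}e^{-jz}$, yields critical points $w_k^{(\rho)}$ whose moduli I expect to satisfy $|w_k^{(\rho)}|>|w_0|$ strictly, so that each such wave contributes only to the error $O(|w_0|^{-N}N^{-m-2})$. The main technical obstacle I anticipate is establishing the Euler--Maclaurin expansion for $\sum_{j=1}^N\log(1-\rho^{-j}e^{-jz})$ to arbitrary order with remainders controlled uniformly in $z$ along the steepest-descent contour and uniformly in $\lambda$ with $|\lambda|\leq\lambda^+$; controlling the $\lambda$-dependence of both the saddle location and the local quadratic data requires care, especially since $\lambda$ is permitted to be negative or zero. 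A secondary obstacle is verifying, for each of the finitely many subdominant saddle configurations $(k,\rho)$, the strict separation $|w_k^{(\rho)}|>|w_0|$; this rests on the location analysis of non-principal-branch zeros of the dilogarithm carried out in \cite{OS3,OS15}, applied here to a finite collection of shifted dilogarithm equations.
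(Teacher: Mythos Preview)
Your approach is genuinely different from the paper's, and the gap is precisely in the step you flag as a ``secondary obstacle.''

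The paper never analyzes any wave $W_k$ directly. Instead it uses the global residue identity
\[
\sum_{k=1}^{100} W_k(N,\lambda N)=p_N(\lambda N) +\mathcal A_1(N,-\lambda N) + \sum_{h/k \in \farey_N- (\farey_{100} \cup \mathcal A(N))}  Q_{hk(-\lambda N)}(N),
\]
obtained by grouping all residues of $Q(z;N,-\lambda N)$ over the Farey fractions $\farey_N$. The dominant piece $\mathcal A_1$ is a sum over \emph{simple} poles at $h/k$ with $N/2<k\le N$ and $h\in\{1,k-1\}$; these residues are explicit sine products $\spr{1/k}{N-k}$. Euler--Maclaurin is then applied to a sum over the integer $k$ (not to a contour integral), converting $\mathcal A_1$ into an integral $\int_{1.01}^{1.49} e^{-Np(z)}\cdots\,dz$ to which the saddle-point method is applied at $z_0$. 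The remaining Farey sums and $p_N(\lambda N)$ are shown to be $O(e^{0.055N})$, smaller than $|w_0|^{-N}\approx e^{0.068N}$.

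Your plan attacks the order-$N$ pole at $z=0$ head-on for $W_1$, and then must separately show that each of $W_2,\ldots,W_{100}$ has a strictly subdominant saddle. But establishing the asymptotic of $W_1(N,\lambda N)$ alone is exactly the paper's Conjecture~9.1, and the expected asymptotic for $W_2$ is Conjecture~9.2; both are left open. The strict inequality $|w_k^{(\rho)}|>|w_0|$ you need for each $(k,\rho)$ with $2\le k\le 100$ is not available from the dilogarithm-zero analysis cited: that work locates zeros of $\li(w)+4\pi^2 A+2\pi i B\log w$, whereas the products $\prod_j(1-\rho^{-j}e^{-jz})$ for primitive $k$th roots $\rho$ lead, after Euler--Maclaurin, to shifted variants that are not covered. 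So your ``secondary obstacle'' is in fact the crux, and the paper's device of trading the sum of the first $100$ waves for a sum over large-denominator simple poles is precisely what circumvents it. (Minor point: in your contour formula the factor $dz/z$ should be $dz$, since the residue at $0$ is already $\frac{1}{2\pi i}\oint f(z)\,dz$.)
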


The formula for the next coefficient, $a_{1}(\lambda)$, is given in Proposition \ref{propb1s}. These first waves show the same basic oscillating behavior as the $\lambda=1$ case, with period  $2\pi/V \approx 31.96311$ and  increasing by a factor of approximately $e^U \approx 1.07045$ with each $N$:
\begin{equation} \label{presxx}
   \sum_{k=1}^{100} W_k(N,\lambda N) = \frac{e^{UN}}{N^2}\psi_\lambda \cdot \sin\Bigl(\tau_\lambda +VN\Bigr)+ O\left(\frac{e^{UN}}{N^{3}}\right)
\end{equation}
where, consistent with \eqref{abuv2},
\begin{equation} \label{frbr}
    \psi_\lambda:=2|z_0|e^{\pi \Im(z_0)(1+2\lambda)}, \qquad \tau_\lambda:=\arg(i z_0)-\pi \Re(z_0)(1+2\lambda).
\end{equation}
As before, we expect Theorem \ref{ma2} to be true with the left side of \eqref{pres} replaced by just the first wave $W_1(N,\lambda N)$. Instances of  Theorem \ref{ma2}, comparing the right side of \eqref{pres} to the first wave, are displayed in Table \ref{tgz}.

\begin{table}[ht]
{\footnotesize
\begin{center}
\begin{tabular}{cc|cccc|c}
$N$ & $\lambda$ & $m=1$ & $m=2$ & $m=3$ &  $m=5$ & $W_1(N, \lambda N)$  \\ \hline
$3300$ & $1/3$ & $-8.35526\times 10^{90}$ &  $-8.22612 \times 10^{90}$ & $-8.22662 \times 10^{90}$ &   $-8.22663 \times 10^{90}$ &
$-8.22663 \times 10^{90}$\\
$3300$ & $1$ & $-9.05354\times 10^{91}$ &  $-8.97235 \times 10^{91}$ & $-8.97192 \times 10^{91}$ &   $-8.97194 \times 10^{91}$ &
$-8.97194 \times 10^{91}$ \\
$3300$ & $2$ & $-2.02861\times 10^{92}$ &  $-1.88676 \times 10^{92}$ & $-1.89108 \times 10^{92}$ &   $-1.89104 \times 10^{92}$ &
$-1.89104 \times 10^{92}$
\end{tabular}
\caption{The approximations of Theorem \ref{ma2}  to $W_1(N, \lambda N)$.} \label{tgz}
\end{center}}
\end{table}

Taking $m=1$ and $\lambda =r/N$ in Theorem \ref{ma2} easily gives the following corollary.

\begin{cor} \label{cma2}
Fix $r \in \Z$.
Then
\begin{equation} \label{tg45}
   \sum_{k=1}^{100} W_k(N,r) = \Re\left[(2  z_0 e^{-\pi i z_0}) \frac{w_0^{-N}}{N^{2}} \right] + O\left(\frac{|w_0|^{-N}}{N^{3}}\right)
\end{equation}
as $N \to \infty$  for an implied constant depending only on $r$.
\end{cor}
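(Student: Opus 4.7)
The plan is to derive Corollary \ref{cma2} directly from Theorem \ref{ma2} by specializing $\lambda = r/N$ and $m=1$, as the preceding remark suggests. The key observation is that $a_0(\lambda)=2z_0 e^{-\pi i z_0(1+2\lambda)}$ is given in closed form as an entire function of $\lambda$, so replacing $a_0(r/N)$ by $a_0(0)=2z_0 e^{-\pi i z_0}$ costs only a factor of $1+O(1/N)$, which is absorbed cleanly into the $O(|w_0|^{-N}/N^{3})$ remainder already present in Theorem \ref{ma2}.

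To make this precise, I would first fix any positive $\lambda^+$, say $\lambda^+ = 1$, so that once $N \gqs |r|$ we have $|\lambda| = |r|/N \lqs \lambda^+$, and moreover $\lambda N = r \in \Z$ satisfies the integrality hypothesis of Theorem \ref{ma2}. Applying that theorem with this choice of $\lambda$ and $m = 1$ yields
\[
   \sum_{k=1}^{100} W_k(N,r) = \Re\left[\frac{w_0^{-N}}{N^{2}}\, a_0(r/N)\right] + O\left(\frac{|w_0|^{-N}}{N^{3}}\right),
\]
with the implied constant depending only on $\lambda^+$, hence absolute.

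Next I would expand $a_0(\lambda) = (2 z_0 e^{-\pi i z_0})\cdot e^{-2\pi i z_0 \lambda}$ about $\lambda = 0$. Since $e^{-2\pi i z_0 \lambda} = 1 + O(\lambda)$ uniformly for $|\lambda| \lqs 1$, with implied constant depending only on $z_0$, substituting $\lambda = r/N$ gives $a_0(r/N) = 2 z_0 e^{-\pi i z_0} + O(|r|/N)$. Multiplying by $w_0^{-N}/N^{2}$ and taking real parts contributes a further error of size $O(|r|\cdot|w_0|^{-N}/N^{3})$, which merges with the existing remainder. Since $r$ is fixed, the combined implied constant depends only on $r$, and \eqref{tg45} follows. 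The only point requiring care is bookkeeping of which constants are absolute versus $r$-dependent; there is no substantive obstacle, as $\lambda^+$ can be chosen independently of $r$ and the $r$-dependence enters solely through the Taylor remainder for $a_0$.
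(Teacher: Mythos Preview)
Your proposal is correct and follows exactly the approach indicated in the paper, which simply notes that taking $m=1$ and $\lambda=r/N$ in Theorem \ref{ma2} easily gives the corollary. Your careful tracking of the Taylor expansion of $a_0(\lambda)$ about $\lambda=0$ and the dependence of constants on $r$ fills in precisely the details the paper leaves implicit.
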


Thus we see that even in the wave expansion of $P_N(1)=1$,  corresponding to $r=1$ in \eqref{tg45}, the first waves are becoming exponentially large with $N$.
So in general we expect some of the individual waves  on the right of \eqref{sylthm} to be much larger than $p_N(n)$, indicating  a lot of cancelation on the right side.

A situation where the first waves are the same size as $p_N(n)$ is given next.

\begin{theorem} \label{ma3}
Let $\lambda$ be  in the range $0.2 \lqs \lambda \lqs 2.9$. Then for  positive integers $N$ and $\lambda N^2$ we have
\begin{equation*}
   p_N\bigl(\lambda N^2\bigr) = \left(\sum_{k=1}^{100} W_k\bigl(N,\lambda N^2\bigr)\right)\Bigl(1  + O\left(e^{-0.1 N}\right)\Bigr)
\end{equation*}
as $N \to \infty$ where the implied constant is absolute.
\end{theorem}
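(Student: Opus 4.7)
The plan is to use Sylvester's identity \eqref{sylthm} to recast the theorem as a tail estimate: since $p_N(\lambda N^2) = \sum_{k=1}^N W_k(N, \lambda N^2)$, the claim is equivalent to
\[
\left|\sum_{k=101}^N W_k(N, \lambda N^2)\right| \leq C\, e^{-0.1 N}\, p_N(\lambda N^2),
\]
so I need (i) an upper bound on each tail wave $W_k$ with $k \geq 101$, and (ii) a lower bound on $p_N(\lambda N^2)$ comparable to the leading behavior of $W_1$.

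For (i), I would evaluate the residue in \eqref{wave} explicitly. With $M = \lfloor N/k\rfloor$ and $r = N-kM$, splitting the factors $(1-\rho^{-j} e^{-jz})^{-1}$ according to whether $k \mid j$ shows that the pole at $z=0$ has order $M$ with principal coefficient $1/\bigl(k^{2M}\, M!\, \prod_{m=1}^{r}(1-\rho^{-m})\bigr)$, the factor $k^{2M}$ arising by combining the $k^M$ from the $j = k\ell$ factors with another $k^M$ from the remaining factors via the classical identity $\prod_{m=1}^{k-1}(1-\rho^{-m}) = k$ for primitive $\rho$. After extracting the residue, summing over the $\phi(k)$ primitive $k$-th roots, and applying Stirling to $M!(M-1)! \sim 2\pi M^{2M} e^{-2M}$ with $M \approx N/k$, the leading term in $n = \lambda N^2$ gives
\[
|W_k(N, \lambda N^2)| \leq \frac{(\lambda e^2)^{N/k}}{N^2}\cdot Q(k,N),
\]
with $Q(k,N)$ polynomial in both variables; lower-order terms in the $n$-polynomial $W_k$ contribute smaller powers of $\lambda N^2$ and are absorbed into $Q$. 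For (ii), Glaisher's formula \eqref{w1} gives $W_1(N, \lambda N^2) \sim (\lambda e^2)^N/(2\pi\lambda^2 N^2)$ by Stirling, and the bound from (i) applied at $k = 2, 3, \ldots$ shows $|W_k|/W_1 = O\bigl(\mathrm{poly}(N)(\lambda e^2)^{-N(1-1/k)}\bigr)$ is exponentially small for every $k \geq 2$ since $\lambda e^2 \geq 0.2\,e^2 > 1$ throughout our range. Hence $p_N(\lambda N^2) \geq \tfrac12 W_1(N, \lambda N^2)$ for $N$ large.

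Combining (i) and (ii), the ratio satisfies $|W_k|/p_N \leq \mathrm{poly}(N)\cdot(\lambda e^2)^{-N(1-1/k)}$. Since $(1-1/k)\log(\lambda e^2)$ is increasing in $k$, the worst case over $k \geq 101$ and $\lambda \in [0.2, 2.9]$ is achieved at $k = 101$, $\lambda = 0.2$, giving an exponent $\approx -0.387\, N$, comfortably inside the required $-0.1\, N$; summing over the at most $N-100$ tail waves contributes only a linear factor and leaves the bound intact. The main technical obstacle will be making the factor $Q(k,N)$ uniform in $k$: the crux is the lower bound on $|\prod_{m=1}^r(1-\rho^{-m})|$, which via $|1-\rho^{-m}| \geq 4m/(\pi k)$ can blow up as $(\pi k/4)^r/r!$ in the numerator, but this stays polynomial in $k\leq N$ and is safely absorbed by the $0.287$ margin between $-0.387$ and $-0.1$ in the exponent.
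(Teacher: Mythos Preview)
Your reduction to a tail estimate is correct, but both steps (i) and (ii) have genuine gaps that prevent the argument from closing.

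For (i), the claim that $Q(k,N)$ is polynomial in $k$ and $N$ is false, and your own crude estimate already shows why. Take $k$ just above $N/2$, so $M=1$ and $r=N-k$ is close to $N/2$. Then your bound $(\pi k/4)^r/r!$ with $k\approx N/2$ and $r\approx N/2$ is, by Stirling, of order $(\pi e/4)^{N/2}\approx e^{0.38N}$. This is exponential, not polynomial, and it consumes the entire $0.287$ margin you computed at the end. Moreover, the inequality $|1-\rho^{-m}|\gqs 4m/(\pi k)$ is only meaningful for $h=1$; for a general primitive root $\rho=e^{2\pi i h/k}$ the residues $mh \bmod k$ are scattered and no such monotone lower bound holds. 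Controlling $\bigl|\prod_{m=1}^{r}(1-\rho^{-m})\bigr|^{-1}$ uniformly over all $h$ is exactly the hard part; the paper handles it not wave-by-wave but by partitioning the Farey fractions into sets $\mathcal A(N),\mathcal C(N),\mathcal D(N),\mathcal E(N)$ (certain $h/k$ with $k>N/3$) and $\mathcal B(101,N)$ (the rest), with separate and rather delicate estimates for each piece.

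For (ii), ``Glaisher's formula \eqref{w1} plus Stirling'' does \emph{not} give $W_1(N,\lambda N^2)\sim(\lambda e^2)^N/(2\pi\lambda N^2)$ without further work. The leading monomial of the polynomial $W_1(N,\cdot)$ is $n^{N-1}/((N-1)!N!)$, but already the next term (coming from a single $B_1$ factor in \eqref{w1}) has ratio $\approx N(N-1)(N+1)/(4n)=N/(4\lambda)+O(1)$ to the leading one when $n=\lambda N^2$, so the leading monomial does not dominate. You therefore cannot read off the size of $W_1$ this way, and the lower bound $p_N\gqs\tfrac12 W_1$ is left unjustified. The paper sidesteps this entirely by using the elementary combinatorial inequality $p_N(n)\gqs \frac{1}{N!}\binom{N+n-1}{N-1}$ and Stirling to get $p_N(\lambda N^2)\gg (\lambda e^2)^N/(\lambda N^2)$ directly.

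The paper's proof then exploits one further idea you are missing: for the large-$k$ pieces $\mathcal A,\mathcal C,\mathcal D,\mathcal E$, the summands are evaluated on the \emph{real} line, where the $n$-dependent factor $e^{2\pi i n/k}$ has modulus $1$. This makes the bound on those pieces $O(e^{0.15N})$ \emph{independent of} $\lambda$, which is essential since $n=\lambda N^2$ is so large. Only the bulk piece $\mathcal B(101,N)$ picks up a factor $e^{|\lambda|N}$, and the numerical check $\max\{0.15,\,|\lambda|+0.055\}<2+\log\lambda-0.1$ for $0.2\lqs\lambda\lqs 2.9$ finishes the proof.
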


Theorem \ref{ma3} is easily shown using the methods from the proof of Theorem \ref{ma2}. However, a stronger result that replaces the $100$ waves by the first wave and increases the range of $\lambda$ should be possible. Indeed the work in \cite{Sze51} suggests that the correct range for $\lambda$ should be much larger.

\section{Sylvester waves} \label{sw}
\subsection{Basic properties} \label{swq}
As in \cite[Sect. 2.1]{OS1} we define
\begin{equation}\label{qzns}
Q(z;N,\sigma):=\frac{  e^{2\pi i \sigma z}}{(1-e^{2\pi i 1 z})(1-e^{2\pi i2 z}) \cdots (1-e^{2\pi i N z})}
\end{equation}
and it is easy to show that for any $\sigma \in \C$
\begin{align}
Q(z+1;N,\sigma) & = e^{2\pi i \sigma} Q(z;N,\sigma), \label{q1}\\
Q(-z;N,\sigma) & = \overline{Q(\overline{z};N,\overline{\sigma})}, \label{q1b}\\
Q(-z;N,\sigma) & = (-1)^N Q(z;N,N(N+1)/2-\sigma). \label{q2}
\end{align}
Our  goal is to express the Sylvester waves as residues of $Q(z;N,\sigma)$.
As a function of $z$, $Q(z;N,\sigma)$ is meromorphic with all poles contained in $\Q$. More precisely, the set of poles of $Q(z;N,\sigma)$ in $[0,1)$ equals $\farey_N$, the Farey fractions of order $N$ in $[0,1)$.
We write
\begin{equation} \label{defqn}
    Q_{hk\sigma}(N):=2\pi i \res_{z=h/k} Q(z;N,\sigma)
\end{equation}
for $h/k \in \farey_N$. When dealing with residues the next simple relations for $c \in \C$ are useful:
\begin{equation} \label{sres}
    \res_{z=0}f(z)=c \res_{z=0}f(cz), \quad \res_{z=0}f(z+c)= \res_{z=c}f(z), \quad \overline{\res_{z=c}f(z)}= \res_{z=\overline{c}}\overline{f(\overline{z})}.
\end{equation}
With \eqref{q1}, \eqref{q1b} and \eqref{sres} we obtain
\begin{align}
Q_{01\sigma}(N) & = \overline{Q_{01\overline{\sigma}}(N)}, \label{01s}\\
    Q_{(k-h)k\sigma}(N) & =e^{2\pi i \sigma}\overline{Q_{hk\overline{\sigma}}(N)} \qquad (k\gqs 2). \label{defqnx}
\end{align}
A simple exercise with \eqref{sres} also shows, for $h/k \in \farey_N$, $\rho=e^{2\pi i h/k}$ and $\sigma \in \C$, that
\begin{equation} \label{cty}
    \res_{z=0} \frac{e^{2\pi i \sigma h/k} e^{\sigma z}}{(1-\rho^{-1} e^{-z})(1-\rho^{-2} e^{-2z}) \cdots (1-\rho^{-N} e^{-Nz})} = -\overline{Q_{hk(-\sigma)}(N)}.
\end{equation}
From  \eqref{01s}, \eqref{defqnx} and \eqref{cty} we find:
\begin{prop} \label{rain}
For all $k,$ $N \in \Z_{\gqs 1}$ and all $n\in\Z$,
\begin{equation}\label{wct}
    W_k(N,n) = - \sum_{0\lqs h<k, \ (h,k)=1} Q_{hk(-n)}(N).
\end{equation}
\end{prop}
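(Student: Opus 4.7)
The plan is to unfold the definition \eqref{wave} of $W_k(N,n)$ term by term and match each summand to a residue of $Q(z;N,\sigma)$ via the auxiliary identity \eqref{cty}.

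First, I would parametrize the primitive $k$-th roots of unity as $\rho = e^{2\pi i h/k}$ with $0 \lqs h < k$ and $(h,k) = 1$. For each such $\rho$, the summand in \eqref{wave} has the form
\begin{equation*}
\frac{\rho^n e^{nz}}{(1-\rho^{-1}e^{-z})\cdots(1-\rho^{-N}e^{-Nz})} = \frac{e^{2\pi i n h/k} e^{nz}}{(1-\rho^{-1}e^{-z})\cdots(1-\rho^{-N}e^{-Nz})},
\end{equation*}
which is exactly the integrand in \eqref{cty} with the choice $\sigma = n$. Applying \eqref{cty} term by term therefore yields
\begin{equation*}
W_k(N,n) \;=\; -\sum_{\substack{0\lqs h<k\\(h,k)=1}} \overline{Q_{hk(-n)}(N)}.
\end{equation*}

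The remaining task is to remove the complex conjugation. For $k \gqs 2$, I would reindex $h \mapsto k-h$, which permutes the reduced residues mod $k$, and then invoke the symmetry \eqref{defqnx} with $\sigma = -n$. Since $n$ is an integer, $e^{2\pi i(-n)} = 1$ and $\overline{-n} = -n$, so \eqref{defqnx} collapses to $Q_{(k-h)k(-n)}(N) = \overline{Q_{hk(-n)}(N)}$. Consequently
\begin{equation*}
\sum_{\substack{0\lqs h<k\\(h,k)=1}} \overline{Q_{hk(-n)}(N)} \;=\; \sum_{\substack{0\lqs h<k\\(h,k)=1}} \overline{Q_{(k-h)k(-n)}(N)} \;=\; \sum_{\substack{0\lqs h<k\\(h,k)=1}} Q_{hk(-n)}(N),
\end{equation*}
giving the required identity. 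For $k=1$, the sum reduces to the single term $Q_{01(-n)}(N)$, and \eqref{01s} (with $\overline{-n} = -n$) shows directly that this residue is real, so the conjugate can be dropped.

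There is no real obstacle here — the argument is a bookkeeping exercise. The only point requiring care is to check that the two symmetries in play, namely the identification \eqref{cty} of the residue at $z=0$ with a residue of $Q$ at $z = h/k$, and the reflection \eqref{defqnx}, are applied with the correct value of $\sigma$ (here $\sigma = -n$ on one side and $\sigma = n$ on the other because of the $e^{-z}$ in the denominators of \eqref{wave}) so that the conjugations cancel rather than compound. Once that is set up, the integrality of $n$ does all the work.
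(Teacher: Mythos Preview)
Your proof is correct and follows exactly the route indicated in the paper: apply \eqref{cty} with $\sigma=n$ to obtain $W_k(N,n)=-\sum_h \overline{Q_{hk(-n)}(N)}$, then remove the conjugation via \eqref{defqnx} for $k\gqs 2$ and \eqref{01s} for $k=1$, using that $n\in\Z$. There is nothing to add.
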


As shown in \cite[Thm. 2.1]{OS1}, we may relate the right side of \eqref{wct} above to the restricted partitions. Briefly,
the generating function
\begin{equation*}
  \sum_{n=0}^\infty p_N(n) q^n =\prod_{j=1}^N \frac{1}{1-q^j}
\end{equation*}
with $q$ replaced by $e^{2\pi i z}$ becomes $Q(z;N,0)$. We therefore have
\begin{equation}\label{wpn}
    \int_w^{w+1} Q(z;N,-n) \, dz = \begin{cases}
p_N(n) & \text{ \ if \ } \quad n \in \Z_{\gqs 0},\\
0 & \text{ \ if \ } \quad n \in \Z_{<0}
\end{cases}
\end{equation}
for any $w\in \C$ with $\Im (w)$ large enough. Integrating $Q(z;N,-n)$ around the rectangle with corners $w+1$, $w$, $\overline{w}$ and $\overline{w+1}$, using \eqref{q1}, \eqref{q2} and Cauchy's residue theorem proves:
\begin{theorem} \label{qn} For $N \in \Z_{\gqs 1}$ and $n \in \Z$ we have
\begin{equation} \label{th1}
 -\sum_{h/k \in \farey_N} Q_{hk(-n)}(N) =  \begin{cases}
p_N(n)
& \text{ \ if \ } \quad 0 \lqs n  \\
0
& \text{ \ if \ } \quad -N(N+1)/2<n<0 \\
(-1)^{N+1} p_N \bigl(-n-N(N+1)/2 \bigr)
& \text{ \ if \ } \quad n \lqs -N(N+1)/2.
\end{cases}
\end{equation}
\end{theorem}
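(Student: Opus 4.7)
The plan is to make rigorous the integration-around-a-rectangle sketch given right after the statement. Choose $w \in \C$ with $\Im(w)$ large enough that \eqref{wpn} applies and with $\Re(w) \in (-1,0)$ selected so that no pole of $Q(z;N,-n)$ lies on either vertical line $\Re(z)=\Re(w)$ or $\Re(z)=\Re(w)+1$; since poles occur only at rationals of denominator at most $N$, any irrational $\Re(w) \in (-1,0)$ suffices. The rectangle with corners $w,\ \overline w,\ \overline{w+1},\ w+1$ then encloses exactly the poles of $Q(z;N,-n)$ at the Farey fractions $h/k \in \farey_N$, and traversing it counterclockwise the residue theorem together with \eqref{defqn} gives that the contour integral equals $\sum_{h/k \in \farey_N} Q_{hk(-n)}(N)$.

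The four sides are handled separately. The top edge, traversed from $w+1$ to $w$, contributes $-\int_w^{w+1} Q(z;N,-n)\,dz$, which by \eqref{wpn} is $-p_N(n)$ when $n \gqs 0$ and $0$ otherwise. Because $-n \in \Z$, the quasi-periodicity \eqref{q1} reduces to $Q(z+1;N,-n) = Q(z;N,-n)$, and the substitution $z = u+1$ on the right edge shows its contribution is the negative of the left edge, so the two vertical sides cancel exactly.

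The bottom edge, traversed from $\overline w$ to $\overline{w+1}$, is handled by substituting $z = -u$ and applying the reflection \eqref{q2}, which turns its contribution into
\begin{equation*}
  (-1)^N \int_{-\overline w - 1}^{-\overline w} Q\bigl(u;\, N,\, N(N+1)/2 + n\bigr)\,du.
\end{equation*}
Since $\Im(-\overline w) = \Im(w)$ is still large, writing the new parameter as $-\tilde n$ with $\tilde n := -n - N(N+1)/2 \in \Z$ and applying \eqref{wpn} a second time gives $(-1)^N p_N(\tilde n)$ when $\tilde n \gqs 0$ (equivalently $n \lqs -N(N+1)/2$) and $0$ otherwise.

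Equating $\sum_{h/k \in \farey_N} Q_{hk(-n)}(N)$ with the sum of the four side contributions and multiplying by $-1$ yields the three cases of \eqref{th1}: $p_N(n)$ for $n \gqs 0$, identically $0$ for $-N(N+1)/2 < n < 0$, and $-(-1)^N p_N(-n-N(N+1)/2) = (-1)^{N+1}p_N(-n-N(N+1)/2)$ for $n \lqs -N(N+1)/2$. The only genuine obstacle is the sign bookkeeping: one must check that the counterclockwise orientation (which flips the sign of the top integral), the $-du$ from the substitution $z=-u$, and the $(-1)^N$ coming from \eqref{q2} conspire to produce exactly the stated $(-1)^{N+1}$, and also that the two middle cases of the conditional independently contribute zero.
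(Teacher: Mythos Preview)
Your argument is correct and follows exactly the rectangle-contour approach that the paper sketches, using \eqref{q1} to cancel the vertical sides and \eqref{q2} to convert the bottom edge into a second instance of \eqref{wpn}. One small wording issue: with an arbitrary irrational $\Re(w)\in(-1,0)$ the enclosed poles are the rationals of denominator $\lqs N$ in $(\Re(w),\Re(w)+1)$, not literally $\farey_N\subset[0,1)$; but since $-n\in\Z$ makes $Q$ exactly $1$-periodic, the residue sum over any such unit interval equals the sum over $\farey_N$, so the conclusion is unaffected (alternatively, take $\Re(w)\in(-1/N,0)$ irrational to enclose $\farey_N$ on the nose).
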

With Proposition \ref{rain} and Theorem \ref{qn} we have shown that $p_N(n)=\sum_{k=1}^N W_k(N,n)$, (i.e. equality \eqref{sylthm}),  follows from the wave definition \eqref{wave}. This result is known as Sylvester's Theorem - see for example \cite[Sect.4]{OS15} and its contained references.
The next result,  showing that \eqref{wkv} is also a consequence of \eqref{wave}, follows for example from \cite[Sections 4-7]{Gl} which uses partial fractions. We give a new proof.

\begin{prop} \label{well}
For each wave $W_k(N,n)$, equation \eqref{wkv} is valid for polynomials $w_{k,m}(N,x) \in \Q[x]$ that have degree at most $\lfloor N/k \rfloor-1$. They satisfy
\begin{equation} \label{pdf}
    w_{k,0}(N,x)+ w_{k,1}(N,x)+\cdots + w_{k,k-1}(N,x)=0 \qquad \quad (k \gqs 2)
\end{equation}
and more generally, with any factorization $k=bc$ and $0\lqs \ell \lqs b-1$,
\begin{equation} \label{pdf2}
    w_{k,\ell}(N,x)+ w_{k,\ell+b}(N,x)+w_{k,\ell+2b}(N,x)+\cdots + w_{k,\ell+(c-1)b}(N,x)=0 \qquad \quad (c \gqs 2).
\end{equation}
\end{prop}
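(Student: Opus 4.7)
My plan is to prove both the polynomial structure and the identities by computing $W_k(N,n)$ explicitly through Proposition \ref{rain} and analyzing the residues that arise. By \eqref{wct} it suffices to study $Q_{hk(-n)}(N) = 2\pi i\, \res_{z=h/k} Q(z; N, -n)$. I would first localize at $z = h/k$ by substituting $z = h/k + u/(2\pi i)$, which converts the residue at $z = h/k$ into a residue at $u = 0$ and absorbs the factor $2\pi i$ cleanly. The denominator factors of \eqref{qzns} then split into two groups: those with $k \mid j$ (write $j = mk$ with $1 \leq m \leq M := \lfloor N/k \rfloor$) become $1 - e^{mku}$ and collectively produce a pole of order $M$ at $u=0$, while those with $k \nmid j$ become $1 - \rho^j e^{ju}$ (with $\rho = e^{2\pi i h/k}$) and are analytic and nonzero at $u = 0$. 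The numerator factors as $e^{2\pi i \sigma h/k} \cdot e^{\sigma u}$, giving
\[
Q_{hk\sigma}(N) = e^{2\pi i \sigma h/k} \cdot R_{h,k,N}(\sigma),
\]
where $R_{h,k,N}(\sigma)$ is the residue at $u = 0$ of the remaining expression.

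Expanding $e^{\sigma u} = \sum_{r \geq 0} \sigma^r u^r/r!$ and observing that only terms with $r \leq M - 1$ can contribute a $u^{-1}$ term, one sees that $R_{h,k,N}(\sigma)$ is a polynomial in $\sigma$ of degree at most $\lfloor N/k \rfloor - 1$. Its coefficients lie in the cyclotomic field $\Q(\zeta_k)$: the factors $(1 - e^{mku})^{-1}$ have Laurent expansions in $u$ with rational coefficients (via \eqref{bepo} applied to $z/(e^z - 1)$), and the factors $(1 - \rho^j e^{ju})^{-1}$ have Taylor expansions at $u = 0$ with coefficients in $\Q(\zeta_k)$. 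Setting $\sigma = -n$, the exponential $e^{-2\pi i n h/k}$ depends only on $n \bmod k$, so for $0 \leq j \leq k-1$ I define
\[
w_{k,j}(N, x) := -\sum_{\substack{0 \leq h < k \\ (h,k)=1}} e^{-2\pi i j h/k}\, R_{h,k,N}(-x),
\]
which satisfies $W_k(N,n) = w_{k,j}(N,n)$ whenever $n \equiv j \bmod k$, establishing \eqref{wkv} with the claimed degree bound. To prove $w_{k,j} \in \Q[x]$, apply any $\sigma_a \in \mathrm{Gal}(\Q(\zeta_k)/\Q)$: the action $\zeta_k \mapsto \zeta_k^a$ sends $\rho = \zeta_k^h$ to $\zeta_k^{ah}$, so that $\sigma_a(R_{h,k,N}) = R_{ah \bmod k,\, k,\, N}$ and $\sigma_a(e^{-2\pi i j h/k}) = e^{-2\pi i j (ah)/k}$; re-indexing $h \mapsto ah \bmod k$ permutes the coprime residues and leaves the sum invariant.

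Finally, for the identity \eqref{pdf2} with $k = bc$ and $c \geq 2$, I exchange the order of summation:
\[
\sum_{j=0}^{c-1} w_{k,\ell+jb}(N, x) = -\sum_{h} R_{h,k,N}(-x)\, e^{-2\pi i \ell h/k} \sum_{j=0}^{c-1} e^{-2\pi i j h/c},
\]
using $bh/k = h/c$. Since $(h,k) = 1$ forces $(h,c) = 1$ and $c \geq 2$ makes $e^{-2\pi i h/c} \neq 1$, the inner geometric sum vanishes, giving \eqref{pdf2}; \eqref{pdf} is the case $b = 1,\, c = k$. The most delicate step will be the rationality argument: the substitution must be chosen so that the Laurent expansion of the ``pole part'' carries no hidden powers of $2\pi i$ and the Taylor expansion of the ``regular part'' lands squarely in $\Q(\zeta_k)$, so that the Galois group can be applied without transcendental obstructions.
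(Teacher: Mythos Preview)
Your proof is correct and follows essentially the same strategy as the paper's: compute the residue, observe the pole order gives the degree bound $\lfloor N/k\rfloor-1$, invoke Galois invariance of the sum over primitive $k$th roots for rationality, and use the vanishing geometric sum for \eqref{pdf2}. The only cosmetic difference is that the paper works directly from \eqref{wave} and packages the Laurent and Taylor coefficients via Apostol's numbers $\beta_m(\rho)$ (yielding the explicit formula \eqref{psc2}), whereas you route through Proposition~\ref{rain} and an ad hoc substitution $z=h/k+u/(2\pi i)$; the underlying computations coincide.
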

\begin{proof}
Following Apostol in \cite[Eq. (3.1)]{Ap},  write
\begin{equation}\label{apb}
\frac{z}{\rho e^z - 1} = \sum_{m=0}^\infty \beta_m(\rho) \frac{z^m}{m!} \qquad (\rho \in \C).
\end{equation}
Then  \eqref{wave} may be expressed  as
\begin{multline} \label{psc}
    W_k(N,n)  = \frac{(-1)^{N-1}}{N!} \sum_\rho \rho^{-n} \left[ \text{coeff. of }z^{N-1} \right]e^{-n z}
\left(\frac{z}{\rho e^z - 1}\right)\left(\frac{2z
}{\rho^2 e^{2z} - 1}\right) \cdots \left(\frac{N z}{\rho^N e^{Nz} - 1}\right)\\
 = \frac{(-1)^{N-1}}{N!} \sum_{j=0}^{N-1} \frac{(-n)^j}{j!}
\sum_{j_1+j_2+ \cdots + j_N = N-1-j}
\frac{ 1^{j_1} 2^{j_2} \cdots
N^{j_N}}{ j_1 ! j_2 ! \cdots j_N!}
\left(  \sum_\rho \rho^{-n} \beta_{j_1}(\rho)\beta_{j_2}(\rho^{2})  \cdots
\beta_{j_N}(\rho^{N}) \right)
\end{multline}
where $\rho$ is summed over all primitive $k$th roots of unity. It is clear from the form of \eqref{psc} that $W_k(N,n)$ is a polynomial in $n$ where, for fixed $k$ and $N$, the polynomial depends only on $n \bmod k$. Hence
 \begin{multline}  \label{psc2}
    w_{k,m}(N,x) = \frac{(-1)^{N-1}}{N!} \sum_{j=0}^{N-1} \frac{(-x)^j}{j!}\\
    \times
\sum_{j_1+j_2+ \cdots + j_N = N-1-j}
\frac{ 1^{j_1} 2^{j_2} \cdots
N^{j_N}}{ j_1 ! j_2 ! \cdots j_N!}
\left(  \sum_\rho \rho^{-m} \beta_{j_1}(\rho)\beta_{j_2}(\rho^{2})  \cdots
\beta_{j_N}(\rho^{N}) \right).
 \end{multline}
 The inner sum in \eqref{psc2} will be zero if $j_1+ \cdots +j_N$ is too small since $\beta_0(w)=0$ for $w\neq 1$. For $w=1$ we have $\beta_0(1)=1$. Therefore the smallest value of $j_1+ \cdots +j_N$ that gives a possible nonzero value for the inner sum is $N-\lfloor N/k \rfloor$. Hence we may assume $j \lqs \lfloor N/k \rfloor -1$ in \eqref{psc2}, proving the bound for the degree.

We have $\beta_m(1)=B_m$ and, by a formula of Glaisher \cite[Sect. 97]{Gl} (see also \cite[Prop. 3.2]{OS15}),
\begin{equation}
    \beta_m(\xi)  = (-1)^{m-1} m \sum_{j=1}^{m}  \stirb{m}{j} \frac{(j-1)!}{(\xi - 1)^j}
 \qquad (\xi \neq 1) \label{psi1}
\end{equation}
where $\stirb{m}{j}$ is the Stirling number that denotes the number of ways to partition a set of size $m$ into $j$  non-empty subsets.
For fixed $j_1, \dots, j_N$ the value of the inner sum in \eqref{psc2} is therefore  in the field $\Q(\rho)$.
This value  is unchanged under any automorphism of  $\Q(\rho)$, since the primitive $k$th roots will just be permuted. It follows that this value is in the fixed field of all automorphisms and so rational. Finally, \eqref{pdf} and \eqref{pdf2} follow from \eqref{psc2}  and the identity
\begin{equation*}
    \rho^\ell(1+\rho^b+\rho^{2b}+ \cdots + \rho^{(c-1)b})=0 \qquad \quad (c \gqs 2). \qedhere
\end{equation*}
\end{proof}

\subsection{Explicit waves}
The formula \eqref{psc2} gives a convenient expression for the $k$th wave, especially when combined with the result from \cite[Eq. (3.6)]{OS15}:
 \begin{equation}\label{ret}
    \beta_m(\xi)  = k^{m-1} \sum_{j=0}^{k-1} \xi^{j} B_m(j/k)
\end{equation}
for all $m \in \Z_{\gqs 0}$ and all $\xi \in \C$ with $\xi^k=1$ for $k \in \Z_{\gqs 1}$.
Sylvester in \cite{Sy3} and Glaisher in \cite{Gl} developed different descriptions for $W_k(N,n)$. For some details of this and further treatments see Dowker's papers \cite{Dow2,Dow1} and also \cite{Be1,FR02}. In  \cite[Eq. (46)]{FelRub06} the waves are written in terms of  Bernoulli and Eulerian polynomials of higher order. An interesting expression for the first wave $W_1$ that does not involve Bernoulli polynomials has recently appeared in \cite{DV}.

For fixed $k$ and $0 \lqs w \lqs k-1$ set
\begin{equation*}
    s_{m,w}(N):= \sum_{\substack{1 \lqs j \lqs N , \ j \equiv w \bmod k}} j^m.
\end{equation*}
A variation of a result of Glaisher  using the Apostol coefficients \eqref{apb} is the following, given in \cite[Eq. (4.8)]{OS15}.

\begin{theorem} For $k, N \in \Z_{\gqs 1}$, $s :=\lfloor N/k \rfloor$ and $n \in \Z$
\begin{multline}
    W_k(N,n) = \sum_\rho \frac {(-1)^{s-1} \rho^{-n} }{k^{2s} \cdot s! \prod_{1 \lqs r \lqs N-ks} (1-\rho^r)  } \\
    \times
     \sum_{1j_1+2j_2+ \cdots + N j_{N} = s-1}
     \frac{1}{j_1! j_2! \cdots j_N!}
     \left(-n-\frac{N(N+1)}2 - \sum_{w=0}^{k-1}  \frac{ \beta_1(\rho^w) \cdot s_{1,w}(N)}{1 \cdot 1!}\right)^{j_1} \\
    \times
     \left(- \sum_{w=0}^{k-1}  \frac{ \beta_2(\rho^w) \cdot s_{2,w}(N)}{2 \cdot 2!} \right)^{j_{2}} \cdots
     \left(- \sum_{w=0}^{k-1}  \frac{ \beta_N(\rho^w) \cdot s_{N,w}(N)}{N \cdot N!} \right)^{j_{N}} \label{wavek}
\end{multline}
where the outer sum is over all primitive $k$-th roots of unity $\rho$.
\end{theorem}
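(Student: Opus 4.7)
The plan is to start from the bracketed coefficient extraction in \eqref{psc} and exploit the fact that for a primitive $k$-th root of unity $\rho$, the product $F_\rho(z):=\prod_{j=1}^N \frac{jz}{\rho^j e^{jz}-1}$ has a zero of order exactly $N-s$ at $z=0$, so that $[z^{N-1}]$ collapses to $[z^{s-1}]$ of a function whose value at the origin is $1$. Indeed, $\rho^j=1$ iff $k\mid j$, so exactly $s=\lfloor N/k\rfloor$ of the factors are regular and nonzero at $z=0$ while the remaining $N-s$ contribute simple zeros. Writing $F_\rho(z)=z^{N-s}C_\rho\,\tilde F_\rho(z)$ with $\tilde F_\rho(0)=1$, the leading constant is $C_\rho=\prod_{k\nmid j,\,1\lqs j\lqs N}j/(\rho^j-1)$. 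A direct computation using $\prod_{k\nmid j}j=N!/(k^s\,s!)$, the standard count of $j\in\{1,\dots,N\}$ in each residue class $\bmod k$, and $\prod_{w=1}^{k-1}(1-\rho^w)=k$, gives $C_\rho=(-1)^{N-s}N!/\bigl(k^{2s}\,s!\prod_{r=1}^{N-ks}(1-\rho^r)\bigr)$. Combined with the $(-1)^{N-1}/N!$ from \eqref{psc}, this reproduces the scalar $(-1)^{s-1}/(k^{2s}\,s!\prod_r(1-\rho^r))$ appearing in \eqref{wavek} exactly.

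It remains to show that $[z^{s-1}]\bigl(e^{-nz}\tilde F_\rho(z)\bigr)$ equals the multinomial sum in \eqref{wavek}. Since $\tilde F_\rho(0)=1$, I would write $\tilde F_\rho(z)=\exp A_\rho(z)$ with $A_\rho(z)=\sum_{m\gqs 1}a_m z^m$ and apply the standard exponential formula
\begin{equation*}
[z^{s-1}]\exp\bigl(-nz+A_\rho(z)\bigr)=\sum_{\sum_m m\,j_m=s-1}\prod_{m\gqs 1}\frac{(a_m-n\,\delta_{m,1})^{j_m}}{j_m!}.
\end{equation*}
The multinomial form on the right is precisely that of \eqref{wavek}, so it suffices to identify each $a_m$ with the $m$-th bracketed quantity there.

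To compute the $a_m$, decompose $\log\tilde F_\rho(z)=\sum_{k\mid j}\log\frac{jz}{e^{jz}-1}+\sum_{k\nmid j}\log\psi_{\rho^j}(jz)$, where $\psi_\xi(z):=(\xi-1)/(\xi e^z-1)$. The key identity, obtained by differentiating the logarithm and substituting the Apostol expansion \eqref{apb}, is that for each $j$ the $z^m$-coefficient of the corresponding summand equals $-\beta_m(\rho^j)\,j^m/(m\cdot m!)$ for $m\gqs 2$ (treating $\beta_m(1)=B_m$ when $k\mid j$); for $m=1$ it equals $-j\bigl(1+\beta_1(\rho^j)\bigr)$, which summed over $j$ produces the $-N(N+1)/2$ shift together with the $-\sum_w\beta_1(\rho^w)s_{1,w}(N)$ term in the $m=1$ bracket of \eqref{wavek}. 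Since $\rho^j$ depends on $j$ only through $w:=j\bmod k$, grouping $\sum_j j^m\beta_m(\rho^j)$ by residue gives $\sum_{w=0}^{k-1}\beta_m(\rho^w)\,s_{m,w}(N)$, matching \eqref{wavek} exactly.

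The main obstacle is cleanly establishing this uniform coefficient identity: integrating $\frac{d}{dz}\log\psi_\xi(z)=-\xi e^z/(\xi e^z-1)$ against the Apostol coefficients $\beta_m$, handling the distinction $\beta_0(1)=1$ versus $\beta_0(\xi)=0$ for $\xi\neq 1$, and checking signs carefully so that the $-N(N+1)/2$ shift in the $m=1$ bracket appears correctly. Once this is in hand, substituting the $a_m$ into the exponential formula and multiplying by the prefactor completes the proof.
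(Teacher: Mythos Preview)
Your argument is correct. The paper does not prove this theorem; it merely states it and cites \cite[Eq.~(4.8)]{OS15} (describing it as ``a variation of a result of Glaisher''), so there is no in-paper proof to compare against. Your route---factoring out the zero of order $N-s$ from $\prod_j jz/(\rho^j e^{jz}-1)$, computing the leading constant via $\prod_{w=1}^{k-1}(1-\rho^w)=k$ and the residue-class count, and then taking the logarithm and applying the exponential (Bell-polynomial) formula---is exactly the natural one and all the checks go through: the sign $(-1)^{(k-1)s+r}=(-1)^{N-s}$ since $N=ks+r$; the logarithmic derivative $\psi_\xi'/\psi_\xi=-\xi e^z/(\xi e^z-1)=-1-\sum_{m\ge 1}\beta_m(\xi)z^{m-1}/m!$ integrates to give $z^m$-coefficient $-\beta_m(\xi)/(m\cdot m!)$ for $m\ge 2$ and $-(1+\beta_1(\xi))$ for $m=1$, uniformly for $\xi=1$ and $\xi\neq 1$; and the truncation at $m=N$ in \eqref{wavek} is harmless since $\sum m j_m=s-1\le N-1$. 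The ``main obstacle'' you flag is in fact routine once the decomposition $\tilde F_\rho=\prod_{k\mid j}\frac{jz}{e^{jz}-1}\prod_{k\nmid j}\psi_{\rho^j}(jz)$ is in hand, so this is a complete proof.
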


This means of calculating $W_k(N,n)$ is computationally faster than \eqref{psc2} when $N$ is large and the wave computations in Tables \ref{www}, \ref{tbl}, \ref{tgz}, \ref{tbl22} and  \ref{w2tb}  were carried out using \eqref{wavek}. An efficient means of computing waves that avoids roots of unity is given in \cite{SZ12}.

 For a simple example,
\begin{equation*}
    W_1(6,n) = \Bigl(    12 n^5 + 630 n^4 +12320 n^3 + 110250 n^2 + 439810 n + 598731\Bigr)/1036800.
\end{equation*}
We notice that all the coefficients of $W_1(6,n)$ are positive (the same was true for $W_1(5,n)$ in \eqref{w15}) and this positivity continues in $W_1(N,n)$ for increasing $N$. However, it must eventually fail; as we see from Table \ref{tbl}, at least one of the coefficients of the polynomial $W_1(1600,n)$ is negative.

For $k=2$ we have $W_2(N,n)=(-1)^n w_{2,0}(N,n)$ by \eqref{pdf}. In this case $\beta_j(-1)=(2^j-1)B_j$ and we find, for instance with $N=10$,
\begin{equation*}
    W_2(10,n) =
      (-1)^n\Bigl(   30 n^4+ 3300 n^3 +125400 n^2 + 1905750 n +9406331 \Bigr)/88473600.
\end{equation*}
The third wave for $N=10$ is
\begin{equation*}
    W_3(10,n) =
      \Bigl[6 n^2 + 344 n + 4317, \
    -28 n - 770, \ -6 n^2 - 316 n - 3547\Bigr]/52488.
\end{equation*}

For large values of $k$ we may simplify the formula \eqref{wavek}. When $N/2<k\lqs N$ we have $s=1$ and hence
\begin{equation}\label{ptk}
    W_k(N,n) = k^{-2} \sum_\rho \frac { \rho^{-n} }{ (1-\rho^1)(1-\rho^2)  \cdots (1-\rho^{N-k})   } .
\end{equation}
As we saw in Proposition \ref{well}, \eqref{ptk} must be a rational number and for fixed $k$ and $N$ it depends only on  $n \bmod k$. We next show how  these rationals may be expressed more explicitly. Note also the similarity of \eqref{ptk} with the Fourier-Dedekind sums of \cite{Be2}.

In the simplest case $k=N$ of \eqref{ptk}
\begin{equation}\label{ptkn}
    W_N(N,n) = N^{-2} \sum_\rho  \rho^{-n}
\end{equation}
which was already given in \cite[Eq. (47)]{FelRub06}.  The sum in \eqref{ptkn} is over all primitive $N$-th roots of unity. Such sums are called   Ramanujan sums and may be evaluated, see \cite[Thm. 271]{HW}, as the integer
\begin{equation*}
    c_N(n):= \sum_\rho  \rho^{n} = \sum_{d \mid (N,n)} d \cdot \mu(N/d)
\end{equation*}
where $\mu(m)$ is the M\"obius function, defined to be $0$ unless $m$ is squarefree and otherwise $-1$ to the power of the number of prime factors of $m$ (with $\mu(1)=1$).  The next result gives formulas for $ W_k(N,n)$ explicitly in terms of rationals  when $k$ is  $N,$ $N-1$ or $N-2$. Recall the Bernoulli polynomials $B_m(x)$ from \eqref{bepo} with $B_1(x)=x-1/2$ and $B_2(x)=x^2-x+1/6$.

\begin{prop} \label{ivo}
For  $n \in \Z$
\begin{align}\label{ex1}
    W_k(k,n) & = \frac 1{k^2} c_k(n) \qquad (k\in \Z_{\gqs 1}),\\
    W_{k}(k+1,n) & = \frac{-1}{k^{2}} \sum_{j=0}^{k-1}B_1\left(\frac j{k}\right)c_{k}(j-n)\qquad (k\in \Z_{\gqs 2}),\label{ex2}\\
    W_{k}(k+2,n) & = \frac{-1}{4k^{2}} \sum_{j=0}^{k-1}\left[3 B_1\left(\frac j{k}\right)+k \cdot B_2\left(\frac j{k}\right)\right]c_{k}(j-n)  - \frac{(-1)^n}{4 k^{2}} \sum_{j=0}^{\ell-1}B_1\left(\frac j{\ell}\right) c_{\ell}(j-n)  \label{ex3}
\end{align}
where in \eqref{ex3} we have $k\in \Z_{\gqs 3}$ and
\begin{equation*}
    \ell:=\begin{cases}
    k & \text{if} \quad k\equiv 0 \bmod 4,\\
    k/2 & \text{if} \quad  k\equiv 2 \bmod 4,\\
    2k & \text{if}  \quad k\equiv 1,3 \bmod 4.
    \end{cases}
\end{equation*}
\end{prop}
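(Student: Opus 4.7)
The strategy is to specialize \eqref{ptk} and then use the Fourier-type expansion \eqref{ret} of the Apostol coefficients $\beta_m(\rho)$ together with the Ramanujan-sum identity $c_k(n) = \sum_\rho \rho^n$ to replace roots of unity by rationals. Case \eqref{ex1} is immediate: for $N = k$ the product in \eqref{ptk} is empty, so $W_k(k,n) = k^{-2} \sum_\rho \rho^{-n} = k^{-2} c_k(n)$, using that the set of primitive $k$-th roots is closed under $\rho \mapsto \rho^{-1}$.

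For \eqref{ex2}, expanding $z/(\rho e^z - 1)$ around $z = 0$ gives $\beta_1(\rho) = 1/(\rho-1) = -1/(1-\rho)$ for $\rho \neq 1$. Combining this with \eqref{ret} at $m=1$ yields the key identity
\[
\frac{1}{1-\rho} = -\sum_{j=0}^{k-1} \rho^j B_1(j/k).
\]
Plugging this into $W_k(k+1,n) = k^{-2}\sum_\rho \rho^{-n}/(1-\rho)$ and swapping the order of summation produces \eqref{ex2}.

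For \eqref{ex3}, the starting point is the partial fraction decomposition
\[
\frac{1}{(1-\rho)(1-\rho^2)} = \frac{1/2}{(1-\rho)^2} + \frac{1/4}{1-\rho} + \frac{1/4}{1+\rho}.
\]
The $1/(1-\rho)$ piece contributes as in \eqref{ex2}. For $1/(1-\rho)^2$ one checks from \eqref{apb} that $\beta_2(\rho) = -2\rho/(1-\rho)^2$; using \eqref{ret} at $m = 2$ then gives
\[
\sum_\rho \frac{\rho^{-n}}{(1-\rho)^2} = -\frac{k}{2}\sum_{j=0}^{k-1} B_2(j/k)\, c_k(j - n - 1).
\]
For $1/(1+\rho)$, substitute $\eta = -\rho$, so that $\rho^{-n}/(1+\rho) = (-1)^n \eta^{-n}/(1-\eta)$. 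As $\rho$ ranges over primitive $k$-th roots of unity, $\eta$ ranges bijectively over primitive $\ell$-th roots of unity, where the order of $-\rho$ (computed by evaluating $\gcd(k/2+b,k)$ when $\rho$ has exponent $b$) produces exactly the three cases for $\ell$ listed in the proposition. Applying the identity from the proof of \eqref{ex2} to the $\eta$-sum yields
\[
\sum_\rho \frac{\rho^{-n}}{1+\rho} = -(-1)^n \sum_{j=0}^{\ell-1} B_1(j/\ell)\, c_\ell(j-n).
\]

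The main algebraic obstacle is reconciling the shifted argument $j-n-1$ in the $B_2$-sum with the argument $j-n$ appearing in \eqref{ex3}. Re-indexing via $c_k(j-n-1) = c_k((j+1)-n)$ modulo $k$ produces $B_2((j+1)/k)$ in place of $B_2(j/k)$. A direct expansion yields
\[
k B_2\bigl((j+1)/k\bigr) = 2 B_1(j/k) + k B_2(j/k) + 1/k,
\]
and the extra constant $1/k$ drops out because $\sum_{j=0}^{k-1} c_k(j-n) = \sum_\rho \sum_j \rho^{j-n} = 0$ for $k \geq 2$. Combining the resulting $2B_1(j/k)$ with the $B_1(j/k)$ coming from the $1/(1-\rho)$ piece assembles the coefficient $3 B_1(j/k) + k B_2(j/k)$ in \eqref{ex3}, completing the proof.
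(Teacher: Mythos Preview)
Your proof is correct and follows essentially the same route as the paper: the same specialization of \eqref{ptk}, the same partial-fraction decomposition of $1/((1-\rho)(1-\rho^2))$, the same use of \eqref{ret} to convert to Bernoulli polynomials, and the same substitution $\eta=-\rho$ together with the order analysis giving the three values of $\ell$. The only cosmetic difference is in handling the $(1-\rho)^{-2}$ term: the paper writes $\frac{1}{(1-\rho)^2}=\frac{1}{1-\rho}-\tfrac{1}{2}\beta_2(\rho)$, which avoids any shift, whereas you use $\frac{1}{(1-\rho)^2}=-\tfrac{1}{2\rho}\beta_2(\rho)$, introducing the offset $c_k(j-n-1)$ that you then repair via the identity $kB_2((j+1)/k)=2B_1(j/k)+kB_2(j/k)+1/k$ and the vanishing of $\sum_j c_k(j-n)$; both lead to the same expression.
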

\begin{proof}
We have already seen \eqref{ex1}.
With $m=1$ in \eqref{psi1} and \eqref{ret} we find
\begin{equation*}
    \frac{1}{1-\rho} = -\beta_1(\rho) = -\sum_{j=0}^{k-1} B_1(j/k) \rho^j
\end{equation*}
for $\rho$ any $k$-th root of unity with $\rho\neq 1$. Summing over all primitive $k$-th root of unity we have
\begin{equation}\label{hlee}
    W_{k}(k+1,n) = \frac{1}{k^{2}} \sum_\rho \frac { \rho^{-n} }{ 1-\rho} = \frac{-1}{k^{2}} \sum_{j=0}^{k-1} B_1(j/k) \sum_\rho  \rho^{j-n}
\end{equation}
and \eqref{ex2} follows. For $k \gqs 3$ we have
\begin{equation}\label{hlee2}
    W_{k}(k+2,n) = \frac{1}{k^{2}} \sum_\rho \frac { \rho^{-n} }{ (1-\rho)(1-\rho^2)} = \frac{-1}{k^{2}} \sum_\rho \left[ \frac 14 \frac { \rho^{-n} }{ 1-\rho} + \frac 14 \frac { \rho^{-n} }{ 1+\rho} + \frac 12 \frac { \rho^{-n} }{ (1-\rho)^2} \right]
\end{equation}
using the partial fraction decomposition for $1/((1-\rho)(1-\rho^2))$ as in \cite[p. 735]{OS15}. The first sum on the right of \eqref{hlee2} has been found with \eqref{hlee}. The second sum may be evaluated in the same way since
\begin{equation*}
    \sum_\rho \frac { \rho^{-n} }{ 1+\rho} = (-1)^n \sum_\rho \frac { (-\rho)^{-n} }{ 1-(-\rho)}
\end{equation*}
and $\rho$ is a primitive $k$-th root of unity if and only if $-\rho$ is a primitive $\ell$-th root of unity. The third sum on the right of \eqref{hlee2} can be found similarly. Use $m=2$ in \eqref{ret} and \eqref{psi1} to get
\begin{equation*}
    \frac{1}{(1-\rho)^2} - \frac{1}{1-\rho} = \frac{-\beta_2(\rho)}{2} = \frac{-k}2 \sum_{j=0}^{k-1} B_2(j/k) \rho^j. \qedhere
\end{equation*}
\end{proof}
It should be possible to find similar  formulas for $W_{k}(k+3,n),$ $W_{k}(k+4,n)$ etc. When $k$ is a prime $p$, Proposition \ref{ivo} implies
\begin{align}
    W_p(p,n) & = \bigl[p-1,-1,-1,\cdots,-1 \bigr]/p^2, \label{tg1}\\
    W_p(p+1,n) & = \bigl[p',p'-1,p'-2,\cdots,p'-p+1 \bigr]/p^2 \label{tg2}
\end{align}
in the notation \eqref{wkv} and for $p':=(p-1)/2$. The identity \eqref{tg1} appears in \cite[p. 131]{Ca}. Also
\begin{equation} \label{tg3}
    W_p(p+2,n) = \frac{1}{4p^2}\left( -\overline{n}^2+\overline{n}(p-3)+ \frac{(-1)^{\overline{n}} p}2 - \frac{p^2-9p+11}{6}\right)
\end{equation}
for $p \gqs 3$, where $n\equiv \overline{n} \bmod p$ and $0\lqs \overline{n} \lqs p-1$.

\subsection{General denumerants} \label{gend}

The results  in Section \ref{swq} may be extended in a straightforward manner to the general restricted partition problem considered by Cayley and Sylvester. Let $A=\{a_1,a_2,\cdots, a_N\}$ be a fixed set of positive integers, not necessarily distinct, and write $p_A(n)$ for the number of solutions   to
\begin{equation*}
    a_1 x_1+ a_2 x_2 + \dots +a_N x_N =n \qquad (x_1, x_2, \dots , x_N \in \Z_{\gqs 0}).
\end{equation*}
Our focus, $p_N(n)$, equals $p_A(n)$ for $A=\{1,2,\cdots,N\}$. In general for $n \in \Z_{\gqs 0}$,
\begin{equation} \label{sylthm2}
    p_A(n)=\sum_{k | \text{ some }a_i} W_k(A,n)
\end{equation}
where
\begin{equation} \label{wkv2}
    W_k(A,n) = \bigl[ w_{k,0}(A,n), \ w_{k,1}(A,n), \ \ldots, \ w_{k,k-1}(A,n) \bigr],
\end{equation}
and, with a sum  over all primitive $k$-th roots of unity $\rho$ as before,
\begin{equation} \label{waveb}
    W_k(A,n)=\res_{z=0} \sum_\rho \frac{\rho^n e^{n z}}{(1-\rho^{-a_1} e^{-a_1z})(1-\rho^{-a_2} e^{-a_2z}) \cdots (1-\rho^{-a_N} e^{-a_Nz})}.
\end{equation}

\begin{prop} \label{well2}
For each wave $W_k(A,n)$, equation \eqref{wkv2} is valid for polynomials $w_{k,j}(A,x) \in \Q[x]$ that have degree at most  one less than the number of elements of $A$ that are divisible by $k$.
With any factorization $k=bc$ and $0\lqs \ell \lqs b-1$, they satisfy
\begin{equation} \label{pdf22}
    w_{k,\ell}(A,x)+ w_{k,\ell+b}(A,x)+w_{k,\ell+2b}(A,x)+\cdots + w_{k,\ell+(c-1)b}(A,x)=0 \qquad \quad (c \gqs 2).
\end{equation}
\end{prop}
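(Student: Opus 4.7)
The plan is to adapt the proof of Proposition \ref{well} essentially verbatim, replacing the implicit use of $a_i = i$ and $\lfloor N/k \rfloor = \#\{i \leq N : k \mid i\}$ by the corresponding quantity for the general set $A$. Starting from \eqref{waveb}, I would pull out a factor of $a_i z/(\rho^{a_i} e^{a_i z} - 1)$ from each factor $(1 - \rho^{-a_i} e^{-a_i z})^{-1}$ of the integrand and apply the Apostol expansion \eqref{apb}. Extracting the residue at $z=0$ as a coefficient of $z^{N-1}$ and expanding $e^{-nz}$ in its power series yields an analogue of \eqref{psc}--\eqref{psc2} of the form
\begin{equation*}
W_k(A,n) = C \sum_{j \geq 0} \frac{(-n)^j}{j!} \sum_{j_1+\cdots+j_N = N-1-j} \frac{a_1^{j_1}\cdots a_N^{j_N}}{j_1!\cdots j_N!} \Bigl(\sum_\rho \rho^{-n} \beta_{j_1}(\rho^{a_1})\cdots \beta_{j_N}(\rho^{a_N})\Bigr),
\end{equation*}
where $C$ is an explicit constant and $\rho$ runs over primitive $k$-th roots of unity.

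The structural input that controls the degree bound and the $n \bmod k$ periodicity is the same as before: $\beta_0(\xi) = 0$ unless $\xi = 1$, while $\beta_0(1) = 1$. Since $\rho^{a_i} = 1$ exactly when $k \mid a_i$, any index $i$ with $k \nmid a_i$ must have $j_i \geq 1$ in a surviving term. Letting $s := \#\{i : k \mid a_i\}$, this forces $\sum_{i:\, k \nmid a_i} j_i \geq N - s$, hence $j \leq s - 1$, which is the required degree bound (one less than the number of elements of $A$ divisible by $k$). The dependence of $W_k(A,n)$ on $n$ enters only through the polynomial factor $(-n)^j$ and the character $\rho^{-n}$, so $W_k(A,n)$ is a polynomial in $n$ whose coefficients depend on $n$ only through $n \bmod k$, justifying the notation \eqref{wkv2}.

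Rationality of the coefficients of $w_{k,j}(A,x)$ follows exactly as in Proposition \ref{well}: by the closed form \eqref{psi1} the inner sum lies in the cyclotomic field $\Q(\rho)$, but it is invariant under $\mathrm{Gal}(\Q(\rho)/\Q)$ because the Galois action merely permutes the primitive $k$-th roots of unity over which we sum. To prove the orthogonality relation \eqref{pdf22}, factor $k = bc$ and fix $\ell$. Summing the resulting explicit formula for $w_{k, \ell + mb}(A,x)$ over $m = 0, 1, \ldots, c-1$ pulls a factor
\begin{equation*}
\sum_{m=0}^{c-1} \rho^{-(\ell+mb)} = \rho^{-\ell} \sum_{m=0}^{c-1} (\rho^{-b})^m
\end{equation*}
inside the sum over $\rho$. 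For any primitive $k$-th root of unity $\rho$, the element $\rho^b$ has order exactly $c$ (this is where one uses $\gcd(m,k)=1$ with $k = bc$), so the inner geometric sum vanishes for $c \geq 2$, and \eqref{pdf22} follows.

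The argument is a routine extension of Proposition \ref{well} and presents no real obstacle; the only place that requires genuine care is replacing the original counting $\lfloor N/k \rfloor$ by $s = \#\{i : k \mid a_i\}$ in the degree bound and verifying that the vanishing argument for $\beta_0(\rho^{a_i})$ still produces a clean telescoping of the admissible multi-indices $(j_1, \ldots, j_N)$.
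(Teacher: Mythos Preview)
Your proposal is correct and follows exactly the route the paper intends: the paper states Proposition \ref{well2} without proof, indicating only that ``the results in Section \ref{swq} may be extended in a straightforward manner,'' and your adaptation of the proof of Proposition \ref{well} carries this out faithfully. One small notational slip: in your parenthetical ``(this is where one uses $\gcd(m,k)=1$ with $k = bc$)'' the symbol $m$ is already your summation index; you mean that $\rho$ is primitive, so $\rho^b$ has exact order $k/b = c$.
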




\section{Asymptotics for Sylvester waves}
For $N \in \Z_{\gqs 1}$ and $n \in \Z$, Theorem \ref{qn} implies
\begin{equation} \label{slee}
    p_N(n)=-\sum_{h/k \in \farey_N} Q_{hk(-n)}(N),
\end{equation}
where we define $p_N(n)$ to be $0$ for $-N(N+1)/2<n<0$ and $(-1)^{N+1} p_N \bigl(-n-N(N+1)/2 \bigr)$ if
$n \lqs -N(N+1)/2$.
Put
\begin{equation} \label{a(n)}
    \mathcal A(N)  := \Bigl\{ h/k \ : \ N/2  <k \lqs N,  \ h=1 \text{ \ or \ } h=k-1 \Bigr\} \subseteq \farey_N
\end{equation}
and for $N$ large we partition $\farey_N$ into three parts: $\farey_{100}$, $\mathcal A(N)$ and the rest. The sum \eqref{slee} becomes
\begin{equation} \label{slee2}
    p_N(n)=-\left[\sum_{h/k \in \farey_{100}}  + \sum_{h/k \in \farey_N- (\farey_{100} \cup \mathcal A(N))} + \sum_{h/k \in \mathcal A(N)}\right] Q_{hk(-n)}(N).
\end{equation}
The sum over $\farey_{100}$ is the sum of the first $100$ waves
\begin{equation} \label{w100}
    \sum_{k=1}^{100} W_k(N,n) = -\sum_{h/k \in \farey_{100}} Q_{hk(-n)}(N).
\end{equation}
Write the sum over $\mathcal A(N)$, see \cite[Eq. (1.21)]{OS1}, as
\begin{equation} \label{sum}
    \mathcal A_1(N,-n):=\sum_{h/k \in \mathcal A(N)} Q_{hk(-n)}(N).
\end{equation}
With \eqref{slee2}, \eqref{w100} and \eqref{sum} we may write our key identity as
\begin{equation}\label{keyi}
    \sum_{k=1}^{100} W_k(N,n)=p_N(n) +\mathcal A_1(N,-n) + \sum_{h/k \in \farey_N- (\farey_{100} \cup \mathcal A(N))}  Q_{hk(-n)}(N).
\end{equation}

Since every $Q_{hk(-n)}(N)$ in \eqref{sum} is the residue of a simple pole, they may be calculated as in \cite[Eq. (1.22)]{OS1} to obtain
\begin{equation}\label{cj1}
\mathcal A_1(N,-n) = \Im \sum_{ \frac{N}{2}  <k \lqs N} \frac{2(-1)^{k}}{k^2}\exp\left( \frac{i\pi}{2}\left[ \frac{-N^2-N-4 n}{k}+3N \right]\right) \spr{1/k}{N-k}
\end{equation}
where we used the reciprocal of the product
\begin{equation} \label{sidef}
    \spn{\theta}{m} :=\prod_{j=1}^m 2\sin (\pi j \theta)
\end{equation}
with $\spn{\theta}{0}:=1$. The right side of \eqref{cj1} can be analyzed in great detail and its asymptotics found:

\begin{theorem}\label{thma} Let $\lambda^+$ be a positive real number. Suppose $N \in \Z_{\gqs 1}$ and  $\lambda N \in \Z$ for $\lambda$ satisfying $|\lambda| \lqs \lambda^+$.
Then for $a_{0}(\lambda)=2  z_0 e^{-\pi i z_0(1+2\lambda)}$ and  explicit  $a_{1}(\lambda),$ $a_{2}(\lambda), \dots $  we have
\begin{equation} \label{thmaa}
   \mathcal A_1(N,-\lambda N) = \Re\left[\frac{w_0^{-N}}{N^{2}} \left( a_{0}(\lambda)+\frac{a_{1}(\lambda)}{N}+ \dots +\frac{a_{m-1}(\lambda)}{N^{m-1}}\right)\right] + O\left(\frac{|w_0|^{-N}}{N^{m+2}}\right)
\end{equation}
for an implied constant depending only on  $\lambda^+$ and $m$.
\end{theorem}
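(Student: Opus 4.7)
The plan is to derive \eqref{thmaa} from the explicit closed form \eqref{cj1} by applying the saddle-point method of Perron to a continuous integral approximation of the sum over $N/2<k\lqs N$. I would first parametrize via $z=N/k$, so $z$ runs discretely through $[1,2)$ with spacing of order $1/N$. Writing $(-1)^k=e^{i\pi N/z}$ and $(-N^2-N-4\lambda N)/k=-z(N+1+4\lambda)$, the exponential factor in \eqref{cj1} becomes
\[
\exp\!\Bigl(\tfrac{i\pi N}{z}\Bigr)\exp\!\Bigl(\tfrac{i\pi}{2}[N(3-z)-z(1+4\lambda)]\Bigr),
\]
whose $\lambda$-dependence is entirely in the elementary factor $e^{-i\pi z(1+2\lambda)/2}$. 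For the sine product I would apply the Euler--Maclaurin expansion of $\log\spr{1/k}{N-k}=-\sum_{j=1}^{N-k}\log(2\sin(\pi j/k))$, whose leading term is $\frac{N}{2\pi z}\cl\bigl(2\pi(z-1)\bigr)$, followed by a full series of $1/N$-corrections from Bernoulli numbers and boundary derivatives of $\log(2\sin(\pi t))$.

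Continuing to complex $z$ via $\cl(2\pi(z-1))=\Im\li(1-e^{2\pi iz})$, the combined phase takes the form $e^{N\Phi(z)}$ for a $\lambda$-independent meromorphic function $\Phi$ built from $\li(1-e^{2\pi iz})$, $\log(1-e^{2\pi iz})$, and elementary linear pieces. Using standard dilogarithm identities (in particular $\li'(w)=-\log(1-w)/w$ together with the Abel five-term relation), the saddle equation $\Phi'(z)=0$ reduces to $\li(w)-2\pi i\log w=0$ with $w=1-e^{2\pi iz}$. This is precisely the defining equation of $w_0$ from the introduction, so the relevant saddle is $z=z_0$, and a direct evaluation yields $e^{N\Phi(z_0)}=w_0^{-N}$ up to an elementary $\lambda$-dependent factor that is absorbed into the leading coefficient $a_0(\lambda)=2z_0e^{-\pi iz_0(1+2\lambda)}$.

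With the saddle identified, I would convert the sum into a contour integral (via Abel--Plana or a Mellin--Perron representation), deform through $z_0$ and its conjugate $\bar z_0$, and apply Perron's method to obtain a full asymptotic series $\sum_{j\gqs 0}a_j(\lambda)/N^j$ within $N^{-2}w_0^{-N}$. The conjugate saddle $\bar z_0$ supplies the complex conjugate of the leading expansion and, combined with the outer $\Im$ in \eqref{cj1}, yields the $\Re[\,\cdot\,]$ of \eqref{thmaa}. Higher coefficients $a_j(\lambda)$ arise from successive Taylor terms of $\Phi$ at $z_0$ together with the $1/N$-tail of the sine-product expansion, and are polynomial in $\lambda$ with coefficients built from $z_0$, $w_0$ and derivatives of $\li$ at $w_0$.

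The main obstacle is the three-layered bookkeeping: the Euler--Maclaurin expansion of $\spr{1/k}{N-k}$, the sum-to-integral passage, and Perron's saddle expansion must each be truncated at matching order with errors $O(|w_0|^{-N}/N^{m+2})$ uniformly for $|\lambda|\lqs\lambda^+$. In addition, the non-saddle pieces---particularly the endpoint contributions at $k\approx N/2$ and $k=N$, and the parts of the deformed contour away from $z_0$---must each be shown to decay faster than $|w_0|^{-N}$. The saving feature is that $\Phi$ is $\lambda$-independent, so $z_0$ is fixed and uniformity in $\lambda$ reduces to controlling a single elementary prefactor.
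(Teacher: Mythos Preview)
Your approach is essentially the same as the paper's: parametrize by $z=N/k$, expand $\spr{1/k}{N-k}$ via Euler--Maclaurin, pass from the sum to an integral, and apply Perron's saddle-point method at $z_0$, with the $\lambda$-dependence isolated in an elementary prefactor independent of the phase. Two technical points need correction. First, the analytic continuation of Clausen's integral is $\cl(2\pi z)=-i\li(e^{2\pi iz})+i\pi^2(z^2-(2m+1)z+m^2+m+1/6)$ for $m<\Re(z)<m+1$ (see \eqref{dli}), not $\Im\li(1-e^{2\pi iz})$; this feeds directly into the definition of $p(z)$ in \eqref{p(z)}, and getting it wrong would give the wrong saddle equation. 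Second, there is no conjugate saddle $\bar z_0$ on the deformed contour: the path $\mathcal P$ lies entirely in the upper half-plane and passes through $z_0$ alone. The $\Re[\,\cdot\,]$ in \eqref{thmaa} arises simply because the outer $\Im$ in \eqref{cj1} is applied to the single-saddle expansion, converting $\Im[i\cdot(\text{series})]$ to $\Re[\text{series}]$.

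The substantive work you have glossed over is twofold. The sum-to-integral conversion must absorb the alternating sign $(-1)^k$ and produce an error of size $O(e^{0.05N})$, strictly smaller than $|w_0|^{-N}\approx e^{0.068N}$; the paper does this via a tailored result \cite[Thm.~4.3]{OS1} rather than a generic Abel--Plana argument, and the margin is thin enough that the details matter. Equally essential is the verification that $\Re(p(z)-p(z_0))>0$ everywhere on the deformed contour except at $z_0$ itself, which the paper imports from \cite[Thm.~5.2]{OS1}; without this, Perron's method does not apply.
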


This theorem is proved in Section \ref{klmx}. The next result, on the last component of \eqref{keyi}, is shown in Section \ref{klmx2}.

\begin{theorem}\label{thmb} Let $\lambda^+$ be a positive real number. Suppose $N \in \Z_{\gqs 1}$ and  $\lambda N \in \Z$ for $\lambda$ satisfying $|\lambda| \lqs \lambda^+$.
Then for an implied constant depending only on $\lambda^+$,
\begin{equation} \label{indx}
   \sum_{h/k \in \farey_N-(\farey_{100} \cup \mathcal A(N))} Q_{hk(-\lambda N)}(N) = O\left(e^{0.055N}\right).
\end{equation}
\end{theorem}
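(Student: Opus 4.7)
The plan is to bound $|Q_{hk(-\lambda N)}(N)|$ individually and then sum over the remaining Farey fractions. This index set splits into two natural groups:
$$\mathcal B_1 := \{h/k \in \farey_N : 100 < k \lqs N/2\}, \qquad \mathcal B_2 := \{h/k \in \farey_N : N/2 < k \lqs N,\ 2 \lqs h \lqs k-2\},$$
so that for $h/k \in \mathcal B_1$ the pole of $Q(z; N, -\lambda N)$ at $z = h/k$ has order $\lfloor N/k \rfloor \gqs 2$, while for $h/k \in \mathcal B_2$ the pole is simple.

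For $h/k \in \mathcal B_2$, a simple-pole residue computation parallel to the one producing \eqref{cj1} gives $Q_{hk(-\lambda N)}(N) = (\text{unit-modulus phase}) \cdot k^{-2} \cdot \spr{h/k}{N-k}$. The size of $|\spr{h/k}{N-k}|$ is then controlled by the Clausen-type asymptotic for $\sum_{j=1}^{M} \log|2\sin(\pi j h/k)|$; the resulting exponential growth rate is governed by $(1/k)\operatorname{Cl}_2(2\pi h/k)$ and is maximised for $h/k$ close to $0$ or $1$. Excluding $h = 1,\, k-1$ drops this rate strictly below $U = -\log|w_0| \approx 0.068$, and a uniform estimate over the remaining $h/k \in \mathcal B_2$ together with $|\lambda|\lqs \lambda^+$ shows that an exponent of $0.054$ suffices.

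For $h/k \in \mathcal B_1$, the higher-order residue can be unfolded via the Apostol-coefficient formulas \eqref{psc} and \eqref{wavek}, giving a polynomial in $\lambda N$ of degree at most $\lfloor N/k \rfloor - 1$ multiplied by a reciprocal sine product of the same type as in $\mathcal B_2$. Since $k > 100$ the polynomial factor contributes at most $e^{o(N)}$, while the sine product is again controlled by the Clausen asymptotic with the same convexity behaviour in $h/k$. Combining both parts and summing the $O(N^2)$ resulting bounds yields the claimed $O(e^{0.055 N})$.

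The main obstacle is obtaining a \emph{uniform} bound over $\mathcal B_1$: both the order of the pole and the polynomial degree grow with $N/k$, so the individual contributions must be combined with care to avoid an overall loss. A convexity-based argument for the function $(1/k)\operatorname{Cl}_2(2\pi h/k)$, of the type already developed in \cite{OS1,OS15}, should show that the growth rate over $\mathcal B_1 \cup \mathcal B_2$ stays strictly below $U$; once this is in place, the polynomial count of Farey fractions is harmlessly absorbed into the exponential and the theorem follows.
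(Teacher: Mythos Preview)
Your approach of bounding each $|Q_{hk(-\lambda N)}(N)|$ individually cannot work for the whole index set, and the claimed bound ``an exponent of $0.054$ suffices'' in your $\mathcal B_2$ paragraph is false. The paper is explicit about this: it says that for the subsets $\mathcal C(N)$, $\mathcal D(N)$, $\mathcal E(N)$ absolute-value bounds give something \emph{larger} than the main term of Theorem~\ref{thma}, so saddle-point methods are required.

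Concretely, take $h=2$, $k$ odd, $N/2<k\lqs N$ (this lies in your $\mathcal B_2$). The residue involves $\spr{2/k}{N-k}$, and Euler--Maclaurin gives
\[
\log\bigl|\spr{2/k}{N-k}\bigr|\ \approx\ N\cdot \frac{\cl(2\pi y)}{2\pi y},\qquad y:=\frac{2N}{k}\in(2,4).
\]
The function $\cl(2\pi y)/(2\pi y)$ attains a value of roughly $0.0746$ near $y=13/6$, so individual terms with $k\approx 12N/13$ have size about $e^{0.075N}$, already larger than both the target $e^{0.055N}$ and the main term $|w_0|^{-N}\approx e^{0.068N}$. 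The same phenomenon occurs for $h=(k\pm 1)/2$ with $k$ odd (your $\mathcal B_2$ again) and for $h=1$, $N/3<k\lqs N/2$ (your $\mathcal B_1$). Your stated rate ``$(1/k)\cl(2\pi h/k)$'' is not the relevant quantity; the product has $N-k$ factors, and the correct exponent per unit $N$ depends on $N/k$ through expressions like the one above.

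What the paper actually does is partition $\farey_N-(\farey_{100}\cup\mathcal A(N))$ into $\mathcal B(101,N)\cup\mathcal C(N)\cup\mathcal D(N)\cup\mathcal E(N)$. Your termwise strategy is exactly what works for $\mathcal B(101,N)$ (Proposition~\ref{by}). But for each of $\mathcal C(N)$, $\mathcal D(N)$, $\mathcal E(N)$ the paper must write the sum as an integral, move the contour through a saddle point of $p(z)$ or $p_1(z)$, and exploit the resulting oscillation to get exponents $\approx 0.026$, $0.034$, $0.036$ respectively (Propositions~\ref{cy}, \ref{dy}, \ref{ey}). No purely convexity-based bound on $\cl$ will recover these; cancellation is essential.
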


 These two results, combined with bounds for $p_N(n)$, imply Theorem \ref{ma2}:

\begin{proof}[Proof of Theorem \ref{ma2}]
Combining \eqref{keyi} with Theorems \ref{thma}, \ref{thmb} shows
\begin{align}
    \sum_{k=1}^{100} W_k(N,\lambda N) & = p_N(\lambda N) +  \mathcal A_1(N,-\lambda N) + O\left(e^{0.055 N}\right) \notag\\
     & = p_N(\lambda N) + \Re\left[\frac{w_0^{-N}}{N^{2}} \left( a_{0}(\lambda)+\frac{a_{1}(\lambda)}{N}+ \dots +\frac{a_{m-1}(\lambda)}{N^{m-1}}\right)\right] +  O\left(\frac{|w_0|^{-N}}{N^{m+2}}\right) \label{ptg}
\end{align}
where $|w_0|^{-N} \approx e^{0.068N}$. The estimate
\begin{equation} \label{ptg2}
 p_N(\lambda N) \ll  e^{\bigl(2\pi \sqrt{|\lambda|/6}\bigr)\sqrt{N}}
\end{equation}
follows from $|p_N(n)|\lqs p_N(|n|) \lqs p(|n|)$ and \eqref{pnexp}.
Calculus shows the crude bound $e^{r\sqrt{N}}\lqs e^{r^2/(4t)}\cdot e^{tN}$ for any $r\gqs 0,$ $t>0$. Hence, with $r=2\pi \sqrt{|\lambda|/6}$ and $t=0.055$, say,
\begin{equation*}
 p_N(\lambda N) \ll  e^{\bigl(2\pi \sqrt{|\lambda|/6}\bigr)\sqrt{N}} \lqs e^{\pi^2|\lambda|/(6(0.055))} e^{0.055N}.
\end{equation*}
Therefore $p_N(\lambda N)$ may be included in the error term in \eqref{ptg}. This completes the proof of Theorem \ref{ma2}.
\end{proof}

\section{Required results}
\subsection{The saddle-point method}

We will apply Perron's saddle-point method from \cite{Pe17} in Sections \ref{klmx} and \ref{sec7}. The exact form we need is given in \cite{OSper} and requires the following discussion to state it precisely.

The usual convention that the principal branch of $\log$ has arguments in $(-\pi,\pi]$ is used.
As in  \eqref{as} below, powers of nonzero complex numbers take the corresponding principal value $z^{\tau}:=e^{\tau\log(z)}$
for $\tau \in \C$.

Our contours of integration $\cc$ will lie in a bounded region of $\C$ and be parameterized by
  a continuous function   $c:[0,1]\to \C$ that has a continuous derivative except at a finite number of points.  For any appropriate $f$, integration along the corresponding contour $\cc$ is defined as $\int_\cc f(z)\, dz := \int_0^1 f(c(t))c'(t)\, dt$ in the normal way. 

We make the following assumptions and definitions.

\begin{assume} \label{asma}
We have $\nb$ a neighborhood of $z_0 \in \C$.  Let $\cc$ be  a contour as described above, with $z_0$  a point on it. Suppose $p(z)$ and $q(z)$ are holomorphic functions on a domain containing $\nb \cup \cc$.  We  assume $p(z)$ is not constant and  hence there must exist $\mu \in \Z_{\gqs 1}$ and $p_0 \in \C_{\neq 0}$ so that
\begin{equation}
    p(z)  =p(z_0)+p_0(z-z_0)^\mu(1-\phi(z)) \qquad  (z\in \nb) \label{funp}
\end{equation}
with $\phi$  holomorphic on $\nb$ and $\phi(z_0)=0$.
Let $\omega_0:=\arg(p_0)$ and we will need the {\em steepest-descent angles}
\begin{equation}\label{bisec}
    \theta_\ell := -\frac{\omega_0}{\mu}+\frac{2\pi \ell}{\mu} \qquad (\ell \in \Z).
\end{equation}
We also assume that  $\nb,$ $\cc,$ $p(z),$  $q(z)$ and  $z_0$  are independent of  $N>0$. Finally, let $K(q)$ be a bound for $|q(z)|$ on $\nb \cup \cc$.
\end{assume}

\begin{theorem}[The saddle-point method of Perron] \label{sdle}
Suppose Assumptions \ref{asma} hold and $\mu$ is even. Let $\cc$ be a contour beginning at $z_1$, passing through $z_0$ and ending at $z_2$, with these points all distinct.  Suppose that
\begin{equation*}
   \Re(p(z))>\Re(p(z_0)) \quad \text{for all} \quad z \in \cc, \ z\neq z_0.
\end{equation*}
Let $\cc$ approach $z_0$ in a sector of angular width $2\pi/\mu$ about $z_0$ with bisecting angle $\theta_{k}\pm\pi$, and  initially leave $z_0$ in a sector of the same size with bisecting angle $\theta_{k}$.
 Then for every $M \in \Z_{\gqs 0}$,
\begin{equation*} 
    \int_\cc e^{-N \cdot p(z)} q(z) \, dz = e^{-N \cdot p(z_0)} \left(\sum_{m=0}^{M-1}  \G\left(\frac{2m+1}{\mu}\right) \frac{2\alpha_{2m}(q)   \cdot e^{2\pi i k (2m+1)/\mu}}{N^{(2m+1)/\mu}} + O\left(\frac{K(q)}{N^{(2M+1)/\mu}} \right)  \right)
\end{equation*}
as $N \to \infty$ where the implied constant  is independent of $N$ and $q$. The numbers $\alpha_{s}(q)$ are given by
\begin{equation} \label{as}
    \alpha_s(q) = \frac{1}{\mu \cdot  s!} p_0^{-(s+1)/\mu} \frac{d^s}{dz^s}\left\{q(z) \cdot \left( 1-\phi(z)\right)^{-(s+1)/\mu} \right\}_{z=z_0}.
\end{equation}.
\end{theorem}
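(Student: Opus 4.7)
The plan is to reduce the integral to a model integral of the form $\int e^{-N p_0 w^\mu} w^s\, dw$ by a biholomorphic change of variable near $z_0$, and then extract the asymptotics term by term using the Gamma function. First I would localize: fix a small $\epsilon>0$, write $\cc = \cc_\epsilon \cup \cc^c_\epsilon$ where $\cc_\epsilon$ is the portion of $\cc$ inside a disk of radius $\epsilon$ around $z_0$, and show that the far part contributes only an exponentially smaller error. By compactness and the strict inequality $\Re(p(z)) > \Re(p(z_0))$ on $\cc\setminus\{z_0\}$, there is $\delta>0$ with $\Re(p(z)) \geq \Re(p(z_0)) + \delta$ on $\cc^c_\epsilon$, so $\int_{\cc^c_\epsilon} e^{-N p(z)}q(z)\,dz \ll K(q) e^{-N \Re(p(z_0))} e^{-N\delta}$, which is absorbed into the stated error.

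For the near part I would introduce, after possibly shrinking $\nb$, the change of variable
\begin{equation*}
    w = (z-z_0)\bigl(1-\phi(z)\bigr)^{1/\mu},
\end{equation*}
which is holomorphic and locally biholomorphic at $z_0$ since $\phi(z_0)=0$, and satisfies $p(z)-p(z_0) = p_0 w^\mu$. The hypothesis that $\cc$ enters $z_0$ in a sector of width $2\pi/\mu$ about $\theta_k+\pi$ and leaves in the matching sector about $\theta_k$ means, by a standard deformation argument inside the valley $\{\Re(p(z))>\Re(p(z_0))\}$, that $\cc_\epsilon$ can be deformed to the straight segment $w = t\,e^{i\theta_k}$ with $t \in [-\eta,\eta]$ (the endpoint corrections again being exponentially small). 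On this segment, $p_0 w^\mu = |p_0|\,t^\mu$ by the defining property $\theta_k + \omega_0/\mu = 2\pi k/\mu$, so the exponential becomes the real Gaussian-type weight $e^{-N|p_0| t^\mu}$.

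Writing $g(w) := q(z(w))\,z'(w)$ where $z(w)$ is the inverse of the change of variable, I would Taylor expand $g(t e^{i\theta_k}) = \sum_{s=0}^{2M-1} g^{(s)}(0)(t e^{i\theta_k})^s/s! + R_M(t)$ with $|R_M(t)| \ll K(q)\,t^{2M}$ uniformly on $[-\eta,\eta]$. Substituting and integrating term by term, the odd-$s$ contributions vanish by antisymmetry in $t$ because $\mu$ is even. For $s=2m$ even, the standard computation
\begin{equation*}
    \int_{-\eta}^{\eta} e^{-N|p_0| t^\mu} t^{2m}\, dt = \frac{2}{\mu}\bigl(N|p_0|\bigr)^{-(2m+1)/\mu}\,\G\!\left(\frac{2m+1}{\mu}\right) + O\bigl(e^{-cN}\bigr)
\end{equation*}
yields, after combining the phase $e^{i\theta_k(2m+1)}$ from $(te^{i\theta_k})^{2m}\cdot e^{i\theta_k}\,dt$ with $|p_0|^{-(2m+1)/\mu} = p_0^{-(2m+1)/\mu} e^{i\omega_0(2m+1)/\mu}$, the factor $e^{2\pi i k(2m+1)/\mu}$ that appears in the stated formula. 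Matching the coefficient $g^{(2m)}(0)/(2m)!$ to the expression for $\alpha_{2m}(q)$ in \eqref{as} is a routine application of the chain rule, since $z'(w) = (1-\phi(z))^{-1/\mu} + O(w)$ identifies $(1-\phi(z(w)))^{-(s+1)/\mu}$ as the correct Jacobian-type factor.

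The main obstacle I expect is not any individual step but the uniform control of the remainder $R_M$: one needs the implied constant in $O(K(q)/N^{(2M+1)/\mu})$ to depend linearly on $K(q)$ and not on any further derivative norms of $q$. Cauchy's integral formula applied on a fixed small circle inside $\nb$ converts bounds on $q$ itself into bounds on its derivatives (and hence on the Taylor coefficients of $g$), giving the required linear dependence on $K(q)$ alone. The remaining bookkeeping, checking that the exponentially small errors from the contour deformation and from extending the truncated integral $\int_{-\eta}^{\eta}$ to $\int_{-\infty}^{\infty}$ are indeed absorbed, is routine.
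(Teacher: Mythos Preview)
The paper does not prove Theorem~\ref{sdle} at all: immediately after stating it, the paper says ``Theorem~\ref{sdle} is proved as Corollary 5.1 in \cite{OSper}'', so there is no in-paper proof to compare against. Your sketch is the standard Perron argument (localize, substitute $w=(z-z_0)(1-\phi(z))^{1/\mu}$ so that $p(z)-p(z_0)=p_0 w^\mu$, deform to the steepest-descent ray, expand, and identify coefficients via a residue/Lagrange-inversion computation), and the phase bookkeeping you indicate, $e^{i\theta_k(2m+1)}\cdot |p_0|^{-(2m+1)/\mu}=p_0^{-(2m+1)/\mu}e^{2\pi i k(2m+1)/\mu}$, is correct. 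Your observation that Cauchy's integral formula on a fixed disk in $\nb$ converts the sup bound $K(q)$ into derivative bounds, giving linear dependence of the error on $K(q)$, is exactly the point the paper emphasizes as the ``innovation'' of the cited result. So your outline is sound and is presumably close to what \cite{OSper} does, but strictly speaking there is nothing in the present paper to compare it to.
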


Theorem \ref{sdle} is proved as Corollary 5.1 in \cite{OSper} with the innovation of making the error independent of $q$. Note that \cite{OSper} has $p(z)$ with the opposite sign but all other notation, e.g. $p_0$, is the same.
The numbers $\alpha_{s}(q)$ depend on $p$ and $z_0$, but we have highlighted the dependence  on $q$ since we will be applying Theorem \ref{sdle} with $q$ varying.
Another description of $\alpha_{s}(q)$ may be given in terms of the power series for $p$ and $q$ near $z_0$:
\begin{equation} \label{pspq}
    p(z)-p(z_0)=\sum_{s=0}^\infty p_s (z-z_0)^{s+\mu}, \qquad q(z)=\sum_{s=0}^\infty q_s (z-z_0)^{s}.
\end{equation}
This requires
 the {\em partial ordinary Bell polynomials}, \cite[p. 136]{Comtet}, which are defined with the generating function
\begin{equation} \label{pobell2}
    \left( p_1 x +p_2 x^2+ p_3 x^3+ \cdots \right)^j = \sum_{i=j}^\infty \hat{B}_{i,j}(p_1, p_2, p_3, \dots) x^i.
\end{equation}
Clearly $\hat{B}_{i,0}(p_1, p_2, p_3, \dots)$ is $1$ for $i=0$ and is $0$ for $i \gqs 1$. Also
\begin{equation} \label{pobell3}
    \hat{B}_{i,j}(p_1, p_2, p_3, \dots)= \sum_{n_1+n_2+\dots + n_j = i}
    p_{n_1}p_{n_2} \cdots p_{n_j}
\end{equation}
for $j \gqs 1$ from \cite[p. 156]{CFW} where the sum is over all possible $n_1$, $n_2,  \dots \in \Z_{\gqs 1}$.

\begin{prop} \label{wojf}
For $\alpha_s(q)$ defined in \eqref{as},
\begin{equation} \label{hjw}
    \alpha_s(q) = \frac{1}{\mu} p_0^{-(s+1)/\mu} \sum_{i=0}^s q_{s-i}\sum_{j=0}^i  \binom{-(s+1)/\mu}{j}  \hat B_{i,j}\left(\frac{p_1}{p_0},\frac{p_2}{p_0},\cdots\right).
\end{equation}
\end{prop}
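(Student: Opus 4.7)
The plan is to convert the $s$-th derivative in \eqref{as} into a Taylor coefficient and then expand $(1-\phi(z))^{-(s+1)/\mu}$ using the binomial series together with the generating-function definition \eqref{pobell2} of the partial ordinary Bell polynomials. Since $q(z)(1-\phi(z))^{-(s+1)/\mu}$ is holomorphic on $\nb$, Taylor's theorem gives
\begin{equation*}
\frac{1}{s!}\frac{d^s}{dz^s}\left\{q(z)(1-\phi(z))^{-(s+1)/\mu}\right\}_{z=z_0}
= \bigl[(z-z_0)^s\bigr]\, q(z)(1-\phi(z))^{-(s+1)/\mu},
\end{equation*}
so \eqref{hjw} reduces to computing this Taylor coefficient.

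First I would identify $\phi(z)$ explicitly in terms of the $p_k$. Comparing the factorization \eqref{funp} with the series expansion \eqref{pspq} of $p(z)-p(z_0)$ yields
\begin{equation*}
p_0(1-\phi(z))=\sum_{k=0}^\infty p_k(z-z_0)^k,
\qquad\text{hence}\qquad
-\phi(z) = \sum_{k=1}^\infty \frac{p_k}{p_0}(z-z_0)^k.
\end{equation*}
The generalized binomial series then gives
\begin{equation*}
(1-\phi(z))^{-(s+1)/\mu}
= \sum_{j=0}^\infty \binom{-(s+1)/\mu}{j}\bigl(-\phi(z)\bigr)^j,
\end{equation*}
and by the defining relation \eqref{pobell2},
\begin{equation*}
\bigl(-\phi(z)\bigr)^j
= \left(\sum_{k=1}^\infty \frac{p_k}{p_0}(z-z_0)^k\right)^j
= \sum_{i=j}^\infty \hat B_{i,j}\!\left(\frac{p_1}{p_0},\frac{p_2}{p_0},\dots\right)(z-z_0)^i.
\end{equation*}
Exchanging the order of summation (which is legitimate in a neighborhood of $z_0$, where every series converges absolutely) collapses this into
\begin{equation*}
(1-\phi(z))^{-(s+1)/\mu}
= \sum_{i=0}^\infty (z-z_0)^i \sum_{j=0}^i \binom{-(s+1)/\mu}{j}\hat B_{i,j}\!\left(\frac{p_1}{p_0},\frac{p_2}{p_0},\dots\right),
\end{equation*}
using the fact that $\hat B_{i,j}=0$ for $j>i$.

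Finally I would multiply by the series $q(z)=\sum_k q_k(z-z_0)^k$ and read off the coefficient of $(z-z_0)^s$; it equals $\sum_{i=0}^s q_{s-i}\sum_{j=0}^i \binom{-(s+1)/\mu}{j}\hat B_{i,j}(p_1/p_0,p_2/p_0,\dots)$. Multiplying this Taylor coefficient by the prefactor $\tfrac{1}{\mu}p_0^{-(s+1)/\mu}$ appearing in \eqref{as} (after absorbing the $1/s!$) produces \eqref{hjw}. There is no real analytic obstacle — every step is a convergent formal manipulation in a neighborhood of $z_0$; the only thing requiring any care is the bookkeeping that correctly identifies the coefficients of $\phi(z)$ with $-p_k/p_0$ (the sign flip coming from writing the expansion with $1-\phi$ rather than $1+\phi$), after which the Bell-polynomial identity is simply a repackaging of the multinomial expansion of the $j$-th power of a power series with no constant term.
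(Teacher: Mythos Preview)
Your argument is correct: you correctly identify $-\phi(z)=\sum_{k\geq 1}(p_k/p_0)(z-z_0)^k$ from \eqref{funp} and \eqref{pspq}, expand $(1-\phi)^{-(s+1)/\mu}$ by the binomial series, rewrite each $(-\phi)^j$ via \eqref{pobell2}, and then take the Cauchy product with $q$; this is exactly the standard derivation. The paper itself does not give a proof of this proposition but only cites \cite{CFW} and \cite[Prop.~7.2]{OSper}, so your write-up is in fact more detailed than what appears here.
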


Proposition \ref{wojf} is due to Campbell,  Fr{\"o}man and Walles \cite[pp. 156--158]{CFW}. The above formulation is proved in \cite[Prop. 7.2]{OSper}. Wojdylo also rediscovered this formula; see the references in \cite{OSper}. The following result is \cite[Prop. 7.3]{OSper}.

\begin{prop} \label{albnd}
With  Assumptions \ref{asma} and $\alpha_s(q)$ defined in \eqref{as},
\begin{equation} \label{albnd2}
    \alpha_{s}(q) = O\bigl( K^*(q) \cdot C^s \bigr) \quad \text{for} \quad s \in \Z_{\gqs 0}
\end{equation}
where $K^*(q)$ is a bound for  $|q(z)|$ on $\nb$. The positive constant $C$ and the implied constant in \eqref{albnd2} are both independent of $q$ and $s$.
\end{prop}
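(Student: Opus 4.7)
The plan is to bound $\alpha_s(q)$ directly from the closed form \eqref{as} by writing the $s$-th derivative as a Cauchy integral, rather than wrestling with the Bell-polynomial identity \eqref{hjw}. Since $\phi$ is holomorphic on $\nb$ with $\phi(z_0)=0$, I would first shrink to a closed disk $\overline{D}(z_0,r)$ contained in $\nb$ on which $|\phi(z)|\lqs 1/2$; the radius $r$ depends only on $\phi$ and $\nb$, which are fixed by Assumptions \ref{asma}. On this disk $1-\phi(z)$ stays in the open half-plane $\Re(w)>0$, so $(1-\phi(z))^{-(s+1)/\mu}$ is single-valued and holomorphic via the principal branch of the logarithm, and satisfies the uniform bound $|(1-\phi(z))^{-(s+1)/\mu}|\lqs 2^{(s+1)/\mu}$ there.

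Next I would apply Cauchy's integral formula on the circle $|z-z_0|=r$:
\begin{equation*}
 \frac{1}{s!}\frac{d^s}{dz^s}\bigl\{q(z)(1-\phi(z))^{-(s+1)/\mu}\bigr\}_{z=z_0}
 =\frac{1}{2\pi i}\oint_{|z-z_0|=r}\frac{q(z)\bigl(1-\phi(z)\bigr)^{-(s+1)/\mu}}{(z-z_0)^{s+1}}\,dz,
\end{equation*}
whose modulus is at most $K^*(q)\cdot 2^{(s+1)/\mu}/r^s$ by the trivial estimate. Substituting into \eqref{as} yields
\begin{equation*}
 |\alpha_s(q)|\lqs \frac{1}{\mu}|p_0|^{-(s+1)/\mu}\cdot\frac{K^*(q)\cdot 2^{(s+1)/\mu}}{r^s}
 =\frac{K^*(q)}{\mu}\left(\frac{2}{|p_0|}\right)^{1/\mu}\left(\frac{(2/|p_0|)^{1/\mu}}{r}\right)^{s},
\end{equation*}
so setting $C:=(2/|p_0|)^{1/\mu}/r$ gives the required geometric growth, with an implied constant $(2/|p_0|)^{1/\mu}/\mu$ that depends on neither $q$ nor $s$.

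The one step that needs care is the uniform choice of $r$: one must verify that the radius can be fixed independently of both the index $s$ and the function $q$. This is immediate in the present setting, because $\phi$ is determined by $p$ via \eqref{funp}, while $p$, $\nb$, $z_0$ and $\mu$ are all fixed by Assumptions \ref{asma} before $q$ or $s$ enter the picture; only the outer factor $q(z)$ and the non-integer exponent $(s+1)/\mu$ depend on $q$ and $s$, and neither interferes with the geometric setup. The remaining estimates are routine, and no appeal to the power series \eqref{pspq} or to Proposition \ref{wojf} is necessary.
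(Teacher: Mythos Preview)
Your argument is correct: shrinking to a disk on which $|\phi|\lqs 1/2$, applying Cauchy's integral formula for the $s$-th derivative, and reading off the geometric growth in $s$ is exactly the right approach, and all dependencies on $q$ and $s$ are tracked cleanly. The paper does not give its own proof of this proposition but simply cites \cite[Prop.~7.3]{OSper}; your self-contained Cauchy-estimate argument is the natural proof and almost certainly what the cited reference does as well.
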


\subsection{The dilogarithm} \label{dise}
As described in  \cite{max}, \cite{Zag07}, \cite{OS3} for example, the dilogarithm is
initially defined as
\begin{equation}\label{def0}
\li(z):=\sum_{n=1}^\infty \frac{z^n}{n^2} \quad \text{ for }|z|\lqs 1,
\end{equation}
with an analytic continuation given by
$
 -\int_{0}^z \log(1-u)/u \, du
$. This makes the dilogarithm a multi-valued holomorphic function with a branch points at $1,$ $\infty$ (and off the principal branch another branch point at $0$).
 We let $\li(z)$ denote the dilogarithm on its principal branch so that $\li(z)$ is a single-valued holomorphic  function on $\C-[1,\infty)$.

We may describe $\li(z)$ for $z$ on the unit circle as
\begin{alignat}{2}
    \Re(\li(e^{2\pi i x}) ) & = \sum_{n=1}^\infty \frac{\cos(2\pi n x)}{n^2} = \pi^2 B_2(x-\lfloor x \rfloor) \qquad & & (x\in \R), \label{reli}\\
    \Im(\li(e^{2\pi i x}) ) & = \sum_{n=1}^\infty \frac{\sin(2\pi n x)}{n^2} = \cl(2\pi x) \qquad & & (x\in \R) \label{imc}
\end{alignat}
where $B_2(x):=x^2-x+1/6$ is the second  Bernoulli polynomial and
\begin{equation}\label{simo}
\cl(\theta):=-\int_0^\theta \log |2\sin( x/2) | \, dx \qquad (\theta \in \R)
\end{equation}
is Clausen's integral.
Note that $\li(1)=\zeta(2)=\pi^2/6$.
The graph of  $\cl(\theta)$ resembles a slanted sine wave - see \cite[Fig. 1]{OS3}, for example. Combine \eqref{reli} and \eqref{imc} to get, for $z\in \R$ with $m\lqs z \lqs m+1, m\in \Z$,
\begin{equation}\label{dli}
\cl\left(2\pi  z\right) = -i \li\left( e^{2\pi i z}\right)+i\pi^2\left(z^2-(2m+1)z+m^2+m+1/6 \right).
\end{equation}
Then  the right of \eqref{dli} gives the continuation of $\cl\left(2\pi  z\right)$ to $z \in \C$ with $m<\Re(z)<m+1$.

  As $z$ crosses the branch cuts  the dilogarithm enters new branches. From \cite[Sect.  3]{max}, the value of the analytically continued dilogarithm is always given by
\begin{equation}\label{dilogcont}
\li(z) + 4\pi^2  A +   2\pi i  B  \log \left(z\right)
\end{equation}
for some $A$, $B \in \Z$.

The saddle-points we need in our asymptotic calculations are closely related to zeros of the analytically continued dilogarithm and in \cite{OS3} we have made a study of  its zeros on every branch. When the continued dilogarithm takes the form \eqref{dilogcont} with $B=0$, there will be a  zero if and only if $A\gqs 0$ and, for each such $A$, the zero will be unique and lie on the real line. The cases we will require have $B\neq 0$. In these cases there are no real zeros so we may avoid the branch cuts and look for solutions to
\begin{equation}\label{dilogzero}
\li(z) + 4\pi^2  A +   2\pi i  B  \log \left(z\right)=0 \qquad (z\in \C, z \not\in (-\infty,0] \cup [1,\infty), \ A,B \in \Z).
\end{equation}
The next result is shown in Theorems 1.1 and 1.3 of \cite{OS3}.

\begin{theorem} \label{dilab}
For nonzero $B \in \Z$, \eqref{dilogzero} has solutions  if and only if $-|B|/2<A\lqs |B|/2$. For such a pair $A,B$ the solution $z$ is unique.
This unique solution, $w(A,B)$,  may be found to arbitrary precision using Newton's method.
\end{theorem}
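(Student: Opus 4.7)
Set $F(z) := \li(z) + 4\pi^2 A + 2\pi i B \log z$, holomorphic on the simply connected slit domain $\Omega := \C \setminus ((-\infty, 0] \cup [1,\infty))$. Complex conjugating the equation $F(z) = 0$, together with $\overline{\li(z)}=\li(\bar z)$ and $\overline{\log z}=\log\bar z$ on the principal branches, sends $(A,B)$-solutions to $(A,-B)$-solutions. So I may assume $B>0$ and must establish existence and uniqueness precisely for $-B/2 < A \le B/2$.

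Differentiating gives
\[ F'(z) = \frac{2\pi i B - \log(1-z)}{z}. \]
For $z \in \Omega$ we have $1-z \notin (-\infty,0]$, so $|\Im \log(1-z)| < \pi \le 2\pi B$ whenever $B \neq 0$; hence $F'$ is nowhere zero on $\Omega$. Every zero of $F$ is therefore simple, which both feeds into the zero-counting argument and guarantees the quadratic convergence of Newton's method from a sufficiently good initial guess.

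To count zeros I would apply the argument principle on a contour $\Gamma_{R,\varepsilon} \subset \Omega$ that hugs each slit on both sides, closed off by small circles of radius $\varepsilon$ about $z=0$ and $z=1$ and by a large circle of radius $R$: the number of zeros of $F$ inside equals the winding number of $F(\Gamma_{R,\varepsilon})$ about the origin as $\varepsilon \to 0$ and $R \to \infty$. The boundary trace is computable in closed form using the principal-branch jumps $\log(z \pm i0) = \log|z| \pm i\pi$ on $(-\infty,0)$, the analogous jumps of $\log(1-z)$ across $[1,\infty)$, the standard dilogarithm monodromy $\li(z+i0) - \li(z-i0) = 2\pi i \log z$ for $z>1$, and the limiting behaviour $F(z) \sim 2\pi i B \log z \to \infty$ near $z=0$ together with the inversion asymptotic $\li(z) \sim -\tfrac12 (\log(-z))^2$ as $|z| \to \infty$. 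Reading off the winding number as an integer count, I expect to find exactly the number of integers in $(-B/2, B/2]$, namely $B$, which yields both the sufficient range and uniqueness of the zero $w(A,B)$ within it.

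The main obstacle is this winding-number bookkeeping: six contour pieces (four slit sides plus the two small circles), consistent principal-branch conventions for $\log$ and $\li$ throughout, and extracting the exact integer so as to distinguish the strict inequality $-B/2 < A$ from the non-strict $A \le B/2$ that comes from the endpoint contributions of $\Gamma_{R,\varepsilon}$. An initial seed for Newton's method can then be produced by homotoping continuously in the parameter $A$ at fixed $B$ from an easily handled case, with simplicity of each zero ensuring the iteration remains well-defined and converges quadratically.
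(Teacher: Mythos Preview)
The paper does not prove this theorem; it simply cites Theorems~1.1 and~1.3 of \cite{OS3}. So there is no in-paper argument to compare your proposal against.

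Your outline via the argument principle is a natural strategy, and the observation that $F'$ never vanishes on $\Omega$ (because $|\Im\log(1-z)|<\pi<2\pi|B|$) is correct and does yield simplicity of any zero, hence local quadratic convergence of Newton's method. But the proposal is only a sketch: the six-piece boundary computation that actually determines the winding number is not carried out, and you yourself flag this as ``the main obstacle''.

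There is also a genuine confusion in the sentence ``I expect to find exactly the number of integers in $(-B/2, B/2]$, namely $B$''. For a \emph{fixed} pair $(A,B)$ the argument principle returns a single non-negative integer, the number of zeros of $F$ inside $\Gamma_{R,\varepsilon}$; what you must show is that this integer equals $1$ when $-B/2 < A \le B/2$ and $0$ otherwise. The count $B$ could only arise by summing over the $B$ admissible integer values of $A$, which is not what a single application of the argument principle gives. Until the boundary integrals are actually evaluated and shown to produce this $0$/$1$ dichotomy---with the correct strict versus non-strict inequalities at $A=\pm B/2$ read off from the endpoint contributions---the argument remains incomplete. The homotopy-in-$A$ idea for seeding Newton's method likewise presupposes you already know there is a unique simple zero to track, so it cannot substitute for the missing count.
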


 By conjugating \eqref{dilogzero} it is clear that
\begin{equation*}
    w(A,-B) = \overline{w(A,B)}.
\end{equation*}
So for nonzero $B$ the first zeros are $w(0,1)$ and its conjugate $w(0,-1)$.   We have
\begin{equation*}\label{w0-1}
    w(0,-1) \approx \phantom{-}0.9161978162 - 0.1824588972 i
\end{equation*}
and this zero was denoted by $w_0$ in Section \ref{maru}.
We will also need
\begin{align*} 
    w(0,-2) &\approx \phantom{-}0.9684820460-0.1095311065 i, \\
    w(1,-3) &\approx -0.4594734813-0.8485350380 i. 
\end{align*}

Define
\begin{equation}\label{pdfn}
    p_d(z):=\frac{ - \li\left(e^{2\pi i z}\right) +\li(1) +4\pi^2 d}{2\pi i z},
\end{equation}
a single-valued holomorphic function away from the vertical branch cuts $(-i\infty,n]$ for $n \in \Z$. Its first derivatives are
\begin{align}\label{pzzz}
    p_d'(z) & =-\frac 1z \left(p_d(z)-\log \left(1-e^{2\pi i z}\right) \right),\\
    p_d''(z) & =-\frac 1z \left(2 p_d'(z) + \frac{2\pi i \cdot e^{2\pi i z}}{1-e^{2\pi i z}} \right). \label{pzzz2}
\end{align}
The last result in this section is \cite[Thm. 2.4]{OS1} and identifies the saddle-points we will need.

\begin{theorem} \label{disol}
Fix integers $m$ and $d$ with $-|m|/2<d\lqs |m|/2$. Then there is a unique solution to $p_d'(z)=0$ for $z \in \C$ with $m-1/2<\Re(z)<m+1/2$ and $z \not\in (-i\infty,m]$. Denoting this solution by $z^*$, it is given by
\begin{equation}\label{uniq}
    z^*=m+ \log \bigl(1-w(d,-m)\bigr)/(2\pi i)
\end{equation}
and satisfies
\begin{equation} \label{pzlogw}
    p_d(z^*)=\log \bigl(w(d,-m)\bigr).
\end{equation}
\end{theorem}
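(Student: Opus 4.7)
The plan is to reduce the equation $p_d'(z) = 0$ on the prescribed domain to the dilogarithm zero equation \eqref{dilogzero} with $(A,B) = (d,-m)$ via the substitution $w = 1 - e^{2\pi i z}$, then invoke Theorem \ref{dilab}. Note first that the hypothesis $-|m|/2 < d \lqs |m|/2$ forces $m \neq 0$, so $z = 0$ lies outside the strip and $p_d, p_d'$ are holomorphic there; by \eqref{pzzz}, the equation $p_d'(z) = 0$ is equivalent to
\begin{equation*}
p_d(z) = \log\bigl(1 - e^{2\pi i z}\bigr).
\end{equation*}

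Next I would verify that $z \mapsto w := 1 - e^{2\pi i z}$ is a biholomorphism from the set $\{z \in \C : m - 1/2 < \Re(z) < m + 1/2,\ z \notin (-i\infty, m]\}$ onto the slit plane $\C \setminus ((-\infty, 0] \cup [1, \infty))$. Indeed $z \mapsto e^{2\pi i z}$ sends the open strip bijectively onto $\C \setminus (-\infty, 0]$, and excluding the downward ray $z = m - it$ ($t \gqs 0$), which maps to $e^{2\pi t} \in [1, \infty)$, excises exactly $[1, \infty)$ from the image of $w$. The inverse is $z = m + \log(1 - w)/(2\pi i)$ using the principal branch; the bound $\Im(\log(1 - w)) \in (-\pi, \pi)$ keeps $\Re(z)$ strictly inside $(m - 1/2, m + 1/2)$.

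Substituting $e^{2\pi i z} = 1 - w$ and $2\pi i z = 2\pi i m + \log(1 - w)$ into the displayed equation and clearing the denominator yields
\begin{equation*}
-\li(1 - w) + \li(1) + 4\pi^2 d = \bigl(2\pi i m + \log(1 - w)\bigr) \log w.
\end{equation*}
The reflection identity $\li(w) + \li(1 - w) = \li(1) - \log(w) \log(1 - w)$, valid on the principal branch throughout this slit plane, rewrites the left side as $\li(w) + \log(w) \log(1 - w) + 4\pi^2 d$. The $\log(w) \log(1 - w)$ terms cancel and what remains is $\li(w) + 4\pi^2 d - 2\pi i m \log w = 0$, which is precisely \eqref{dilogzero} with $A = d$, $B = -m$. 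Theorem \ref{dilab} then supplies, under the hypothesis, the unique solution $w = w(d, -m)$ in the slit plane; pulling back through the inverse substitution yields the unique $z^*$ of \eqref{uniq}, and $p_d(z^*) = \log(w(d, -m))$ follows at once from the reformulation of $p_d'(z^*) = 0$. The main delicate step — essentially the only real obstacle — is the branch bookkeeping: confirming the slit-plane biholomorphism and checking that the reflection identity applies in its principal-branch form without any extra monodromy correction of the shape \eqref{dilogcont}.
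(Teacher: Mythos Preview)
Your argument is correct. The present paper does not prove this statement here; it is quoted as \cite[Thm.~2.4]{OS1}, so there is no in-paper proof to compare against line by line. That said, your approach is exactly the expected one: rewrite $p_d'(z)=0$ via \eqref{pzzz} as $p_d(z)=\log(1-e^{2\pi i z})$, use the conformal change of variable $w=1-e^{2\pi i z}$ to identify the strip-minus-ray with the doubly slit plane, and then apply the Euler reflection formula $\li(w)+\li(1-w)=\li(1)-\log(w)\log(1-w)$ on principal branches to reduce to \eqref{dilogzero} with $(A,B)=(d,-m)$, whence Theorem~\ref{dilab} gives existence and uniqueness and \eqref{uniq}, \eqref{pzlogw} drop out. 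The branch bookkeeping you flag is indeed routine on the doubly slit plane: all four of $\li(w),\li(1-w),\log w,\log(1-w)$ are single-valued holomorphic there, and the reflection identity holds throughout by analytic continuation from $(0,1)$.
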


\section{Proof of Theorem \ref{thma}} \label{klmx}

\begin{proof}[Proof of Theorem \ref{thma}] We require a modification of the proof of \cite[Thm 1.6]{OS1}.
The following should be read alongside Sections 2-5 of \cite{OS1} where there are more details.
First define
\begin{equation}\label{grz}
g_\ell(z):=-\frac{B_{2\ell}}{(2\ell)!} \left( \pi z\right)^{2\ell-1} \cot^{(2\ell-2)}\left(\pi z\right).
\end{equation}
We approximate the reciprocal of the sine product $\spn{\theta}{m}$, as defined in \eqref{sidef}, with \cite[Thm. 4.1]{OS1}, based on Euler-Maclaurin summation. A special case, see \cite[Cor. 4.2]{OS1}, shows
\begin{equation}\label{ott1}
    \spr{1/k}{N-k} = O\left(e^{0.05N}\right) \quad \text{ for } \quad  \frac Nk \in [1,1.01] \cup  [1.49 ,2)
\end{equation}
and
\begin{multline} \label{ott2}
    \spr{1/k}{N-k} =  \left(\frac{N/k}{2N \sin (\pi (N/k-1))}\right)^{1/2}  \exp\left(\frac N{2\pi N/k} \cl\bigl(2\pi  N/k \bigr) \right) \\
      \quad \times
      \exp\left(\sum_{\ell=1}^{L-1} \frac{g_{\ell}(N/k)}{N^{2\ell-1}} \right) +O\left(e^{0.05N}\right)
        \quad \text{ for } \quad \frac Nk \in  (1.01, 1.49)
  \end{multline}
with $L=\lfloor 0.006\pi e \cdot N\rfloor$. The implied constants in \eqref{ott1}, \eqref{ott2} are absolute. We may combine \eqref{cj1}, \eqref{ott1} and \eqref{ott2} by first making the following definitions:
\begin{align}
p(z) & :=\frac{\li(1) - \li\left(e^{2\pi i z}\right)}{2\pi i z},\label{p(z)} \\
q(z) & := \left( \frac{z }{2\sin(\pi(z -1))}\right)^{1/2} \exp(-\pi i z/2), \label{qz}\\
v(z;N,\sigma)  & := \frac{2\pi i \sigma z}{N} + \sum_{\ell=1}^{L-1} \frac{g_{\ell}(z)}{N^{2\ell-1}}, \qquad \quad (L=\lfloor 0.006\pi e \cdot N \rfloor). \label{vz}
\end{align}
We used \eqref{dli} with $m=1$ to convert Clausen's integral into the dilogarithm in \eqref{p(z)}. (Note that all the occurrences of the function $p$ in this section refer to \eqref{p(z)} and not the partition function.) Then with $\hat{z}=\hat{z}(N,k):=N/k$  define
\begin{equation} \label{a2ns}
    \mathcal A_2(N,\sigma) := \frac 2{N^{1/2}} \Im  \sum_{ k \ : \ \hat{z} \in  (1.01, 1.49)} \frac{(-1)^{k}}{k^2}  \exp\bigl(-N  (p(\hat{z})-\pi i/\hat{z}) \bigr) q\left(\hat{z} \right) \exp\bigl(v \left(\hat{z};N,\sigma \right) \bigr)
\end{equation}
where the index notation means we are summing over all integers $k$ such that $1.01<N/k<1.49$.
It follows, as in \cite[Eq. (4.11)]{OS1}, that for any $\sigma \in \Z$ and an absolute implied constant we have
\begin{equation}\label{a1n}
\mathcal A_2(N,\sigma) = \mathcal A_1(N,\sigma) +O(e^{0.05N}).
\end{equation}
 When $\sigma = -\lambda N$ we may remove the $\exp(-2\pi i \lambda z)$ factor from $\exp\bigl(v \left(z;N,\sigma \right) \bigr)$ and include  it in the new function
\begin{equation}
    f_{\mathcal A,\lambda}(z)  := q(z) \exp(-2\pi i \lambda z ) \label{rz}
\end{equation}
to get
\begin{equation} \label{incheg}
    \mathcal A_2(N,-\lambda N) = \frac 2{N^{1/2}} \Im  \sum_{ k \ : \ \hat{z} \in  (1.01, 1.49)} \frac{(-1)^{k}}{k^2}  \exp\bigl(-N  (p(\hat{z})-\pi i/\hat{z}) \bigr) f_{\mathcal A,\lambda}(\hat{z}) \exp\bigl(v \left(\hat{z};N,0 \right) \bigr).
\end{equation}
It is shown in \cite[Prop. 4.7]{OS1} that $q(z)$ and $\exp\bigl(v \left(z;N,0 \right)\bigr)$ are holomorphic and absolutely bounded on a domain containing the box $\mathbb B_1$ for
\begin{equation}\label{box}
    \mathbb B_m := \{z\in \C \ : \ m+0.01\lqs \Re(z) \lqs m+0.49, \ -1 \lqs \Im(z) \lqs 1\}.
\end{equation}
Since $|\exp(-2\pi i \lambda z )| \lqs \exp(\lambda^+ 2\pi   |z| )$ it follows that
\begin{equation}\label{soota}
    f_{\mathcal A,\lambda}(z)  \exp\bigl(v \left(z;N,0 \right)\bigr) \ll 1 \quad \text{for} \quad z \in \mathbb B_1
\end{equation}
with an implied constant depending only on $\lambda^+$.
The proof of \cite[Thm. 4.3]{OS1} now goes through unchanged, except that we have  $\sigma=0$ and $f_{\mathcal A,\lambda}$ instead of $q$. This theorem allows us
to replace the  sum \eqref{incheg} by the integral
\begin{equation}
    \mathcal A_3(N,-\lambda N)  :=  \frac{2}{N^{3/2}} \Im \int_{1.01}^{1.49} e^{-N \cdot p(z)}  f_{\mathcal A,\lambda}(z) \cdot \exp\bigl(v(z;N,0)\bigr) \, dz \label{a3(n)}
\end{equation}
where, for an implied constant depending only on  $\lambda^+$ (coming from the bound \eqref{soota}),
\begin{equation}\label{a23n}
\mathcal A_3(N,-\lambda N) = \mathcal A_2(N,-\lambda N) +O(e^{0.05N}).
\end{equation}


\SpecialCoor
\psset{griddots=5,subgriddiv=0,gridlabels=0pt}
\psset{xunit=1cm, yunit=0.6cm}
\psset{linewidth=1pt}
\psset{dotsize=4pt 0,dotstyle=*}

\newrgbcolor{darkbrn}{0.7 0.2 0.7}

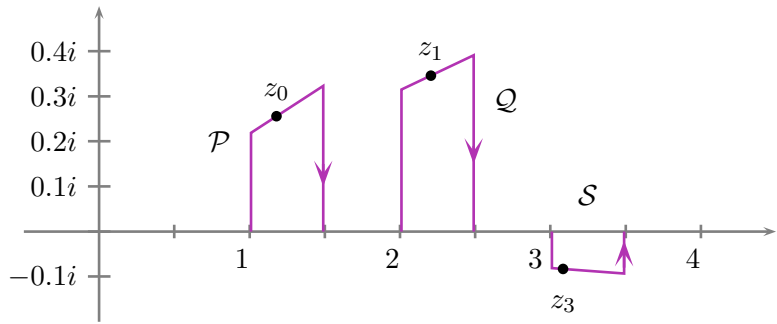
\begin{figure}[ht]
\begin{center}
\begin{pspicture}(-1,-2)(9,5) 

\psline[linecolor=gray]{->}(0,-2)(0,5)
\psline[linecolor=gray]{->}(-1,0)(9,0)

\multirput(-0.15,-1)(0,1){6}{\psline[linecolor=gray](0,0)(0.3,0)}
\multirput(1,-0.15)(1,0){8}{\psline[linecolor=gray](0,0)(0,0.3)}

\psset{arrowscale=2,arrowinset=0.5}
\psline[linecolor=darkbrn]{->}(2.98,1.5)(2.98,1)
\psline[linecolor=darkbrn](2.02,0)(2.02,2.19)(2.98,3.23)(2.98,0)
\psdots(2.36,2.56)
\rput(2.36,3.1){$z_0$}

\psline[linecolor=darkbrn]{->}(4.98,1.9)(4.98,1.5)
\psline[linecolor=darkbrn](4.02,0)(4.02,3.15)(4.98,3.91)(4.98,0)
\psdots(4.41,3.46)
\rput(4.41,4){$z_1$}

\psline[linecolor=darkbrn](6.02,0)(6.02,-0.81)(6.98,-0.93)(6.98,0)
\psline[linecolor=darkbrn]{->}(6.98,-0.8)(6.98,-0.2)
\psdots(6.17,-0.83)
\rput(6.17,-1.6){$z_3$}

\rput(1.9,-0.6){$1$}
  \rput(3.9,-0.6){$2$}
  \rput(5.8,-0.6){$3$}
  \rput(7.9,-0.6){$4$}
  \rput(1.6,2){$\mathcal P$}
  \rput(5.4,2.9){$\mathcal Q$}
  \rput(6.5,0.8){$\mathcal S$}

\rput(-0.6,1){$0.1i$}
\rput(-0.6,2){$0.2i$}
\rput(-0.6,3){$0.3i$}
\rput(-0.6,4){$0.4i$}
\rput(-0.75,-1){$-0.1i$}

\end{pspicture}
\caption{The paths of integration $\mathcal P$, $\mathcal Q$ and $\mathcal S$}\label{pthpqs}
\end{center}
\end{figure}
%
%

The form of \eqref{a3(n)} allows us to find its asymptotic expansion using the saddle-point method as was done in \cite{OS1}.
We have seen in Theorem \ref{disol} that $p'(z)=0$ has a unique solution for $0.5<\Re(z)<1.5$ given by $z=1+\log \bigl(1-w(0,-1)\bigr)/(2\pi i)$. (The function $p(z)$ is the case $d=0$ of $p_d(z)$.) In the notation of \eqref{w0x} we write $w_0=w(0,-1)$ and $z_0$ is the saddle-point $1+\log(1-w_0)/(2\pi i)$.

In the notation of Assumptions \ref{asma} and Theorem \ref{sdle}, we find $\mu=2$, $p_0\approx 0.504-0.241i$ and the steepest-descent angles are $\theta_0\approx 0.223$ and $\theta_1=\pi+\theta_0$.
Let $c:=1+i\Im(z_0)/\Re(z_0)$. We move the path of integration in \eqref{a3(n)} to the path $\mathcal P$ consisting of the straight line segments joining the points $1.01,$  $1.01c,$  $1.49c$ and $1.49$. This path passes through $z_0$ as shown in Figure \ref{pthpqs}. Since the integrand in \eqref{a3(n)} is holomorphic on a domain containing $\mathbb B_1$, Cauchy's theorem ensures that the integral remains the same under this change of path.
 It is proved in \cite[Thm. 5.2]{OS1} that
\begin{equation}\label{aqw}
    \Re(p(z)-p(z_0))>0 \quad \text{for all} \quad z \in \mathcal P, \ z \neq z_0.
\end{equation}
Recall from \eqref{abuv} and \eqref{pzlogw} that
\begin{equation} \label{adzx}
 e^{-p(z_0)} = w_0^{-1} \quad \text{and} \quad   e^{-\Re(p(z_0))} = |w_0|^{-1} \approx e^{0.068}.
\end{equation}
To apply the saddle-point method we state one further result, which is \cite[Prop. 5.8]{OS1}.
Set $u_{0,0}:=1$ and for $j \in \Z_{\gqs 1}$ put
\begin{equation} \label{uiz}
    u_{0,j}(z):=\sum_{m_1+3m_2+5m_3+ \dots =j}\frac{g_1(z)^{m_1}}{m_1!}\frac{g_2(z)^{m_2}}{m_2!} \cdots \frac{g_j(z)^{m_j}}{m_j!}.
\end{equation}

\begin{prop} \label{gas}
There are  functions $u_{0,j}(z)$ (defined above) and $\zeta_d(z;N,0)$ which are holomorphic on a domain containing the box $\mathbb B_1$ and have the following property. For all $z \in \mathbb B_1$,
\begin{equation*}
    \exp\bigl(v(z;N,0)\bigr) = \sum_{j=0}^{d-1} \frac{u_{0,j}(z)}{N^j} + \zeta_d(z;N,0) \quad \text{for} \quad \zeta_d(z;N,0) = O\left(\frac{1}{N^d} \right)
\end{equation*}
with an implied constant depending only on  $d$ where $1 \lqs d \lqs 2L-1$ and $L=\lfloor 0.006 \pi e \cdot N \rfloor$.
\end{prop}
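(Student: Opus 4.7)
The plan is to expand $\exp(v(z;N,0))$ as a multivariate power series in the monomials $g_\ell(z)/N^{2\ell-1}$ and then collect coefficients by the total power of $1/N$. Writing
\[
\exp(v(z;N,0)) = \prod_{\ell=1}^{L-1}\sum_{m_\ell=0}^\infty \frac{g_\ell(z)^{m_\ell}}{m_\ell!\, N^{m_\ell(2\ell-1)}}
\]
and regrouping by $j := \sum_\ell m_\ell(2\ell-1)$, one recovers exactly the coefficients $u_{0,j}(z)$ of \eqref{uiz} for each $j \lqs 2L-2$: indeed, any tuple $(m_\ell)_\ell$ contributing to such a $j$ must satisfy $m_\ell = 0$ whenever $\ell \gqs L$, else $j \gqs 2L-1$. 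Accordingly, define $\zeta_d(z;N,0) := \exp(v(z;N,0)) - \sum_{j=0}^{d-1} u_{0,j}(z)/N^j$ for $1 \lqs d \lqs 2L-1$. Each $g_\ell$ is holomorphic on any open set avoiding $\Z$, and $\mathbb B_1$ has $\Re(z) \in [1.01,1.49]$, so there is a common open neighborhood of $\mathbb B_1$ on which all the $g_\ell$ are holomorphic; the $u_{0,j}$ are finite polynomial combinations of $g_1,\ldots,g_{\lfloor (j+1)/2 \rfloor}$, so they and $\zeta_d$ inherit this holomorphy.

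The main ingredient for the error bound is a uniform geometric estimate $|g_\ell(z)| \lqs M \rho^{2\ell-1}$ on $\mathbb B_1$, with fixed $M,\rho > 0$ independent of $\ell$. This follows by combining the classical bound $|B_{2\ell}|/(2\ell)! = O((2\pi)^{-2\ell})$ with Cauchy's inequality applied to $\cot(\pi z)$ on a small disc of fixed radius centered at each point of $\mathbb B_1$, on which $\cot(\pi z)$ is uniformly bounded. Granted such an estimate, the formal generating-function identity
\[
\sum_{j=0}^\infty u_{0,j}(z)\, x^j = \exp\left(\sum_{\ell=1}^\infty g_\ell(z)\, x^{2\ell-1}\right)
\]
converges absolutely on some disc $|x| < R$ with $R > 0$ independent of $z \in \mathbb B_1$, yielding $|u_{0,j}(z)| \lqs M' R^{-j}$ uniformly on $\mathbb B_1$.

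Finally, split $\zeta_d$ into two pieces: the series-tail $\sum_{j \gqs d} u_{0,j}(z)/N^j$, which is a geometric series of size $O(N^{-d})$ once $N > 1/R$; and the contribution from tuples in the expansion of $\exp(v)$ that include some $m_\ell \gqs 1$ with $\ell \gqs L$, which carries a factor $1/N^j$ with $j \gqs 2L-1 \gqs d$ and is absorbed into the same $O(N^{-d})$ estimate. Bounded $N$ are handled trivially by the triangle inequality, so the implied constant depends only on $d$ (through $R^{-d}$) and the absolute constants $M,M',R$. The main obstacle is establishing the geometric bound on $|g_\ell(z)|$ uniformly in $\ell$; without it the sum defining $u_{0,j}$ admits no useful bound as $j$ grows. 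Once that bound is in hand, the rest reduces to straightforward comparison with a convergent geometric series and careful bookkeeping of the two sources of error.
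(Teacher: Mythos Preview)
The paper does not prove this proposition here; it is quoted as \cite[Prop. 5.8]{OS1}. Your overall plan (expand the exponential and regroup by powers of $1/N$) is the natural one, but the key analytic input you claim is false, and this is a genuine gap.

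You assert a uniform geometric bound $|g_\ell(z)| \lqs M\rho^{2\ell-1}$ on $\mathbb B_1$, to be obtained from $|B_{2\ell}|/(2\ell)! = O((2\pi)^{-2\ell})$ together with Cauchy's inequality for $\cot(\pi z)$. But Cauchy's inequality gives $|\cot^{(2\ell-2)}(\pi z)| \ll (2\ell-2)!\, r^{-(2\ell-2)}$ for a fixed small radius $r$, so the two bounds combine to
\[
|g_\ell(z)| \ \ll\ \frac{(2\ell-2)!}{(2\pi)^{2\ell}}\left(\frac{|z|}{r}\right)^{2\ell-1},
\]
which grows \emph{factorially} in $\ell$, not geometrically. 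This is the true size: using $\pi\cot(\pi w)=\sum_{n\in\Z}(w-n)^{-1}$ one finds
\[
g_\ell(z) = -\frac{B_{2\ell}}{(2\ell)(2\ell-1)}\sum_{n\in\Z}\Bigl(\frac{z}{z-n}\Bigr)^{2\ell-1},
\]
and on $\mathbb B_1$ the ratio $|z/(z-1)|$ can be as large as $101$ (at $z=1.01$). Since $|B_{2\ell}|/((2\ell)(2\ell-1))\sim 2(2\ell-2)!/(2\pi)^{2\ell}$, the factorial growth is genuine. Consequently the series $\sum_{\ell\gqs 1} g_\ell(z)\,x^{2\ell-1}$ diverges for every $x\neq 0$, your generating-function identity has zero radius of convergence, and no bound of the form $|u_{0,j}(z)|\lqs M' R^{-j}$ follows. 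Indeed $u_{0,2\ell-1}$ contains the single term $g_\ell(z)$, so the $u_{0,j}$ themselves grow factorially.

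What makes the proposition true is precisely the restriction $d\lqs 2L-1$ with $L=\lfloor 0.006\,\pi e\cdot N\rfloor$: this constant is chosen so that the factorial growth of $g_\ell(z)$ for $\ell<L$ is just beaten by the factor $N^{-(2\ell-1)}$, keeping each term $g_\ell(z)/N^{2\ell-1}$ (and hence $v(z;N,0)$ itself) uniformly bounded on $\mathbb B_1$. A correct argument must exploit this calibration between $L$ and $N$ rather than appeal to a nonexistent geometric bound on $g_\ell$ alone; the proof in \cite{OS1} does exactly this. Your bookkeeping in the last paragraph is also slightly tangled (there are no tuples with $m_\ell\gqs 1$ for $\ell\gqs L$ in the expansion of $\exp(v)$, since $v$ is already truncated at $\ell=L-1$), but that is a minor issue compared to the missing estimate.
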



Proposition \ref{gas} implies
\begin{multline}\label{umand}
    \mathcal A_3(N,-\lambda N)  = \Im\Biggl[ \sum_{j=0}^{d-1} \frac{2}{N^{3/2+j}}  \int_{\mathcal P} e^{-N \cdot p(z)} \cdot f_{\mathcal A,\lambda}(z) \cdot u_{0,j}(z) \, dz\\
     + \frac{2}{N^{3/2}}  \int_{\mathcal P} e^{-N \cdot p(z)} \cdot f_{\mathcal A,\lambda}(z) \cdot \zeta_d(z;N,0) \, dz \Biggr]+ O(e^{0.06N})
\end{multline}
where, by \eqref{soota}, \eqref{aqw} and Proposition \ref{gas}, the last term in the parentheses in \eqref{umand} is
\begin{equation*}
    \ll \frac{1}{N^{3/2}}  \int_{\mathcal P} \left|e^{-N \cdot p(z)}\right| \cdot 1 \cdot \frac{1}{N^d} \, dz
    \ll \frac{1}{N^{d+3/2}} e^{-N \Re(p(z_0))} = \frac{|w_0|^{-N}}{N^{d+3/2}},
\end{equation*}
for an implied constant depending only on $\lambda^+$.
Applying Theorem \ref{sdle} to each integral  in the first part of \eqref{umand}
 we obtain, since $k=0$,
\begin{multline} \label{wmand}
    \int_{\mathcal P} e^{-N \cdot p(z)} \cdot f_{\mathcal A,\lambda}(z) \cdot u_{0,j}(z) \, dz \\
    =  e^{-N \cdot p(z_0)}\left(\sum_{m=0}^{M-1}\G\left(m+\frac 12 \right) \frac{2\alpha_{2m}(f_{\mathcal A,\lambda} \cdot u_{0,j})}{N^{m+1/2}}+O\left( \frac{K(f_{\mathcal A,\lambda} \cdot u_{0,j})}{N^{M+1/2}}\right) \right).
\end{multline}
The error term in \eqref{wmand} corresponds to an error in \eqref{umand} of size $O(|w_0|^{-N}/N^{M+j+2})$.
Choose $M=d$ so that this error  is less than $O(|w_0|^{-N}/N^{d+3/2})$ for all $j \gqs 0$.
Therefore
\begin{multline} \label{jmp}
    \mathcal A_3(N,-\lambda N)  = \Im \left[
    \sum_{j=0}^{d-1} \frac{4}{N^{j+3/2}}   e^{-N \cdot p(z_0)} \sum_{m=0}^{d-1} \G\left(m+\frac 12 \right) \frac{\alpha_{2m}(f_{\mathcal A,\lambda} \cdot u_{0,j})}{N^{m+1/2}}
    \right]+ O\left( \frac{|w_0|^{-N}}{N^{d+3/2}}\right)\\
     = \Im \left[  w_0^{-N}
    \sum_{t=0}^{2d-2} \frac{4}{N^{t+2}}    \sum_{m=\max(0,t-d+1)}^{\min(t,d-1)} \G\left(m+\frac 12 \right)  \alpha_{2m}(f_{\mathcal A,\lambda} \cdot u_{0,t-m})
    \right]+ O\left( \frac{|w_0|^{-N}}{N^{d+3/2}}\right) \\
     = \Re \left[  w_0^{-N}
    \sum_{t=0}^{d-2} \frac{-4i}{N^{t+2}}    \sum_{m=0}^{t} \G\left(m+\frac 12 \right)  \alpha_{2m}(f_{\mathcal A,\lambda} \cdot u_{0, t-m})
    \right]+ O\left( \frac{|w_0|^{-N}}{N^{d+1}}\right)
\end{multline}
for implied constants depending only on $\lambda^+$ and $d$. (In going from the previous line to \eqref{jmp} we used that $|\alpha_{2m}(f_{\mathcal A,\lambda} \cdot u_{0,j})|$ has a bound  depending only  on  $\lambda^+$ and $d$, by Proposition \ref{albnd}, when $m,$ $j \lqs d-1$.)
Recall that $\mathcal A_1(N,-\lambda N) = \mathcal A_3(N,-\lambda N) +O(e^{0.05N})$ by \eqref{a1n} and  \eqref{a23n}.
 Hence,  with
\begin{equation} \label{btys}
    a_t(\lambda):=  -4i  \sum_{m=0}^t \G\left(m+\frac 12 \right) \alpha_{2m}(f_{\mathcal A,\lambda} \cdot u_{0,t-m}),
\end{equation}
we obtain \eqref{pres} in the statement of the theorem.

The first coefficient is
\begin{equation} \label{oad}
    a_0(\lambda)=-4i \G(1/2) \alpha_0(f_{\mathcal A,\lambda} \cdot u_{0,0})= -4i \sqrt{\pi} \alpha_0(f_{\mathcal A,\lambda})= -2i \sqrt{\pi} p_0^{1/2} f_{\mathcal A,\lambda}(z_0),
\end{equation}
using \eqref{as}.
The terms $p_0$ and $q_0$ are defined in \eqref{pspq} so that, using \eqref{pzzz2} and \eqref{qz},
\begin{equation}  \label{woad}
    p_0  =p''(z_0)/2 = \frac{-\pi i e^{2\pi i z_0}}{z_0 w_0},\qquad
    q^2_0  =q(z_0)^2 = \frac{i z_0}{w_0}.
\end{equation}
Taking square roots (and numerically checking whether the sign should be $+$ or $-$),
\begin{equation} \label{woad2}
    p_0^{1/2}  = -\frac{\sqrt{\pi} e^{-\pi i/4} e^{\pi i z_0}}{z_0^{1/2} w_0^{1/2}},\qquad
    q_0   = -\frac{e^{\pi i/4} z_0^{1/2}}{w_0^{1/2}}.
\end{equation}
By \eqref{rz} and \eqref{pspq}, we have the power series
\begin{align}
    f_{\mathcal A,\lambda}(z)  & =f_0+f_1(z-z_0)+f_2(z-z_0)^2+ \cdots \notag\\
     & = e^{-2\pi i \lambda z_0}\bigl(q_0+q_1(z-z_0)+q_2(z-z_0)^2+ \cdots\bigr) \notag\\
     & \qquad\times \bigl(1-2\pi i \lambda(z-z_0)-2\pi^2 \lambda^2(z-z_0)^2+ \cdots \bigr). \label{r00}
\end{align}
This shows that
\begin{equation} \label{a0q}
   f_{\mathcal A,\lambda}(z_0) = f_0 = e^{-2\pi i \lambda z_0} q_0.
\end{equation}
Assembling \eqref{oad}, \eqref{woad2} and \eqref{a0q} proves that $a_0(\lambda)=2  z_0 e^{-\pi i z_0(1+2\lambda)}$. This completes the proof of Theorem \ref{thma}.
\end{proof}

\begin{prop} \label{propb1s}   We have
\begin{equation} \label{b1}
a_1(\lambda)= - \frac{ w_0}{ \pi i e^{\pi i z_0(3+2\lambda)}}
\left( \frac{(2\pi i z_0)^2}{12}(6\lambda^2+6\lambda+1)-2\pi i z_0(2\lambda+1) +1  \right).
\end{equation}
\end{prop}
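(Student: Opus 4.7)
The plan is to evaluate the general formula \eqref{btys} at $t=1$, which gives
\begin{equation*}
a_1(\lambda) = -4i\left[\Gamma(1/2)\,\alpha_0(f_{\mathcal A,\lambda}\cdot u_{0,1}) + \Gamma(3/2)\,\alpha_2(f_{\mathcal A,\lambda})\right]
 = -4i\sqrt{\pi}\,\alpha_0(f_{\mathcal A,\lambda}\cdot u_{0,1}) - 2i\sqrt{\pi}\,\alpha_2(f_{\mathcal A,\lambda}).
\end{equation*}
Then I would unpack each $\alpha$ using Proposition \ref{wojf} with $\mu=2$. The first term is immediate: $\alpha_0(q)=\tfrac12 p_0^{-1/2} q_0$, so I only need the constant term of $f_{\mathcal A,\lambda}\cdot u_{0,1}$ at $z_0$. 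From \eqref{uiz} the only contributor to $u_{0,1}$ is $m_1=1$, giving $u_{0,1}(z)=g_1(z)=-\tfrac{1}{12}\pi z \cot(\pi z)$ by \eqref{grz} with $B_2=1/6$. Combined with $f_{\mathcal A,\lambda}(z_0)=e^{-2\pi i\lambda z_0}q_0$ from \eqref{a0q} and the square-root expressions \eqref{woad2}, this produces a clean term linear in $2\pi i z_0$.

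For $\alpha_2(f_{\mathcal A,\lambda})$ I would expand the formula of Proposition \ref{wojf} to obtain
\begin{equation*}
\alpha_2(q) = \tfrac12 p_0^{-3/2}\left[q_2 - \tfrac{3}{2}q_1\tfrac{p_1}{p_0} - \tfrac{3}{2}q_0\tfrac{p_2}{p_0} + \tfrac{15}{8}q_0\left(\tfrac{p_1}{p_0}\right)^2\right],
\end{equation*}
so the task reduces to computing the Taylor coefficients $p_1,p_2$ of $p(z)$ at $z_0$ and $q_0,q_1,q_2$ of $f_{\mathcal A,\lambda}(z)=q(z)e^{-2\pi i\lambda z}$ at $z_0$. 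For the $p_j$ I would differentiate the recursions \eqref{pzzz}--\eqref{pzzz2} once and twice more, evaluate at $z_0$ using $p'(z_0)=0$ and $p(z_0)=\log w_0$, and substitute $w_0=1-e^{2\pi i z_0}$ throughout. For the $q_j$ I would first compute the derivatives of $q(z)^2=iz\,e^{-\pi i z}/(2\sin(\pi(z-1)))$ (which is easier than $q$ itself) and then extract $q_1,q_2$ from $q_0^2=iz_0/w_0$ together with the known $q_0$ in \eqref{woad2}. Finally I would multiply the series for $q(z)$ by the series for $e^{-2\pi i\lambda(z-z_0)}$ expanded from \eqref{r00} to get the coefficients of $f_{\mathcal A,\lambda}$.

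The main obstacle is the algebraic simplification: the raw expression is a sum of several terms involving $p_0^{-1/2}, p_0^{-3/2},$ half-integer powers of $w_0$, and factors of $e^{\pi i z_0}$, and one must show that all the $p_0^{-1/2}$'s and square-root ambiguities cancel, leaving the quadratic polynomial in $2\pi i z_0$ with coefficients $6\lambda^2+6\lambda+1$, $-(2\lambda+1)$, and $1$ that appears in \eqref{b1}. The key simplifying identities to invoke are $e^{2\pi i z_0}=1-w_0$, the saddle relation $p'(z_0)=0$ in the form $p_0=\log w_0-\log(1-e^{2\pi i z_0})$ cleaned via \eqref{pzzz}, and the explicit $p_0=-\pi i e^{2\pi i z_0}/(z_0 w_0)$ from \eqref{woad}. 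Once these are substituted, each of the four contributions (from $q_2$, $q_1 p_1$, $q_0 p_2$, and $q_0 p_1^2$) collapses to a monomial in $2\pi i z_0$ times $w_0/(\pi i\, e^{\pi i z_0(3+2\lambda)})$, and collecting by powers of $2\pi i z_0$ and powers of $\lambda$ yields \eqref{b1}. As a numerical sanity check I would verify the formula against the computed values of $W_1(N,\lambda N)$ in Table \ref{tgz} at $m=2$.
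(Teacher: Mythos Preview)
Your proposal is correct and follows essentially the same route as the paper's proof: evaluate \eqref{btys} at $t=1$, compute $u_{0,1}(z_0)=g_1(z_0)$ directly (the paper cites this as $\pi i z_0(-1/2+1/w_0)/6$, which agrees with your $-\tfrac{1}{12}\pi z_0\cot(\pi z_0)$ after substituting $e^{2\pi i z_0}=1-w_0$), expand $\alpha_2$ via Proposition \ref{wojf}, and then compute the needed ratios $p_1/p_0$, $p_2/p_0$, $q_1/q_0$, $q_2/q_0$ before passing to $f_1/f_0$, $f_2/f_0$ through \eqref{r00}. One small slip: your expression for $q(z)^2$ should be $z\,e^{-\pi i z}/(2\sin(\pi(z-1)))$, without the extra factor of $i$; the paper simplifies this to $iz/(1-e^{2\pi i z})$, which makes the differentiation cleaner.
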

\begin{proof} Formula \eqref{btys} implies
\begin{align*}
    a_1(\lambda) & =  -4i  \bigl(\G(1/2) \alpha_{0}(f_{\mathcal A,\lambda} \cdot u_{0,1}) + \G(3/2)\alpha_{2}(f_{\mathcal A,\lambda} \cdot u_{0,0})\bigr)\\
     & =  -4i\sqrt{\pi} \bigl( \alpha_{0}(f_{\mathcal A,\lambda}) \cdot u_{0,1}(z_0) + \alpha_{2}(f_{\mathcal A,\lambda})/2\bigr).
\end{align*}
As in \cite[Prop. 5.10]{OS1}, $u_{0,1}(z_0) = \pi i z_0( -1/2+1/w_0)/6$. Also \eqref{hjw} implies
\begin{equation*}
    \alpha_2(f_{\mathcal A,\lambda})=\frac{1}{2p_0^{1/2}}\frac{f_0}{p_0}\left(\frac{f_2}{f_0}-\frac{3}{2} \frac{p_1}{p_0} \frac{f_1}{f_0} -\frac{3}{2} \frac{p_2}{p_0} +\frac{15}{8} \frac{p_1^2}{p_0^2}\right).
\end{equation*}
From \eqref{pzzz}, \eqref{pzzz2} and their generalizations we have
\begin{equation*}
    \frac{p_1}{p_0}  = -\frac{1}{z_0}+\frac{2\pi i}{3w_0}, \qquad
    \frac{p_2}{p_0}  = \frac{\pi^2}{3w_0}  +\frac{1}{z_0^2}-\frac{2 \pi i}{3z_0 w_0} -\frac{2\pi^2}{3w_0^2}.
\end{equation*}
Taking derivatives of $q^2(z)=iz/(1-e^{2\pi i z})$ and evaluating at $z=z_0$ shows that
\begin{equation*}
    \frac{q_1}{q_0}  = -\pi i +\frac{1}{2z_0}+\frac{\pi i}{w_0}, \qquad
    \frac{q_2}{q_0}  = -\frac{\pi^2}{2}  -\frac{\pi i}{2z_0}+\frac{2\pi^2}{w_0} -\frac{1}{8z_0^2}+\frac{ \pi i}{2z_0 w_0} -\frac{3\pi^2}{2w_0^2}.
\end{equation*}
Then by \eqref{r00},
\begin{equation*}
    \frac{f_1}{f_0}  = \frac{q_1}{q_0}-2\pi i\lambda, \qquad
    \frac{f_2}{f_0}  = \frac{q_2}{q_0} -2\pi i\lambda \frac{q_1}{q_0} -2\pi^2 \lambda^2.
\end{equation*}
Putting this all together with the quantities $a_0(f_{\mathcal A,\lambda}),$  $p_0^{1/2},$ $p_0$ and $f_0$ evaluated in \eqref{oad} -- \eqref{a0q}, and simplifying with \eqref{w0x}, finishes the proof.
\end{proof}

Table \ref{a1n1} gives examples of the accuracy of Theorem \ref{thma}, showing the approximation to $\mathcal A_1(N, -\lambda N)$ on the right side of \eqref{thmaa} for $N=1200$ and different values of $\lambda$ and $m$. The last column computes $\mathcal A_1(N, -\lambda N)$ directly from \eqref{cj1}.
\begin{table}[ht]
{\footnotesize
\begin{center}
\begin{tabular}{cc|cccc|c}
$N$ & $\lambda$ & $m=1$ & $m=2$ & $m=3$ &  $m=5$ & $\mathcal A_1(N, -\lambda N)$  \\ \hline
$1200$ & $1/3$ & $-1.60733\times 10^{30}$ &  $-1.60827 \times 10^{30}$ & $-1.60783 \times 10^{30}$ &   $-1.60784 \times 10^{30}$ &  $-1.60784 \times 10^{30}$\\
$1200$ & $1$ & $\phantom{-}1.89943\times 10^{30}$ &  $\phantom{-}1.71839 \times 10^{30}$ & $\phantom{-}1.72504 \times 10^{30}$ &   $\phantom{-}1.72506 \times 10^{30}$ &  $\phantom{-}1.72507 \times 10^{30}$ \\
$1200$ & $2$ & $-1.99478\times 10^{31}$ &  $-1.95514 \times 10^{31}$ & $-1.94125 \times 10^{31}$ &   $-1.94292 \times 10^{31}$ &  $-1.94291 \times 10^{31}$
\end{tabular}
\caption{The approximations of Theorem \ref{thma}  to $\mathcal A_1(N,-\lambda N)$.} \label{a1n1}
\end{center}}
\end{table}

\section{Proof of Theorem \ref{thmb}} \label{klmx2}
We prove Theorem \ref{thmb} in this section; it will follow directly from Propositions \ref{by},  \ref{cy},  \ref{dy} and \ref{ey}.
For $N \gqs 300$,  the indexing set $\farey_N-(\farey_{100} \cup \mathcal A(N))$ in \eqref{indx} may be partitioned into four pieces:
\begin{equation*}
     \mathcal B(101,N) \cup \mathcal C(N) \cup \mathcal D(N) \cup \mathcal E(N)
\end{equation*}
where
\begin{align}
    \mathcal C(N) & := \Bigl\{ h/k \ : \ N/2  <k \lqs N,  \ k \text{ odd}, \ h=2 \text{ \ or \ } h=k-2 \Bigr\}, \label{cnsub}\\
    \mathcal D(N) & := \Bigl\{ h/k \ : \ N/2  <k \lqs N,  \ k \text{ odd}, \ h=(k-1)/2 \text{ \ or \ } h=(k+1)/2 \Bigr\}, \label{dnsub}\\
    \mathcal E(N) & := \Bigl\{ h/k \ : \ N/3  <k \lqs N/2,  \ h=1 \text{ \ or \ } h=k-1 \Bigr\} \label{ensub}
\end{align}
and $\mathcal B(101,N)$ is what is left. We see next that the sum of $Q_{hk(-\lambda N)}(N)$ for $h/k \in \mathcal B(101,N)$ is small enough that we may bound the absolute value of each term. Doing this with the sums over $\mathcal C(N)$, $\mathcal D(N)$ and $\mathcal E(N)$ produces bounds that are larger than the main term of Theorem \ref{thma} and so we must use saddle-point methods for these.

\subsection{Bounds for $h/k \in \mathcal B(101,N)$}
\begin{prop} \label{by}
Let $\lambda^+$ be a positive real number. Suppose $N \in \Z_{\gqs 101}$ and  $\lambda N \in \Z$ for $\lambda$ satisfying $|\lambda| \lqs \lambda^+$. For an implied constant depending only on  $\lambda^+$,
\begin{equation}\label{shwb}
    \sum_{h/k \in \mathcal B(101,N)} Q_{hk(-\lambda N)}(N) =O(e^{0.055N}).
\end{equation}
\end{prop}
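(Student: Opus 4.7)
The plan is to bound $|Q_{hk(-\lambda N)}(N)|$ term-by-term and absorb the $O(N^{2})$ Farey-cardinality of $\mathcal B(101,N)$ via a uniform exponential bound with exponent strictly below $0.055$. Since $h/k$ is always a simple pole of $Q(z;N,-\lambda N)$ (as $Q$ has order $1$ at every Farey point of $\farey_N$), computing the residue explicitly as in \cite[Eq.~(1.22)]{OS1} expresses $Q_{hk(-\lambda N)}(N)$ as a unit-modulus factor times $1/k$ times a reciprocal sine product of the form $\spr{h/k}{N-k}$ (with the indices divisible by $k$ suppressed), and the problem reduces to giving a uniform exponential bound on this product.

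The key estimates are the Euler–Maclaurin approximations of \cite[Thm.~4.1, Cor.~4.2]{OS1}, which yield
$$\spr{h/k}{N-k} \ll \exp\bigl(\alpha(h/k,N/k)\cdot N\bigr)$$
with an explicit exponent $\alpha$ expressible through $\cl(2\pi h\cdot N/k)/(2\pi N/k)$ plus lower-order sinusoidal corrections. First I would verify the required bound separately in three regimes carved out by the definition of $\mathcal B(101,N)$: (i) $101\lqs k\lqs N/3$, where the product has $\gqs 2k$ factors but $h/k$ sweeps out a full reduced residue system, so the averaging across $h$ produces a much smaller exponent; (ii) $N/3<k\lqs N/2$ with $h\notin\{1,k-1\}$, where the exclusion forces $h/k$ away from the endpoints; and (iii) $N/2<k\lqs N$ with $h\notin\{1,2,(k-1)/2,(k+1)/2,k-2,k-1\}$, where the exclusion forces $h/k$ a uniform distance from $0,\,1/2,\,1$.

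Having produced a uniform bound $|Q_{hk(-\lambda N)}(N)| \lqs e^{\alpha^{*} N}/k$ with $\alpha^{*}<0.055$ in every regime, I would sum. For fixed $k$, the number of admissible $h$ is at most $\phi(k)\lqs k$, contributing a factor $k/k=1$; summing over $k$ from $101$ to $N$ inserts at most a factor of $N$, absorbed by increasing $\alpha^{*}$ by an amount that vanishes as $N\to\infty$. This yields \eqref{shwb}.

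The main obstacle is regime (iii), where $N/k\in(1,2)$ and the dominant exponent is $\cl(2\pi N/k)/(2\pi N/k)$ evaluated at the argument $h\cdot N/k$ modulo $1$. The Clausen quotient is largest precisely near the arguments that correspond to $h/k$ close to $0$, $1/2$, or $1$, which are exactly the values placed in $\mathcal A(N)\cup\mathcal C(N)\cup\mathcal D(N)$ and removed from $\mathcal B(101,N)$. The delicate part is a numerical/analytic verification that once these ``peaks'' are cut out, the supremum of the Clausen exponent over the remaining arguments drops below $0.055$ (well below the main-term exponent $U\approx 0.068$ in \eqref{abuv}); this is the computation that explains the specific threshold $0.055$ chosen in Theorem \ref{thmb}.
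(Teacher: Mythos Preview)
Your proposal has a genuine gap. The claim that ``$h/k$ is always a simple pole of $Q(z;N,-\lambda N)$'' is false: the pole of $Q(z;N,\sigma)$ at $h/k$ has order $s=\lfloor N/k\rfloor$, coming from the $s$ factors $(1-e^{2\pi i j z})$ with $k\mid j$. In your regime (i), $101\lqs k\lqs N/3$, this order is at least $3$; in regime (ii), $N/3<k\lqs N/2$, the order is $2$. Consequently the simple-pole residue formula \cite[Eq.~(1.22)]{OS1} that you invoke does not apply there, and the object you need to control is not a single reciprocal sine product but a more complicated Laurent coefficient. The Euler--Maclaurin estimates of \cite[Thm.~4.1, Cor.~4.2]{OS1} you cite are also specific to $\spr{1/k}{m}$ and do not directly treat $\spr{h/k}{m}$ for general $h$, so even your regime (iii) analysis is incomplete as stated.

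The paper's proof is essentially a one-line citation: it quotes the general bound \cite[Prop.~3.4]{OS2},
\begin{equation*}
    |Q_{hk\sigma}(N)|  \lqs \frac{9}{k^3} \exp\left( N \frac{2  +  \log \left(\xi/2+ \xi' k/8 \right)}{k} +\frac{|\sigma|}{N} \right)\left|\spr{h/k}{N-s k} \right|,
\end{equation*}
valid for arbitrary pole order $s=\lfloor N/k\rfloor$, and the summation result \cite[Thm.~3.5]{OS2} which already establishes $\sum_{h/k\in\mathcal B(101,N)}Q_{hk\sigma}(N)=O(e^{0.055N})$ for fixed $\sigma$. The only new observation needed here is that when $\sigma=-\lambda N$ the $\sigma$-dependence in the bound above is the factor $\exp(|\sigma|/N)=\exp(|\lambda|)\lqs\exp(\lambda^{+})$, so the implied constant depends only on $\lambda^{+}$. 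Your term-by-term summation strategy is the right shape, but the hard work---bounding residues at higher-order poles and handling $\spr{h/k}{m}$ for arbitrary $h$---is exactly what \cite[Prop.~3.4, Thm.~3.5]{OS2} supply and what your sketch omits.
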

\begin{proof}
For $101 \lqs k \lqs N$ and $s := \lfloor N/k \rfloor$ we have from \cite[Prop. 3.4]{OS2} that
\begin{equation} \label{i7}
    |Q_{hk\sigma}(N)|  \lqs \frac{9}{k^3} \exp\left( N \frac{2  +  \log \left(\xi/2+ \xi' k/8 \right)}{k} +\frac{|\sigma|}{N} \right)\left|\spr{h/k}{N-s k} \right|
\end{equation}
for $\xi \approx 1.00038$ and $\xi' \approx 1.01041$. Then \eqref{i7} is used to prove, for an implied constant depending only on $\sigma$, that  $\sum_{h/k \in \mathcal B(101,N)}  Q_{hk\sigma}(N) = O(e^{0.055N})$. This is \cite[Thm. 3.5]{OS2}. When $\sigma = -\lambda N$, the only $\lambda$ dependence in \eqref{i7} is a factor $\exp(|\lambda|)$.
\end{proof}

\subsection{Bounds for $h/k \in \mathcal C(N)$}
This section may be read alongside Sections 5 and 6 of \cite{OS2}.
Set
\begin{equation}\label{grig}
\mathcal C_1(N,\sigma)  := \sum_{h/k \in \mathcal C(N)} Q_{hk\sigma}(N) = 2 \Re \sum_{\frac{N}{2}  <k \lqs N, \ k \text{ odd}}  Q_{2k\sigma}(N)
\end{equation}
where the equality in \eqref{grig} uses \eqref{defqnx} and requires $\sigma \in \Z$. We next wish to show
\begin{equation}\label{shw}
    \mathcal C_1(N,-\lambda N)=O(e^{WN})
\end{equation}
for a $W<U \approx 0.068$.
 In fact, the asymptotic expansion of $\mathcal C_1(N,\sigma)$ is found in \cite[Thm. 1.5]{OS2} and implies
\eqref{shw}, but with an implied constant depending on $\sigma$ and hence on $N$ when $\sigma$ takes the form $-\lambda N$. It is straightforward to rework the proof slightly (as we did for $\mathcal A_1(N,-\lambda N)$ in Section \ref{klmx}) and prove \eqref{shw} with an implied constant depending only on $\lambda$.
We give the details of this next.

The sum \eqref{grig} corresponds to $2N/k \in [2,4)$ and our treatment requires breaking this into two parts:
 $\mathcal C_2(N,\sigma)$ for $2N/k \in [2,3)$ and $\mathcal C^*_2(N,\sigma)$  with $2N/k \in [3,4)$.

We establish an intermediate result for $\mathcal C_2(N,-\lambda N)$ first, as follows.
Recall $p(z)$ from  \eqref{p(z)} and $g_\ell(z)$ from \eqref{grz}. Define
\begin{align}
  q_\mathcal C(z) & := \left( \frac{z }{2\sin(\pi z)}\right)^{1/2} \exp(-\pi i z/2 ),  \label{qbw}\\
  f_{\mathcal C,\lambda}(z) & := q_\mathcal C(z) \exp(-2\pi i \lambda z ), \label{rcz}\\
  v_\mathcal C(z;N,\sigma)  & := \frac{2\pi i \sigma z}N +\sum_{\ell=1}^{L-1} \frac{g_{\ell}(z)}{N^{2\ell-1}} \qquad (L= \lfloor 0.006\pi e \cdot N/2\rfloor). \label{vbw}
\end{align}

\begin{prop} \label{pxc2}
Let $\lambda^+$ be a positive real number. Suppose $N \in \Z_{\gqs 1}$ and  $\lambda N \in \Z$ for $\lambda$ satisfying $|\lambda| \lqs \lambda^+$.
For an implied constant depending only on $\lambda^+$,
\begin{equation}\label{oso}
    \mathcal C_2(N,-\lambda N)=\frac{1}{2N^{3/2}}  \Re \int_{2.01}^{2.49}
     \exp \bigl(-N \cdot p(z) \bigr) f_{\mathcal C,\lambda}(z) \exp \bigl(v_\mathcal C(z;N,0)\bigr) \, dz + O\left(e^{0.05N/2}\right).
\end{equation}
\end{prop}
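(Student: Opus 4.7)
The plan is to adapt, step for step, the derivation of the integral representation \eqref{a3(n)} for $\mathcal{A}_3(N,-\lambda N)$ in Section \ref{klmx}. By construction in \cite[Sect.~5,6]{OS2}, $\mathcal{C}_2(N,\sigma)$ is obtained from the portion of $\mathcal{C}_1(N,\sigma)$ with $2N/k \in (2.01, 2.49)$ after inserting the asymptotic expansion \eqref{ott2} for $\spr{2/k}{N-k}$; the boundary intervals $[2, 2.01]$ and $[2.49, 3)$ contribute only $O(e^{0.05 N/2})$ by the crude bound \eqref{ott1}. The leading factor produced by \eqref{ott2}, namely $\exp\bigl(N \cdot \cl(2\pi \hat{z})/(2\pi \hat{z})\bigr)$ with $\hat{z} = 2N/k$, is converted by the continuation formula \eqref{dli} with $m = 2$ into $\exp(-N \cdot p(\hat{z}))$ up to an explicit polynomial phase that is absorbed into $q_\mathcal{C}$. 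Hence $\mathcal{C}_2(N, -\lambda N)$ naturally has the shape of a Riemann-type sum for the integrand in \eqref{oso} at the nodes $\hat{z} = 2N/k$.

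First, I would extract the factor $\exp(-2\pi i \lambda \hat{z})$ from $\exp(v_\mathcal{C}(\hat{z}; N, -\lambda N))$ and absorb it into $q_\mathcal{C}(\hat{z})$ to form $f_{\mathcal{C},\lambda}(\hat{z})$ as in \eqref{rcz}; what remains in the exponent is $v_\mathcal{C}(\hat{z}; N, 0)$. This is the exact analogue of the step that passed from \eqref{a2ns} with $\sigma = -\lambda N$ to \eqref{incheg}. Since $|\exp(-2\pi i \lambda z)| \leq \exp(2\pi \lambda^+ |z|)$ on any bounded set, this absorption costs only a constant factor that depends on $\lambda^+$ but not on $N$.

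Second, I would verify that the combined integrand $f_{\mathcal{C},\lambda}(z) \exp(v_\mathcal{C}(z; N, 0))$ is holomorphic and uniformly bounded, in $N$, on a neighborhood of $[2.01, 2.49]$. The analogous fact on $\mathbb{B}_1$ was \cite[Prop.~4.7]{OS1}; the same proof applies on $\mathbb{B}_2$ from \eqref{box} with $m = 2$, since $2\sin(\pi z)$ in \eqref{qbw} is zero-free there and the dilogarithmic pieces in $p$ and in the $g_\ell$'s have no singularities in this strip. With this uniform bound, the discrete-to-integral conversion of \cite[Thm.~4.3]{OS1} replaces the Riemann-type sum by the integral $\int_{2.01}^{2.49}$ at the cost of an error of size $e^{0.05 N/2}$. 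The factor $1/(2N^{3/2})$ in \eqref{oso}, as opposed to $2/N^{3/2}$ in \eqref{a3(n)}, reflects that the $1/k^2$ weight when written in $\hat{z} = 2N/k$ contributes an additional $(2/N)^2$ compared to $\hat{z} = N/k$, and that the summation runs over odd $k$ only (halving the density of nodes).

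The main obstacle is the phase bookkeeping in going from $h = 1$ (Section \ref{klmx}) to $h = 2$: one must check that the Clausen-to-dilogarithm identity \eqref{dli} with $m = 2$, combined with the shifted sine factor $\sin(\pi z)$ in \eqref{qbw} versus $\sin(\pi(z-1))$ in \eqref{qz}, produces exactly the function $p(z)$ of \eqref{p(z)} in the exponent, so that the saddle point $z_0$ of Section \ref{klmx} and the subsequent Perron-method analysis will apply directly in the next stage. Once this matching is made explicit, the error tracking, Cauchy-theorem moves and uniform bounds are purely mechanical transplants of the argument in Section \ref{klmx}.
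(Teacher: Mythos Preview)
The proposal is correct and follows essentially the same approach as the paper's proof, which cites the specific results from \cite[Sect.~5]{OS2}: identity (5.3) for the explicit form of the summands, Theorem~5.1 for the Euler--Maclaurin estimate of $\spr{2/k}{N-k}$, Theorem~5.4 for holomorphicity and uniform boundedness on $\mathbb{B}_2$, and Propositions~5.6--5.7 for the sum-to-integral conversion.

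Two small corrections. First, the sine-product estimate you need is for $\spr{2/k}{N-k}$, and that is \cite[Thm.~5.1]{OS2}, not \eqref{ott2} of the present paper (which treats $\spr{1/k}{N-k}$); the two results are analogous but not identical. Second, in your closing remark the saddle point relevant to the next stage for $\mathcal{C}_2$ is $z_1$ in the strip $1.5<\Re(z)<2.5$, not $z_0$: this distinction is the whole reason $\mathcal{C}_2(N,-\lambda N)$ grows like $|w(0,-2)|^{-N}\approx e^{0.0257N}$ rather than $|w_0|^{-N}\approx e^{0.068N}$, and hence can be absorbed into the error in Theorem~\ref{thmb}.
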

\begin{proof}
The identity
\begin{multline}\label{b1n}
\mathcal C_2(N,\sigma) = \Re \sum_{k \text{ odd}, \ 2N/k \in [2,3)} \frac{-2}{k^2}
\exp\left( N\left[ \frac{\pi i}{2}\left(-\frac{2N}{k}+5 -2\frac k{2N} \right)\right]\right) \\
\times \exp\left(\frac{-\pi i}{2}\frac{2N}{k}\right) \exp\left( \frac 1N\left[ 2 \pi i \sigma\frac{2N}{k}\right]\right)
 \spr{2/k}{N-k}.
\end{multline}
is \cite[eq. (5.3)]{OS2}.
 Set $\hat{z}=\hat{z}(N,k):=2N/k$ and define
\begin{equation} \label{c3nx}
    \mathcal C_3(N,\sigma)  := \frac{-2}{N^{1/2}} \Re \sum_{k  \text{ odd} : \  \hat{z} \in  (2.01 ,  2.49)}
    \frac{1}{k^2} \exp \bigl(-N (p(\hat{z})-2\pi i/\hat{z})\bigr) q_{\mathcal C}(\hat{z}) \exp \bigl(v_\mathcal C (\hat{z};N,\sigma)\bigr).
\end{equation}
The sine product $\spr{2/k}{N-k}$ in \eqref{b1n} may be estimated precisely using Euler-Maclaurin summation as in \cite[Thm. 5.1]{OS2}. The result is that $\mathcal C_2(N,\sigma)$ and $\mathcal C_3(N,\sigma)$
differ by at most $O(e^{0.05N/2})$ for an absolute implied constant. For $\sigma = -\lambda N$ we easily obtain
\begin{equation*}
    \mathcal C_3(N,-\lambda N)  = \frac{-2}{N^{1/2}} \Re \sum_{k  \text{ odd} : \  \hat{z} \in  (2.01 ,  2.49)}
    \frac{1}{k^2} \exp \bigl(-N (p(\hat{z})-2\pi i/\hat{z})\bigr) f_{\mathcal C,\lambda}(\hat{z}) \exp \bigl(v_\mathcal C (\hat{z};N,0)\bigr).
\end{equation*}
It is shown in \cite[Thm. 5.4]{OS2} that $q_{\mathcal C}(z)$ and $v_\mathcal C (z;N,0)$ are holomorphic and absolutely bounded on a domain containing the box
$\mathbb B_2$, defined in \eqref{box}.
Hence,
\begin{equation}\label{soot}
    f_{\mathcal C,\lambda}(z) \exp \bigl(v_\mathcal C (z;N,0)\bigr) \ll 1 \quad \text{for} \quad z \in \mathbb B_2
\end{equation}
with an implied constant depending only on $\lambda^+$. Using this bound, the proofs of Propositions 5.6 and 5.7 in \cite{OS2} go through. This gives the desired result, expressing $\mathcal C_2(N,-\lambda N)$ as the integral in \eqref{oso}.
\end{proof}

The  second component, $\mathcal C^*_2(N,\sigma)$, is treated in a similar way as follows.
Define
\begin{align}
  p_1(z) &:= \frac 1{2\pi i z} \Bigl[-\li(e^{2\pi i z}) +\li(1) +4\pi^2 \Bigr], \notag\\
  q_\mathcal C^*(z) &:= e^{-3\pi i/4} \sqrt{z}, \notag\\
  f^*_{\mathcal C, \lambda}(z) & := q_\mathcal C^*(z) \exp(-2\pi i \lambda z ), \label{fcsl}\\
  v_\mathcal C^*(z;N,\sigma) &:= \frac{\pi i(16\sigma+1) z}{8(N+1/2)} + \sum_{\ell=1}^{L-1} \frac{g_{\ell}(z)}{(2(N+1/2))^{2\ell-1}}-\sum_{\ell=1}^{L^*-1} \frac{g_{\ell}(z)}{(N+1/2)^{2\ell-1}} \notag
\end{align}
for $L=\lfloor 0.006\pi e \cdot 2N/3\rfloor$ and $L^*=\lfloor 0.006\pi e \cdot N/3\rfloor$.
\begin{prop}
Let $\lambda^+$ be a positive real number. Suppose $N \in \Z_{\gqs 1}$ and  $\lambda N \in \Z$ for $\lambda$ satisfying $|\lambda| \lqs \lambda^+$.
For an implied constant depending only on $\lambda^+$,
\begin{multline}\label{intc2s}
    \mathcal C^*_2(N,-\lambda N)=\frac{-1}{4(N+1/2)^{3/2}}   \Im \int_{3.01}^{3.49}  \exp\bigl(-(N+1/2) p_1(z) \bigr) f^*_{\mathcal C, \lambda}(z) \exp \bigl(v_\mathcal C^*(z;N,\lambda/2)\bigr) \, dz \\
    + O(e^{0.05N/3}).
\end{multline}
\end{prop}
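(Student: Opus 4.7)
The plan is to follow essentially the same template as Proposition \ref{pxc2}, but adapted to the range $2N/k \in [3,4)$ corresponding to $k \in (N/2, 2N/3]$. The starting point will be the explicit formula for $\mathcal C^*_2(N,\sigma)$ that appears in Section 6 of \cite{OS2}, analogous to \eqref{b1n} but with the extra complication that $N-k$ may be as large as $k-1$, which forces the use of a reflected Euler--Maclaurin estimate for $\spr{2/k}{N-k}$. This reflection is what produces both the shift from $N$ to $N+1/2$ and the two competing sums over $g_\ell$ with denominators $(2(N+1/2))^{2\ell-1}$ and $(N+1/2)^{2\ell-1}$ appearing in the definition of $v_\mathcal C^*(z;N,\sigma)$.

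First I would write $\mathcal C^*_2(N,\sigma)$ as a sum over odd $k$ with $2N/k \in [3,4)$, extract the main exponential factor $\exp(-(N+1/2) p_1(z))$ using \eqref{dli} (now with $m=3$ to account for the branch shift of Clausen's integral, which is where the $4\pi^2$ in $p_1$ comes from), and apply \cite[Thm. 5.1]{OS2} or its analog in Section 6 to approximate $\spr{2/k}{N-k}$. This yields an intermediate sum $\mathcal C^*_3(N,\sigma)$ agreeing with $\mathcal C^*_2(N,\sigma)$ up to $O(e^{0.05N/3})$, where $N/3$ rather than $N/2$ appears because the effective length of the sine product is now roughly $N-k \approx N/3$.

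Next, for the specialization $\sigma=-\lambda N$, I would verify the algebraic identity
\begin{equation*}
v_\mathcal C^*(z;N,-\lambda N) = -2\pi i \lambda z + v_\mathcal C^*(z;N,\lambda/2),
\end{equation*}
obtained by rewriting $-16\lambda N/(8(N+1/2)) = -2\lambda + \lambda/(N+1/2)$ and absorbing the leading $-2\pi i \lambda z$ into the new function $f^*_{\mathcal C,\lambda}(z)=q_\mathcal C^*(z)\exp(-2\pi i \lambda z)$ defined in \eqref{fcsl}. This exactly parallels the transfer of $\lambda$-dependence from $v$ into $f_{\mathcal C,\lambda}$ in the proof of Proposition \ref{pxc2}. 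Holomorphicity and boundedness of $q_\mathcal C^*$ and of $\exp(v_\mathcal C^*(z;N,0))$ on a domain containing $\mathbb B_3$ should be stated via the corresponding result in \cite{OS2}; combined with the elementary bound $|\exp(-2\pi i \lambda z)|\lqs \exp(2\pi \lambda^+|z|)$ valid on $\mathbb B_3$, this gives
\begin{equation*}
f^*_{\mathcal C,\lambda}(z)\exp\bigl(v_\mathcal C^*(z;N,\lambda/2)\bigr) \ll 1 \quad (z\in \mathbb B_3),
\end{equation*}
with an implied constant depending only on $\lambda^+$.

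Finally, with this uniform bound in hand, the analogs of Propositions 5.6 and 5.7 from \cite{OS2} go through verbatim to convert the sum $\mathcal C^*_3(N,-\lambda N)$ into the integral appearing on the right of \eqref{intc2s}, with error $O(e^{0.05N/3})$. The main obstacle is bookkeeping rather than analysis: correctly identifying the shift $N\mapsto N+1/2$, the sign and constant conventions, and keeping track that the $\sigma$-linear term inside $v_\mathcal C^*$ combines cleanly with $f^*_{\mathcal C,\lambda}$ so that all $\lambda$-dependence in the resulting integrand is captured by a factor bounded uniformly in $\lambda$ with $|\lambda|\lqs \lambda^+$. Once that uniformity is secured, the argument is a line-by-line adaptation of the proof of Proposition \ref{pxc2}.
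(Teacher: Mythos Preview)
Your proposal is correct and follows essentially the same approach as the paper: define $\mathcal C^*_3(N,\sigma)$ via the sum with $\hat z = 2(N+1/2)/k$, cite \cite[eq.~(6.14)]{OS2} for $\mathcal C^*_2-\mathcal C^*_3 = O(e^{0.05N/3})$, use the algebraic identity to replace $q^*_{\mathcal C}\exp(v_\mathcal C^*(\,\cdot\,;N,-\lambda N))$ by $f^*_{\mathcal C,\lambda}\exp(v_\mathcal C^*(\,\cdot\,;N,\lambda/2))$, establish the uniform bound \eqref{mrw} on $\mathbb B_3$ (the paper does this via Lemma \ref{acdcy}, i.e.\ \cite[Cor.~6.4]{OS2}, rather than by citing a bound for $\exp(v_\mathcal C^*(z;N,0))$, but the difference is a factor $\exp(\pi i \lambda z/(N+1/2))$ which is harmless), and then invoke Sections 6.2--6.3 of \cite{OS2} to pass from the sum to the integral.
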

\begin{proof}
This time we  set $\hat{z}:=2(N+1/2)/k$ and define
\begin{multline} \label{c3nxs}
    \mathcal C_3^*(N,\sigma)  := \frac{1}{(N+1/2)^{1/2}}
     \Re \sum_{k \text{ odd} \ : \ \hat{z} \in  (3.01, 3.49)}
    \frac{(-1)^{(k+1)/2}}{k^2} \\
    \times\exp \bigl(-(N+1/2) (p_1(\hat{z})-3\pi i/\hat{z})\bigr)
    q^*_{\mathcal C}(\hat{z}) \exp \bigl(v_\mathcal C^*(\hat{z};N,\sigma)\bigr).
\end{multline}
It is shown in \cite[eq. (6.14)]{OS2} that $\mathcal C^*_2(N,\sigma)$ and $\mathcal C^*_3(N,\sigma)$
differ by at most $O(e^{0.05N/3})$ for an absolute implied constant. We may replace $q^*_{\mathcal C}(z) \exp \bigl(v_\mathcal C^*(z;N,\sigma)\bigr)$ in \eqref{c3nxs} by $f^*_{\mathcal C, \lambda}(z) \exp \bigl(v_\mathcal C^*(z;N,\lambda/2)\bigr)$ since  $\sigma = -\lambda N$.
Put
\begin{equation} \label{gazi}
    g_{\mathcal C, \ell}(z):=g_\ell(z)(2^{-(2\ell-1)}-1)
\end{equation}
and the next result is \cite[Corollary 6.4]{OS2}.
\begin{lemma} \label{acdcy}
For $\delta=0.007$ and all $z\in \C$ with $3+\delta \lqs \Re(z) \lqs 3.5-\delta$, we have
\begin{equation*}
    v_\mathcal C^*(z;N,\sigma) = \frac{\pi i(16\sigma+1) z}{8(N+1/2)} +\sum_{\ell=1}^{d-1} \frac{g_{\mathcal C, \ell}(z)}{(N+1/2)^{2\ell-1}} + O\left( \frac 1{N^{2d-1}}\right)
\end{equation*}
 where $2 \lqs d\lqs L^*=\lfloor 0.006 \pi e \cdot N/3 \rfloor$ and the implied constant depends only on  $d$.
\end{lemma}

With the above lemma we may show that $f^*_{\mathcal C, \lambda}(z) \exp \bigl(v_\mathcal C^*(z;N,\lambda/2)\bigr)$ is holomorphic on a domain containing $\mathbb B_3$,
 and that
\begin{equation}\label{mrw}
    f^*_{\mathcal C, \lambda}(z) \exp \bigl(v_\mathcal C^*(z;N,\lambda/2)\bigr) \ll 1 \quad \text{for} \quad z \in \mathbb B_3
\end{equation}
with an implied constant depending only on $\lambda^+$.
The proof in Sections 6.2 and 6.3 of \cite{OS2} now goes through and we obtain our integral representation \eqref{intc2s}.
\end{proof}

\begin{prop} \label{cy}
Let $\lambda^+$ be a positive real number. Suppose $N \in \Z_{\gqs 1}$ and  $\lambda N \in \Z$ for $\lambda$ satisfying $|\lambda| \lqs \lambda^+$.
For an implied constant depending only on $\lambda^+$,
\begin{equation}\label{shw2}
    \mathcal C_1(N,-\lambda N)=O(e^{0.036N}).
\end{equation}
\end{prop}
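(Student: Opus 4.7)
The plan is to mirror the strategy used for Theorem~\ref{thma}, now applying Perron's saddle-point method (Theorem~\ref{sdle}) to the two integral representations \eqref{oso} and \eqref{intc2s} obtained in the previous two propositions. Writing $\mathcal C_1(N,-\lambda N)=\mathcal C_2(N,-\lambda N)+\mathcal C_2^*(N,-\lambda N)+O(e^{0.05N/2})$ (which follows from the splitting of the sum \eqref{grig} according to whether $2N/k\in[2,3)$ or $[3,4)$, together with the error terms from \eqref{oso} and \eqref{intc2s}), it suffices to bound each of $\mathcal C_2(N,-\lambda N)$ and $\mathcal C_2^*(N,-\lambda N)$ by $O(e^{0.036N})$ with an implied constant depending only on $\lambda^+$. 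The key quantitative input is that the dilogarithm zeros governing the two exponential rates are $w(0,-2)$ and $w(1,-3)$, whose moduli satisfy $-\log|w(0,-2)|\approx 0.0257$ and $-\log|w(1,-3)|\approx 0.0357$, both strictly less than $0.036$.

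For $\mathcal C_2(N,-\lambda N)$, Theorem~\ref{disol} applied with $m=2$, $d=0$ shows that $p(z)=p_0(z)$ has a unique saddle-point $z_0^{(2)}=2+\log(1-w(0,-2))/(2\pi i)$ in the strip $3/2<\Re(z)<5/2$, with $e^{-p(z_0^{(2)})}=w(0,-2)^{-1}$. I would deform the real segment $[2.01,2.49]$ to a path $\mathcal P^{(2)}\subset \mathbb B_2$ analogous to the path $\mathcal P$ of Figure~\ref{pthpqs}, consisting of straight segments $2.01\to 2.01c_2\to 2.49c_2\to 2.49$ with $c_2:=1+i\Im(z_0^{(2)})/\Re(z_0^{(2)})$, so that $\mathcal P^{(2)}$ passes through $z_0^{(2)}$. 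Cauchy's theorem legitimises the deformation since the integrand in \eqref{oso} is holomorphic on $\mathbb B_2$ (where \eqref{soot} also provides a bound independent of $N$ and depending only on $\lambda^+$). After verifying the steepest-descent inequality $\Re(p(z))>\Re(p(z_0^{(2)}))$ on $\mathcal P^{(2)}\setminus\{z_0^{(2)}\}$, Theorem~\ref{sdle} with $\mu=2$ yields
\begin{equation*}
\mathcal C_2(N,-\lambda N)=O\!\left(\frac{|w(0,-2)|^{-N}}{N^{2}}\right)=O(e^{0.026N})=O(e^{0.036N}).
\end{equation*}

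For $\mathcal C_2^*(N,-\lambda N)$, Theorem~\ref{disol} applied with $m=3$, $d=1$ produces the unique saddle-point $z_0^{(3)}=3+\log(1-w(1,-3))/(2\pi i)$ in the strip $5/2<\Re(z)<7/2$, with $e^{-p_1(z_0^{(3)})}=w(1,-3)^{-1}$. I would deform the segment $[3.01,3.49]$ to an analogous path $\mathcal P^{(3)}\subset \mathbb B_3$ through $z_0^{(3)}$, using the holomorphicity and bound \eqref{mrw} from Lemma~\ref{acdcy}. Verifying the analogous steepest-descent condition for $p_1$ and applying Theorem~\ref{sdle} gives
\begin{equation*}
\mathcal C_2^*(N,-\lambda N)=O\!\left(\frac{|w(1,-3)|^{-N}}{N^{2}}\right)=O(e^{0.036N}),
\end{equation*}
since $|w(1,-3)|^{-1}\approx e^{0.03567}<e^{0.036}$. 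Combining both estimates with the $O(e^{0.025N})$ and $O(e^{0.017N})$ errors from \eqref{oso} and \eqref{intc2s} completes the bound.

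The main obstacle is verifying the steepest-descent inequalities $\Re(p(z))>\Re(p(z_0^{(2)}))$ on $\mathcal P^{(2)}\setminus\{z_0^{(2)}\}$ and $\Re(p_1(z))>\Re(p_1(z_0^{(3)}))$ on $\mathcal P^{(3)}\setminus\{z_0^{(3)}\}$. The corresponding statement for $p=p_0$ on the box $\mathbb B_1$ was established as \cite[Thm.~5.2]{OS1}, and the proof there should adapt to these boxes using the decomposition of $\Re(p_d(z))$ in terms of Clausen's integral via \eqref{dli}; the required monotonicity properties of $\cl$ are insensitive to the integer $m$ in \eqref{dli} and the branch parameter $d$ in $p_d$ appears only through an additive constant that does not affect whether $\Re(p_d)$ has its minimum on the path at the saddle-point. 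The $\lambda$-dependence enters only through $|\exp(-2\pi i\lambda z)|\leq\exp(2\pi\lambda^+)$ on the relevant boxes, so all implied constants in the saddle-point application depend only on $\lambda^+$, as required.
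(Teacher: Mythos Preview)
Your proposal is correct and follows essentially the same route as the paper: split $\mathcal C_1=\mathcal C_2+\mathcal C_2^*$, move each contour through the saddle-point $z_1$ (your $z_0^{(2)}$) respectively $z_3$ (your $z_0^{(3)}$), and use the steepest-descent inequality together with the uniform bounds \eqref{soot} and \eqref{mrw}. Two minor differences are worth noting. First, the paper does not invoke the full saddle-point expansion (Theorem~\ref{sdle}) at this stage; once $\Re(p(z))\gqs \Re(p(z_1))$ on $\mathcal Q$ is known, a direct estimate $\bigl|\int_{\mathcal Q} e^{-Np(z)}\cdots\,dz\bigr|\ll e^{-N\Re(p(z_1))}$ already gives $\mathcal C_2(N,-\lambda N)\ll e^{U_{\mathcal C}N}$, and similarly for $\mathcal C_2^*$. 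Second, the steepest-descent inequalities you flag as the main obstacle are not re-derived here but simply quoted from \cite[Thm.~5.8]{OS2} and \cite[Sect.~6.3]{OS2}, so no adaptation of \cite[Thm.~5.2]{OS1} is needed.
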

\begin{proof}
We bound the integral representation \eqref{oso} of $\mathcal C_2(N,-\lambda N)$ by first moving the path of integration to one going through a saddle-point of $p(z)$. By Theorem \ref{disol}, the unique solution to $p'(z)=0$ for $1.5<\Re(z)<2.5$ is
\begin{equation*}
    z_1:=2+\frac{\log \bigl(1-w(0,-2)\bigr)}{2\pi i} \approx 2.20541 + 0.345648 i.
\end{equation*}
Let  $c=1+i \Im(z_1)/\Re(z_1)$ and make $\mathcal Q$ the polygonal path between the points $2.01$, $2.01 c$, $2.49 c$ and $2.49$, as shown in Figure \ref{pthpqs}, passing through $z_1$. It is proved in \cite[Thm. 5.8]{OS2} that
\begin{equation}\label{aqwc}
    \Re(p(z)-p(z_1))>0 \quad \text{for all} \quad z \in \mathcal Q, \ z \neq z_1.
\end{equation}
Therefore, recalling \eqref{soot},
\begin{equation}\label{cnn2}
\mathcal C_2(N,-\lambda N) \ll e^{U_\mathcal C N} \quad \text{for} \quad U_\mathcal C:=-\Re(p(z_1))=-\log|w(0,-2)| \approx 0.0256706.
\end{equation}

Bounding the integral representation \eqref{intc2s} of $\mathcal C^*_2(N,-\lambda N)$ is achieved similarly. By Theorem \ref{disol}, the unique solution to $p'_1(z)=0$ for $2.5<\Re(z)<3.5$ is
\begin{equation*}
    z_3:=3+\frac{\log \bigl(1-w(1,-3)\bigr)}{2\pi i} \approx 3.08382 - 0.0833451  i.
\end{equation*}
Let  $c=1+i \Im(z_3)/\Re(z_3)$ and make $\mathcal S$ the polygonal path between the points $3.01$, $3.01 c$, $3.49 c$ and $3.49$, as shown in Figure \ref{pthpqs}, passing through $z_3$. As seen  in \cite[Sect. 6.3]{OS2},
\begin{equation*}
    \Re(p_1(z)-p_1(z_3))>0 \quad \text{for all} \quad z \in \mathcal S, \ z \neq z_3.
\end{equation*}
Therefore, recalling \eqref{mrw},
\begin{equation}\label{cnn2s}
\mathcal C^*_2(N,-\lambda N) \ll e^{U^*_\mathcal C N} \quad \text{for} \quad U^*_\mathcal C:=-\Re(p(z_3))= -\log|w(1,-3)| \approx 0.0356795.
\end{equation}

Together, \eqref{cnn2} and \eqref{cnn2s} prove \eqref{shw2}.
\end{proof}

\subsection{Bounds for $h/k \in \mathcal D(N)$}
This section may be read alongside Section 7 of \cite{OS2}.
Write
\begin{equation}\label{suma}
\mathcal D_1(N,\sigma)  := \sum_{h/k \in \mathcal D(N)} Q_{hk\sigma}(N) = 2 \Re \sum_{\frac{N}{2}  <k \lqs N, \ k \text{ odd}}  Q_{(\frac{k-1}{2})k\sigma}(N).
\end{equation}
Recall $g_\ell(z)$ from \eqref{grz} and define
\begin{align*}
    q_\mathcal D(z) & :=  \left( \frac{z }{2\sin(\pi(z -1)/2)}\right)^{1/2} \exp\left( -\frac{\pi i}{4}\left(z+3\right)\right),  \\
     q^*_\mathcal D(z) & :=  2\sin(\pi(z -1)/2) \left( \frac{z }{2\sin(\pi(z -1)/2)}\right)^{1/2} \exp\left( \frac{\pi i}{4}\left(z-1\right)\right), \\
    g_\ell^*(z) & :=  -\frac{B_{2\ell}}{(2\ell)!} \left( \pi z/2\right)^{2\ell-1} \cot^{(2\ell-2)}\left(\pi (z-1)/2\right),\\
    v_\mathcal D(z;N, \sigma) & :=  \frac{\pi i \sigma z}{N} + \sum_{\ell=1}^{L-1} \frac{g_{\ell}(z)-g^*_{\ell}(z)}{N^{2\ell-1}}
    +\sum_{\ell=1}^{L^*-1} \frac{2 g^*_{\ell}(z) - g_{\ell}(z)}{(N/2)^{2\ell-1}}
\end{align*}
for $L:=\lfloor 0.006\pi e \cdot N\rfloor$ and $L^*:=\lfloor 0.006\pi e \cdot N/2\rfloor$. Also set
\begin{equation}\label{fdls}
    f_{\mathcal D, \lambda}(z)  := q_\mathcal D(z) \exp(-\pi i \lambda z),
    \qquad f^*_{\mathcal D, \lambda}(z)  := i q^*_\mathcal D(z) \exp(-\pi i \lambda z).
\end{equation}

\begin{prop} 
Let $\lambda^+$ be a positive real number. Suppose $N \in \Z_{\gqs 1}$ and  $\lambda N \in \Z$ for $\lambda$ satisfying $|\lambda| \lqs \lambda^+$. Then
\begin{equation}\label{shw3o}
    \mathcal D_1(N,-\lambda N)=
    -\Re \int_{1.01}^{1.49}
    \frac{\exp \bigl(-N \cdot p(z)/2 \bigr)}{ N^{3/2}}  f_{\mathcal D, \lambda}(z) \exp \bigl(v_\mathcal D(z;N,0)\bigr) \, dz + O(e^{0.05N/2})
\end{equation}
for $N$ odd. Also
\begin{equation}\label{shw3e}
    \mathcal D_1(N,-\lambda N)= -\Re \int_{1.01}^{1.49}
    \frac{\exp \bigl(-(N+1) \cdot p(z)/2 \bigr)}{ (N+1)^{3/2}}  f^*_{\mathcal D, \lambda}(z) \exp \bigl(v_\mathcal D(z;N+1,\lambda)\bigr) \, dz + O(e^{0.05N/2})
\end{equation}
for $N$ even. The implied constants in \eqref{shw3o} and \eqref{shw3e} depend only on $\lambda^+$,
\end{prop}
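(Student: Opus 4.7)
The plan is to follow the template laid down by Proposition \ref{pxc2} (and its $\mathcal{C}^*$ counterpart), reworking the proof of \cite[Thm. 7.1]{OS2} so that the implicit $\sigma$-dependence is absorbed into $f_{\mathcal{D},\lambda}$ and $f^*_{\mathcal{D},\lambda}$ rather than left in the error term. The genuinely new ingredient, compared with the $\mathcal{A}$ and $\mathcal{C}$ cases, is a half-angle factorization of the sine product $\spr{(k-1)/(2k)}{N-k}$, whose outcome depends on the parity of $N-k$ and so on the parity of $N$ (since $k$ is odd throughout the sum \eqref{suma}).

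First I would use \eqref{sres} at $z=h/k$ with $h=(k-1)/2$ to write each $Q_{((k-1)/2)k\sigma}(N)$ as an explicit prefactor times $\spr{(k-1)/(2k)}{N-k}$. Writing $h/k = 1/2 - 1/(2k)$, each sine factor becomes
\begin{equation*}
2\sin\!\left(\pi j \tfrac{k-1}{2k}\right) \;=\; 2\sin\!\left(\tfrac{\pi j}{2} - \tfrac{\pi j}{2k}\right),
\end{equation*}
which for $j=2m$ reduces to $2(-1)^{m+1}\sin(\pi m/k)$ and for $j=2m+1$ to $2(-1)^m\cos(\pi(2m+1)/(2k))$. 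Grouping these gives, up to an explicit sign and a bounded cosine product, the reciprocal of $\spn{1/k}{\lfloor(N-k)/2\rfloor}$. When $N$ is odd, $N-k$ is even and $\lfloor(N-k)/2\rfloor = (N-k)/2$ exactly; when $N$ is even, $N-k$ is odd and we are left with one unmatched sine factor, namely $2\sin(\pi(N/k-1)/2)$ once expressed in terms of $\hat z = N/k$. This unmatched factor is precisely what distinguishes $q^*_\mathcal{D}(z)$ from $q_\mathcal{D}(z)$, and shifting the argument from $N$ to $N+1$ is what lets the remaining matched product be estimated with parameter $(N+1)/(2k)$ rather than $N/(2k)$.

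Next, I would apply the Euler-Maclaurin estimate \cite[Thm. 4.1]{OS1} to $\spn{1/k}{\lfloor(N-k)/2\rfloor}$, exactly as was done for $\mathcal{A}_1$ and $\mathcal{C}_2$, producing the factor $\exp(-(N/2)\,p(\hat z))$ (resp.\ $\exp(-((N+1)/2)\,p(\hat z))$) together with the sum $\sum g_\ell(\hat z)/N^{2\ell-1}$, which is repackaged into $v_\mathcal{D}(z;N,0)$ (resp.\ $v_\mathcal{D}(z;N+1,\lambda)$) after substituting $\sigma=-\lambda N$. The phase $\exp(2\pi i \sigma h/k)$, with $h/k \approx 1/2$, produces the $\exp(-\pi i \lambda \hat z)$ factor, which I pull out of $v_\mathcal{D}$ and absorb into $f_{\mathcal{D},\lambda}$ and $f^*_{\mathcal{D},\lambda}$; this is what makes the implied constants depend only on $\lambda^+$ and not on $N$. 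The endpoint contributions $\hat z \in [1, 1.01] \cup [1.49, 2)$ are bounded by $O(e^{0.05 N/2})$ using the crude estimate \eqref{ott1} on $\spn{1/k}{\lfloor(N-k)/2\rfloor}$, and they enter the error term.

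Finally I would check, by exactly the argument in \cite[Thm. 5.4]{OS2}, that $f_{\mathcal{D},\lambda}(z)\exp(v_\mathcal{D}(z;N,0))$ and $f^*_{\mathcal{D},\lambda}(z)\exp(v_\mathcal{D}(z;N+1,\lambda))$ are holomorphic and $O(1)$ (with constant depending only on $\lambda^+$) on a neighborhood of the box $\mathbb{B}_1$; the factor $\exp(-\pi i \lambda z)$ is trivially bounded there by $e^{\lambda^+\pi|z|}$. With this in hand, the Riemann-sum-to-integral replacement of \cite[Prop. 5.6]{OS2} / \cite[Thm. 4.3]{OS1}, applied to the sum over odd $k$ with $\hat z \in (1.01, 1.49)$, yields the integral representations \eqref{shw3o} and \eqref{shw3e}. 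The main obstacle I expect is purely bookkeeping: tracking the signs, the power of $i$, and the exact exponential phase through the half-angle split and the parity case distinction, and in particular verifying that the shift $N \to N+1$ in the even case is consistent with both the $((N+1)/2) p(z)$ scaling and the $v_\mathcal{D}(z;N+1,\lambda)$ correction (the $\lambda$ there, as opposed to $0$, comes from the phase realignment $\exp(-\pi i \lambda N/k) = \exp(-\pi i \lambda) \exp(-\pi i \lambda (N+1)/k) \cdot \exp(\pi i \lambda/k)$ after passing from the $N$-scale to the $(N+1)$-scale).
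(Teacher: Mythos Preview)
Your approach is essentially the same as the paper's, which simply cites the machinery from Section~7 of \cite{OS2} (in particular \cite[Eq.~(7.2), Eq.~(7.30), Thm.~7.5]{OS2}) and then makes the substitution $\sigma=-\lambda N$, pulling out $\exp(-\pi i\lambda z)$ into $f_{\mathcal D,\lambda}$ or $f^*_{\mathcal D,\lambda}$ exactly as you describe.

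One point in your sketch is not right. You say that after the half-angle split the cosine factors form ``a bounded cosine product'' and then apply Euler--Maclaurin only to $\spn{1/k}{\lfloor(N-k)/2\rfloor}$, yielding the $g_\ell$ sum. But the cosine product $\prod_m 2\cos(\pi(2m+1)/(2k))$ has $O(N)$ factors each bounded away from~$1$, so it is \emph{not} bounded: its logarithm is of size $N$ and it must receive its own Euler--Maclaurin treatment. This is precisely the origin of the second family $g^*_\ell(z)$ (built from $\cot(\pi(z-1)/2)$ rather than $\cot(\pi z)$) and of the two-sum structure of $v_{\mathcal D}(z;N,\sigma)$, with cutoffs $L$ and $L^*$. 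Without this, you will not reproduce $v_{\mathcal D}$ correctly, and the factor $(2\sin(\pi(z-1)/2))^{-1/2}$ in $q_{\mathcal D}$ (the leading term of the cosine-product expansion) will be unaccounted for. The correct reference for holomorphicity and boundedness on $\mathbb B_1$ is \cite[Thm.~7.5]{OS2}, not \cite[Thm.~5.4]{OS2}, for the same reason: the $\mathcal D$ case needs both $g_\ell$ and $g^*_\ell$ controlled.
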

\begin{proof}
For $N$ odd, the summands in \eqref{suma} satisfy the identity
\begin{multline}\label{nodd}
    Q_{\left(\frac{k-1}{2}\right) k\sigma}(N)= \frac{1}{k^2}
\exp\left( N\left[ \frac{\pi i}{4}\left(\frac{N}{k}+1+\frac{2k}N \right)\right]\right)
\\
\times
\exp\left( \frac{\pi i}{4}\left(\frac{N}{k}+3\right)\right)
\exp\left( \frac 1N\left[ -\pi i \sigma \frac{N}{k}\right]\right)
\spr{(k-1)/2k}{N-k}
\end{multline}
which is \cite[Eq. (7.2)]{OS2}. Next put $\hat{z}:=N/k$ and
\begin{equation} \label{dn2x}
    \mathcal D_2(N,\sigma)  :=  \frac {-2}{N^{1/2}} \Re \sum_{k  \text{ odd}\ : \ \hat{z} \in  \left(1.01, \ 1.49\right)}
     \frac{1}{k^2} \exp \bigl(-N \cdot p(\hat{z})/2 \bigr) q_{\mathcal D}(\hat{z})  \exp \bigl(v_\mathcal D(\hat{z};N,\sigma)\bigr)
\end{equation}
for $N$ odd. It is shown in \cite[Eq. (7.30)]{OS2} that $\mathcal D_1(N,\sigma)$ and $\mathcal D_2(N,\sigma)$ differ by at most $O(e^{0.05N/2})$ for an absolute implied constant. With $\sigma = -\lambda N$ we substitute
\begin{equation*}
     f_{\mathcal D, \lambda}(z)  \exp \bigl(v_\mathcal D(z;N,0)\bigr) = q_{\mathcal D}(z)  \exp \bigl(v_\mathcal D(z;N,\sigma).
\end{equation*}
in \eqref{dn2x}. It follows from \cite[Thm. 7.5]{OS2} that $f_{\mathcal D, \lambda}(z)  \exp \bigl(v_\mathcal D(z;N,0)\bigr)$ is holomorphic on a domain containing
$\mathbb B_1$ and
\begin{equation}\label{mrwd}
    f_{\mathcal D, \lambda}(z)  \exp \bigl(v_\mathcal D(z;N,0)\bigr) \ll 1 \quad \text{for} \quad z \in \mathbb B_1
\end{equation}
with an implied constant depending only on $\lambda^+$.
The proof in Section 7.2  of \cite{OS2} now goes through and we obtain our integral representation \eqref{shw3o}.
The case where $N$ is even is very similar - see Section 7.3  of \cite{OS2} - using
\begin{multline} \label{dn2xe}
    \mathcal D_2(N,\sigma)  :=  \frac {-2}{(N+1)^{1/2}} \Re \sum_{k  \text{ odd}\ : \ \hat{z} \in  \left(1.01, \ 1.49\right)}
     \frac{(-1)^{(k+1)/2}}{k^2} \\
     \times\exp \bigl(-(N+1) \left[\frac{p(\hat{z})}{2} +\frac{\pi i}{2\hat{z}}\right]\bigr) q^*_{\mathcal D}(\hat{z})  \exp \bigl(v_\mathcal D(\hat{z};N+1,\sigma)\bigr) \qquad (\text{$N$ even}).
\end{multline}
\end{proof}

\begin{prop} \label{dy}
Let $\lambda^+$ be a positive real number. Suppose $N \in \Z_{\gqs 1}$ and  $\lambda N \in \Z$ for $\lambda$ satisfying $|\lambda| \lqs \lambda^+$.
Then for an implied constant depending only on $\lambda^+$,
\begin{equation}\label{shw3}
    \mathcal D_1(N,-\lambda N)=O(e^{0.035N}).
\end{equation}
\end{prop}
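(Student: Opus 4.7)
The plan is to prove Proposition \ref{dy} by a direct saddle-point bound on the two integral representations \eqref{shw3o} and \eqref{shw3e} of $\mathcal D_1(N,-\lambda N)$. The crucial observation is that the exponential factor in both integrands is $\exp(-N p(z)/2)$ or $\exp(-(N+1)p(z)/2)$, using precisely the same function $p(z)$ from \eqref{p(z)} that appeared in Section \ref{klmx}. Consequently, the stationary points of the phase are the same, and in particular the saddle point $z_0 = 1+\log(1-w_0)/(2\pi i) \approx 1.18 + 0.26i$ together with the polygonal path $\mathcal P$ (shown in Figure \ref{pthpqs}) can be reused without modification. Because we seek only an upper bound rather than an asymptotic expansion, Perron's full machinery is not needed: we just move the contour through $z_0$ and bound the resulting integral.

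For $N$ odd, I would first deform the path in \eqref{shw3o} from $[1.01,1.49]$ to the polygonal path $\mathcal P$ through $z_0$. This is justified by Cauchy's theorem because $f_{\mathcal D,\lambda}(z)\exp(v_\mathcal D(z;N,0))$ is holomorphic on a domain containing $\mathbb B_1$ (cited from \cite[Thm. 7.5]{OS2}). I would then invoke \cite[Thm. 5.2]{OS1}, which gives $\Re(p(z)) > \Re(p(z_0))$ for all $z\in \mathcal P\setminus\{z_0\}$, and the uniform bound \eqref{mrwd}. Together these yield
\begin{equation*}
\left|\frac{1}{N^{3/2}}\int_\mathcal P e^{-Np(z)/2}\, f_{\mathcal D,\lambda}(z)\exp(v_\mathcal D(z;N,0))\, dz\right|
\ll \frac{e^{-N\Re(p(z_0))/2}}{N^{3/2}}
= \frac{|w_0|^{-N/2}}{N^{3/2}} = \frac{e^{UN/2}}{N^{3/2}}.
\end{equation*}
Since $U/2 \approx 0.0340381 < 0.035$, the right-hand side is $O(e^{0.035N})$, and the implied constant depends only on $\lambda^+$ through the bound \eqref{mrwd}.

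For $N$ even, the identical argument applies to \eqref{shw3e}, with $N$ replaced by $N+1$ in the exponential rate and with $f^*_{\mathcal D,\lambda}$ in place of $f_{\mathcal D,\lambda}$. The analog of \eqref{mrwd} for $f^*_{\mathcal D,\lambda}(z)\exp(v_\mathcal D(z;N+1,\lambda))$ on $\mathbb B_1$ follows in the same manner from \cite[Thm. 7.5]{OS2} (noting that the extra $\exp(-\pi i\lambda z)$ factor is uniformly bounded on $\mathbb B_1$ in terms of $\lambda^+$, and the term $\pi i\lambda z/(N+1)$ inside $v_\mathcal D$ is $O(1)$). The resulting bound is $e^{U(N+1)/2}/(N+1)^{3/2} \ll e^{0.035N}$. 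There is no serious obstacle: the reason Proposition \ref{dy} gives a smaller exponent than Proposition \ref{by} or Proposition \ref{cy} is precisely the halving of the rate due to the factor $1/2$ in the exponent, and the needed cushion $0.035 - U/2 \approx 0.001$ is enough to absorb the polynomial factor $N^{-3/2}$. One minor thing to double-check is that the path $\mathcal P$ lies entirely within $\mathbb B_1$ so that holomorphicity of the integrand along the deformation is preserved, but this has already been verified in \cite{OS1}.
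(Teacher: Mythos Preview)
Your proof is correct and follows essentially the same approach as the paper: deform the contour in \eqref{shw3o} and \eqref{shw3e} to the path $\mathcal P$ through $z_0$, use \eqref{aqw} to bound $\Re(p(z))$ on $\mathcal P$, and invoke \eqref{mrwd} (and its even-$N$ analog) to control the remaining factors, yielding $\mathcal D_1(N,-\lambda N)\ll e^{UN/2}$ with $U/2\approx 0.034<0.035$. Your write-up is more detailed than the paper's brief version but the argument is identical.
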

\begin{proof}
Change the path of integration in \eqref{shw3o} and \eqref{shw3e} to $\mathcal P$, as described in Section \ref{klmx}. We saw there that
\begin{equation*}
    \Re(p(z)-p(z_0))>0 \quad \text{for all} \quad z \in \mathcal P, \ z \neq z_0.
\end{equation*}
 It now follows from \eqref{mrwd} (and the analogous bound for $N$ even) that
\begin{equation}\label{dnn}
\mathcal D_1(N,-\lambda N) \ll e^{U N/2} \quad \text{for} \quad U :=-\Re(p(z_0))=-\log|w(0,-1)| \approx 0.0680762
\end{equation}
(as in \eqref{abuv}, \eqref{adzx}) and we have completed the proof.
\end{proof}

\subsection{Bounds for $h/k \in \mathcal E(N)$}
This section may be read alongside Section 8 of \cite{OS2}.
Put
\begin{equation} \label{e1su}
    \mathcal E_1(N,\sigma) := \sum_{h/k \in \mathcal E(N)} Q_{hk\sigma}(N) = 2 \Re \sum_{ \frac{N}{3}  <k \lqs \frac{N}{2}}  Q_{1k\sigma}(N).
\end{equation}
To express \eqref{e1su} as an integral we need the next definitions.
First set
\begin{equation*}
    \tilde{g}_\ell(z)  := \frac{B_{2\ell }}{(2\ell )!} \left( \pi z\right)^{2\ell -1}\left\{ \pi z \cot^{(2\ell -1)}\left(\pi z\right) +
    (2 \ell -1) \cot^{(2\ell -2)}\left(\pi z\right) \right\}.
\end{equation*}
Then we define the function $\phi_{\sigma, m}(z)$ in the following way:
\begin{align*}
    \phi_{\sigma, 0}(z) & :=\frac 1{4\pi^2} \left[\li(1)- \li(e^{2\pi i z}) +6\pi^2 -2\pi i z \log(1-e^{2\pi i z}) \right],\\
    \phi_{\sigma, 1}(z) & :=\frac{z^2 \cot(\pi z)}{4i} + \frac{z^2}{4} - \frac{5z}{4\pi i},\\
    \phi_{\sigma, 2}(z) & :=-\sigma z^2 + \frac{z \tilde{g}_1(z)}{2\pi i}
\end{align*}
and for $m \in \Z_{\gqs 3}$
\begin{equation*}
    \phi_{\sigma, m}(z)  := \begin{cases} z \tilde{g}_{m/2}(z)/(2\pi i) & \text{ if $m$ is even}\\
    0 & \text{ if $m$ is odd}.
    \end{cases}
\end{equation*}
Clearly, only $\phi_{\sigma, 2}(z)$ depends on $\sigma$. With $q_{\mathcal C}(z)$ given in \eqref{qbw}, put
\begin{equation}
    q_{\mathcal E}(z;N, \sigma)  :=   q_{\mathcal C}(z) \times  \sum_{\ell=0}^{2L-1} \frac{\phi_{\sigma, \ell}(z)}{N^{\ell}}  \label{qdx}
\end{equation}
for $L := \lfloor 0.006\pi e \cdot N/2\rfloor$ and finally define
\begin{equation}
    f_{\mathcal E,\lambda}(z;N)  := q_{\mathcal E}(z;N, -\lambda N) \exp(-2\pi i \lambda z ). \label{rez}
\end{equation}

\begin{prop} 
Let $\lambda^+$ be a positive real number. Suppose $N \in \Z_{\gqs 1}$ and  $\lambda N \in \Z$ for $\lambda$ satisfying $|\lambda| \lqs \lambda^+$.
Then for an implied constant depending only on $\lambda^+$,
\begin{equation}\label{shw4}
    \mathcal E_1(N,-\lambda N)=\frac{1}{N^{3/2}}  \Re \int_{2.01}^{2.49}
     \exp \bigl(-N \cdot p(z) \bigr) f_{\mathcal E,\lambda}(z;N) \exp \bigl(v_\mathcal C(z;N, 0)\bigr) \, dz + O\left(N e^{0.05N/2}\right).
\end{equation}
\end{prop}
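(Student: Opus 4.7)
The plan is to imitate the template used for the earlier propositions of this section: produce a closed-form expression for $Q_{1k\sigma}(N)$, apply Euler-Maclaurin summation to the sine-product factor, bundle the remaining polynomial corrections into the functions $\phi_{\sigma,\ell}(z)$, and finally pass from the sum over $k$ to a Riemann integral. The essential new feature is that here $s := \lfloor N/k \rfloor = 2$ for every $k \in (N/3, N/2]$, so the pole of $Q(z; N,\sigma)$ at $z = 1/k$ has order two; this forces an additional factor accounting for the derivative in the residue computation, which is precisely what the non-dilogarithm pieces of $\phi_{\sigma, 0}(z)$ and the higher $\phi_{\sigma, \ell}(z)$ are designed to encode.

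First, starting from \eqref{wavek} (or directly from a double-pole residue of $Q(z;N,\sigma)$ at $z = 1/k$), I would obtain a formula of the schematic shape
\[
Q_{1k\sigma}(N) = \frac{1}{k^2}\exp\Bigl(N\bigl[\tfrac{\pi i}{2}(-N/k + \text{const}) + \tfrac{2\pi i \sigma}{N}\cdot\tfrac{N}{k}\bigr]\Bigr)\,\Psi(N/k, \sigma, N)\,\spr{1/k}{N-2k},
\]
where $\Psi$ is an explicit rational expression in $\sigma$, $1/N$, and trigonometric values at $N/k$ produced by the order-two residue. Next, Theorem 4.1 of \cite{OS1} applied to $\spr{1/k}{N-2k}$ converts the sine product, for $\hat{z}:=N/k\in(2.01,2.49)$, into $\exp(-N p(\hat z))\cdot q_{\mathcal C}(\hat z)\exp(v_{\mathcal C}(\hat z; N, 0))$ with error $O(e^{0.05N/2})$ per term. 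The leftover polynomial corrections from $\Psi$, together with the higher-order Euler-Maclaurin terms, are then repackaged as $\sum_{\ell=0}^{2L-1}\phi_{\sigma,\ell}(z)/N^\ell$, which is precisely the design of $q_{\mathcal E}(z; N, \sigma)$ in \eqref{qdx}.

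The specialization $\sigma = -\lambda N$ touches only $\phi_{\sigma,2}(z) = -\sigma z^2 + z\tilde g_1(z)/(2\pi i)$: after the attached $1/N^2$ weight this contributes $\lambda z^2/N + O(1/N^2)$, which is uniformly bounded for $|\lambda|\le\lambda^+$ and $z$ in any fixed compact set. Extracting the $\exp(-2\pi i\lambda z)$ factor gives $f_{\mathcal E,\lambda}(z;N)$ as in \eqref{rez}, and the key analytic input is the uniform bound
\[
f_{\mathcal E, \lambda}(z; N) \exp\bigl(v_{\mathcal C}(z; N, 0)\bigr) \ll 1 \quad \text{for} \quad z \in \mathbb B_2,
\]
with implied constant depending only on $\lambda^+$. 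This reduces to checking that each $\phi_{\sigma,\ell}$ is holomorphic and individually bounded on $\mathbb B_2$ (which follows from the holomorphy of $\li$, $\log(1-e^{2\pi i z})$, and $\cot^{(r)}(\pi z)$ on $\mathbb B_2$ together with the standard decay of Bernoulli numbers in $\tilde g_\ell$), and then summing the finite series.

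Finally, a Riemann-sum comparison as in the proofs of Propositions 5.6 and 5.7 of \cite{OS2} converts the sum over $k$ with $\hat z\in(2.01,2.49)$ into the integral on the right of \eqref{shw4}; the residual terms with $\hat z\in (2,2.01]\cup[2.49,3)$ are absorbed into the error by the sine-product bound \eqref{ott1}. The additional factor of $N$ in the error $O(N e^{0.05N/2})$, compared to Propositions \ref{by}--\ref{dy}, enters because $q_{\mathcal E}(z;N,\sigma)$ carries $L=O(N)$ terms $\phi_{\sigma,\ell}(z)/N^\ell$ written out explicitly rather than packed inside a single bounded exponential as in $\exp(v_{\mathcal C}(z;N,0))$. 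The main obstacle I foresee is the uniform-in-$\lambda$ control of this entire $\phi_{\sigma,\ell}$ expansion when $\sigma = -\lambda N$, and in particular verifying that the linear-in-$\sigma$ term of $\phi_{\sigma,2}$ is genuinely cancelled by its $1/N^2$ weight so that no growth in $N$ leaks into the integrand beyond the claimed error.
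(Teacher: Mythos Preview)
Your outline matches the paper's proof closely: start from the explicit double-pole residue formula for $Q_{1k\sigma}(N)$ (the paper quotes this from \cite[Eqs.~(8.3),(8.4)]{OS2} rather than \eqref{wavek}), apply Euler--Maclaurin to the sine product, package the derivative corrections into the $\phi_{\sigma,\ell}$ series, specialize $\sigma=-\lambda N$ and absorb the $\lambda z^2/N$ shift into $\phi^*_{\lambda,1}$, then convert the sum to an integral. The concern you flag at the end is exactly the one the paper addresses, via an explicit lemma showing $\phi(N,k,-\lambda N)=O(N)$ with implied constant depending only on $\lambda^+$; this crude bound on $\phi$ (coming from the cotangent sum in \eqref{pink}, not from the $\sigma$-term) is what is needed for the boundary pieces $\hat z\in(2,2.01]\cup[2.49,3)$, and is also the true source of the extra factor $N$ in the error --- not, as you suggest, the $O(N)$ length of the $\phi_{\sigma,\ell}$ series, which sums to $O(1)$.
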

\begin{proof}
For  $N/3<k\lqs N/2$ and $\sigma \in \Z$, we know by \cite[Eqs. (8.3), (8.4)]{OS2}
that
\begin{multline} \label{cyan}
     Q_{1k\sigma}(N) =\frac{1}{2k^2} \phi(N,k,\sigma)
\exp\left( N\left[ \frac{-i\pi}{2}\left(\frac{N}{k}-1 +2\frac k N \right)\right]\right)\\
\times
\exp\left( \frac{-i\pi}2 \frac{N}{k}\right)
\exp\left( \frac 1N\left[ 2 i\pi \sigma\frac{N}{k}\right]\right)
 \spr{1/k}{N-2k}
\end{multline}
where
\begin{equation} \label{pink}
    \phi(N,k,\sigma) := \frac{1}{4k^2}(N^2+N-4\sigma) + \frac{1}{2\pi ik}\sum_{1\lqs j\lqs N,\ k \nmid j} \frac{\pi j}{k} \cot \left(\frac{\pi j}{k}\right).
\end{equation}
Let $\hat{z}=N/k$ again.
It is shown in \cite[Sections 8.3, 8.4]{OS2} that
\begin{equation*}
    \phi(N,k,\sigma) = \sum_{\ell=0}^{2L-1} \frac{\phi_{\sigma, \ell}(\hat{z})}{N^{\ell}} + \frac{ \varepsilon_L(N-2k,1/k) }{2\pi i k}
\end{equation*}
with error term $\varepsilon_L$ satisfying
\begin{align}
 \spr{1/k}{N-2k} \frac{ \varepsilon_L(N-2k,1/k) }{2\pi i k}  & = O(N e^{0.05N/2}), \label{slmad}\\
    \frac{ \varepsilon_L(N-2k,1/k) }{2\pi i k}  & = O(N) \label{slmbd}
\end{align}
where $L = \lfloor 0.006\pi e \cdot N/2\rfloor$ as in the definition of $q_{\mathcal E}$.
Recalling \eqref{vbw}, we next set
\begin{equation} \label{sroe}
    \mathcal E_2(N, \sigma)  := \frac{1}{N^{1/2}}
    \Re \sum_{k \ : \  \hat{z} \in  (2.01 ,  2.49)}
    \frac{1}{k^2} \exp \bigl(-N (p(\hat{z})-2\pi i/\hat{z})\bigr)
    q_{\mathcal E}(\hat{z};N, \sigma) \exp \bigl(v_\mathcal C(\hat{z};N, \sigma)\bigr).
\end{equation}
It is proved in \cite[Prop. 8.5]{OS2} (using \eqref{slmad} and \eqref{slmbd}) that $\mathcal E_2(N, \sigma) = \mathcal E_1(N, \sigma)+ O(N e^{0.05 N/2})$ for an implied constant depending only on $\sigma$. To adapt this to the case that $\sigma = -\lambda N$, we need the next result.

\begin{lemma} \label{phib}
For $N/3<k\lqs N/2$, $|\lambda| \lqs \lambda^+$ and an  implied constant depending only on $\lambda^+$
\begin{equation*}
    \phi(N,k,-\lambda N)=O(N).
\end{equation*}
\end{lemma}
\begin{proof}
Verify that $|\cot(\pi j/k)| < 2k/\pi$ when $k \nmid j$. This implies
\begin{equation*}
    \phi(N,k,-\lambda N) \ll 1+\frac{|\lambda| N}{k^2} + \frac{1}{k} \sum_{j=1}^N \frac jk k \ll   \frac{|\lambda|}{N}+N. \qedhere
\end{equation*}
\end{proof}

The  proof of  \cite[Prop. 8.5]{OS2}  with $\sigma = -\lambda N$ and the bound from Lemma \ref{phib} now shows that
\begin{equation*}
    \mathcal E_2(N, -\lambda N) = \mathcal E_1(N, -\lambda N)+ O(N e^{0.05 N/2})
\end{equation*}
for an implied constant depending only on $\lambda^+$. Replacing $\sigma$ by $-\lambda N$ also lets us rewrite $q_{\mathcal E}$ as
\begin{equation*}
    q_{\mathcal E}(z;N, -\lambda N)  =   q_{\mathcal C}(z) \times  \sum_{\ell=0}^{2L-1} \frac{\phi^*_{\lambda, \ell}(z)}{N^{\ell}} 
\end{equation*}
with
\begin{equation}\label{phis}
    \phi^*_{\lambda, \ell}(z):= \begin{cases}
    \phi_{0, \ell}(z) & \text{ if }\quad \ell \in \Z_{\gqs 0}, \ell \neq 1\\
    \phi_{0, 1}(z)+\lambda z^2 &  \text{ if }\quad  \ell =1.
    \end{cases}
\end{equation}
In this way, $\phi^*_{\lambda, \ell}(z)$ is always independent of $N$, and only  depends on $\lambda$ when $\ell=1$.
We easily obtain
\begin{equation*}
    \mathcal E_2(N, -\lambda N)  = \frac{1}{N^{1/2}}
    \Re \sum_{k \ : \  \hat{z} \in  (2.01 ,  2.49)}
    \frac{1}{k^2} \exp \bigl(-N (p(\hat{z})-2\pi i/\hat{z})\bigr)
    f_{\mathcal E,\lambda}(\hat{z};N) \exp \bigl(v_\mathcal C(\hat{z};N, 0)\bigr).
\end{equation*}
With the help of \cite[Lemma 8.6]{OS2} we see that $f_{\mathcal E,\lambda}(z;N) \exp \bigl(v_\mathcal C(z;N, 0)\bigr)$ is holomorphic on a domain containing
$\mathbb B_2$ and that
\begin{equation}\label{mrwe}
    f_{\mathcal E,\lambda}(z;N) \exp \bigl(v_\mathcal C(z;N, 0)\bigr) \ll 1 \quad \text{for} \quad z \in \mathbb B_2
\end{equation}
with an implied constant depending only on $\lambda^+$.
The proof in Section 8.4  of \cite{OS2} now goes through and we obtain our integral representation \eqref{shw4}.
\end{proof}

\begin{prop} \label{ey}
Let $\lambda^+$ be a positive real number. Suppose $N \in \Z_{\gqs 1}$ and  $\lambda N \in \Z$ for $\lambda$ satisfying $|\lambda| \lqs \lambda^+$.
Then for an implied constant depending only on $\lambda^+$,
\begin{equation}\label{shw4ex}
    \mathcal E_1(N,-\lambda N)=O(e^{0.026N}).
\end{equation}
\end{prop}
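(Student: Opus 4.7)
The plan is to mimic exactly the saddle-point argument that established Propositions \ref{cy} and \ref{dy}, starting from the integral representation \eqref{shw4}. First I would move the path of integration from the real segment $[2.01, 2.49]$ to the polygonal path $\mathcal Q$ (the same path used in the proof of Proposition \ref{cy}) connecting $2.01 \to 2.01 c \to 2.49 c \to 2.49$ with $c = 1 + i \Im(z_1)/\Re(z_1)$ and $z_1 = 2 + \log(1-w(0,-2))/(2\pi i)$, which passes through the saddle point of $p(z)$ in the strip $1.5 < \Re(z) < 2.5$ guaranteed by Theorem \ref{disol}. Cauchy's theorem permits this deformation because the integrand is holomorphic on a domain containing $\mathbb B_2$, which follows from \eqref{mrwe}.

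Next I would invoke the descent estimate \eqref{aqwc} established in the proof of Proposition \ref{cy}, namely
\begin{equation*}
    \Re(p(z) - p(z_1)) > 0 \quad \text{for all } z \in \mathcal Q, \ z \neq z_1,
\end{equation*}
together with the uniform bound \eqref{mrwe} for $f_{\mathcal E,\lambda}(z;N)\exp(v_\mathcal C(z;N,0))$ on $\mathbb B_2$, where the implied constant depends only on $\lambda^+$. The length of $\mathcal Q$ is bounded absolutely, so the integral in \eqref{shw4} is at most a constant (depending on $\lambda^+$) times $e^{-N \Re(p(z_1))}$. Recalling from \eqref{cnn2} that
\begin{equation*}
    -\Re(p(z_1)) = -\log|w(0,-2)| \approx 0.0256706,
\end{equation*}
we obtain, for the integral contribution in \eqref{shw4},
\begin{equation*}
    \frac{1}{N^{3/2}} \int_{\mathcal Q} \exp(-N \cdot p(z)) f_{\mathcal E,\lambda}(z;N) \exp(v_\mathcal C(z;N,0))\, dz \ll e^{0.0257 N}.
\end{equation*}

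Finally, combining this with the error term $O(N e^{0.05 N/2}) = O(N e^{0.025 N})$ from \eqref{shw4}, both contributions are comfortably less than $e^{0.026 N}$ for all sufficiently large $N$ (and the finitely many small $N$ can be absorbed into the implied constant). This yields \eqref{shw4ex}. The one subtlety worth double-checking is that the rewriting of $q_\mathcal E$ via \eqref{phis} leaves $f_{\mathcal E,\lambda}(z;N)$ with $N$-dependence only through a polynomial in $1/N$ whose coefficients are holomorphic and absolutely bounded on $\mathbb B_2$ uniformly in $\lambda$ for $|\lambda| \lqs \lambda^+$; this is exactly the content of \eqref{mrwe} and requires no new work. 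There is no genuine obstacle beyond tracking the dependence on $\lambda$ through the factor $\exp(-2\pi i\lambda z)$ and the single $\lambda$-dependent term $\phi^*_{\lambda,1}(z) = \phi_{0,1}(z) + \lambda z^2$, both of which are already controlled on $\mathbb B_2$ by $\lambda^+$.
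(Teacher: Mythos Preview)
Your proposal is correct and follows essentially the same approach as the paper's proof: deform the integral in \eqref{shw4} to the path $\mathcal Q$ through the saddle point $z_1$, apply \eqref{aqwc} and \eqref{mrwe} to bound the integrand by a constant times $e^{-N\Re(p(z_1))}$ with $-\Re(p(z_1))=-\log|w(0,-2)|\approx 0.0257$, and absorb the $O(Ne^{0.025N})$ error term. The paper's version is simply more terse.
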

\begin{proof} As in the first part of the proof of Proposition \ref{cy}, we move the path of integration of \eqref{shw4} to $\mathcal Q$ passing through
the saddle-point   $z_1$. As we have seen in \eqref{aqwc} and \eqref{cnn2}, $$-\Re(p(z)) \lqs -\Re(p(z_1)) = -\log|w(0,-2)| \approx 0.0256706$$ for $z \in \mathcal Q$. Combining this bound with \eqref{mrwe} completes the proof.
\end{proof}

This last proposition completes the proof of Theorem \ref{thmb}. This finishes the proof of Theorem \ref{ma2} as well.


\section{Asymptotic expansions for $\mathcal C_1$, $\mathcal D_1$ and $\mathcal E_1$} \label{sec7}
We may continue the analysis of $\mathcal C_1(N,-\lambda N)$, $\mathcal D_1(N,-\lambda N)$ and $\mathcal E_1(N,-\lambda N)$ to obtain their asymptotic expansions. While not necessary to prove Theorem \ref{thmb}, this allows us to numerically verify  our work, as we see in Tables \ref{c2n1} - \ref{e1n1}, and also suggests some further results that we describe in Section \ref{fur}.

Recall the dilogarithm zeros $w_0=w(0,-1),$ $w(0,-2)$ and $w(1,-3)$ from Section \ref{dise}. Starting with Proposition \ref{pxc2}, a similar proof to Theorem \ref{thma} shows the asymptotic expansion of $\mathcal C_2(N,-\lambda N)$, the first component of $\mathcal C_1(N,-\lambda N)$. Details are much the same as  \cite[Sect. 5.4]{OS2}. The functions $u_{0,j}$ and $f_{\mathcal C,\lambda}$ are defined in \eqref{uiz} and \eqref{rcz}, respectively.

\begin{theorem} \label{c2se} Let $\lambda^+$ be a positive real number. Suppose $N \in \Z_{\gqs 1}$ and  $\lambda N \in \Z$ for $\lambda$ satisfying $|\lambda| \lqs \lambda^+$.
We have
\begin{equation} \label{presc}
    \mathcal C_2(N,-\lambda N) = \Re\left[\frac{w(0,-2)^{-N}}{N^{2}} \left( c_{0}(\lambda)+\frac{c_{1}(\lambda)}{N}+ \dots +\frac{c_{m-1}(\lambda)}{N^{m-1}}\right)\right] + O\left(\frac{|w(0,-2)|^{-N}}{N^{m+2}}\right)
\end{equation}
for an implied constant depending only on  $\lambda^+$ and $m$. The functions $c_{t}(\lambda)$ are given as follows, where $\alpha$ depends on $p$ with saddle-point $z_1$,
\begin{equation} \label{cso}
    c_{t}(\lambda) = \sum_{s=0}^t \G\left(s+\frac 12\right) \alpha_{2s}(f_{\mathcal C,\lambda} \cdot u_{0,t-s}).
\end{equation}
\end{theorem}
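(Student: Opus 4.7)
The plan is to follow the same structure as the proof of Theorem \ref{thma} (given in Section \ref{klmx}), but starting from the integral representation of Proposition \ref{pxc2} rather than that of $\mathcal A_3$, and with saddle-point $z_1$ instead of $z_0$. First, I would move the path of integration in \eqref{oso} from the real segment $[2.01,2.49]$ to the polygonal contour $\mathcal Q$ with vertices $2.01,\ 2.01c,\ 2.49c,\ 2.49$ where $c = 1+i\,\Im(z_1)/\Re(z_1)$, so that $\mathcal Q$ passes through the saddle-point $z_1 = 2 + \log(1-w(0,-2))/(2\pi i)$. Cauchy's theorem justifies the deformation since the integrand is holomorphic on a domain containing $\mathbb B_2$, and \eqref{aqwc} gives $\Re(p(z)-p(z_1))>0$ on $\mathcal Q\setminus\{z_1\}$.

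Next I would expand $\exp(v_\mathcal C(z;N,0))$ into an asymptotic series in $1/N$, using the analog of Proposition \ref{gas} adapted to $v_\mathcal C$ from \eqref{vbw}. This yields, for $z \in \mathbb B_2$ and $1\lqs d \lqs 2L-1$,
\begin{equation*}
\exp\bigl(v_\mathcal C(z;N,0)\bigr)=\sum_{j=0}^{d-1}\frac{u_{0,j}(z)}{N^{j}}+\zeta_d(z;N,0),\qquad \zeta_d(z;N,0)=O(1/N^{d}),
\end{equation*}
with the $u_{0,j}$ given by \eqref{uiz}. Inserting this expansion splits the integral on $\mathcal Q$ into a main sum indexed by $j$ plus a remainder. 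The remainder is bounded trivially using \eqref{soot} together with the fact that $-\Re(p(z_1))=-\log|w(0,-2)|$, producing an error of size $|w(0,-2)|^{-N}/N^{d+3/2}$.

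For each of the main terms I would invoke Theorem \ref{sdle} with $\mu=2$, $p_0 = p''(z_1)/2$, steepest-descent index $k=0$, and $q(z)=f_{\mathcal C,\lambda}(z)\cdot u_{0,j}(z)$. Perron's formula gives
\begin{equation*}
\int_{\mathcal Q} e^{-N\cdot p(z)} f_{\mathcal C,\lambda}(z) u_{0,j}(z)\,dz = e^{-N\cdot p(z_1)}\sum_{s=0}^{M-1}\G\!\left(s+\tfrac12\right)\frac{2\,\alpha_{2s}(f_{\mathcal C,\lambda}\cdot u_{0,j})}{N^{s+1/2}} + O\!\left(\frac{K}{N^{M+1/2}}\right),
\end{equation*}
where $e^{-N p(z_1)}=w(0,-2)^{-N}$ by \eqref{pzlogw} and $K$ can be taken independent of $\lambda$ by Proposition \ref{albnd} and \eqref{soot}. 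Multiplying by the prefactor $1/(2N^{3/2})$, taking real parts, collecting all contributions with $s+j=t$, and choosing $M=d$ so that every error term is at most $O(|w(0,-2)|^{-N}/N^{d+3/2})$, one obtains
\begin{equation*}
\mathcal C_2(N,-\lambda N) = \Re\!\left[w(0,-2)^{-N}\sum_{t=0}^{d-2}\frac{1}{N^{t+2}}\sum_{s=0}^{t}\G\!\left(s+\tfrac12\right)\alpha_{2s}(f_{\mathcal C,\lambda}\cdot u_{0,t-s})\right] + O\!\left(\frac{|w(0,-2)|^{-N}}{N^{d+1}}\right),
\end{equation*}
which is \eqref{presc} with $c_t(\lambda)$ as in \eqref{cso} after relabeling $d-1 \mapsto m$.

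The routine but bookkeeping-heavy step will be verifying the $v_\mathcal C$ analog of Proposition \ref{gas} (so that the $u_{0,j}$ indeed coincide with the definition from \eqref{uiz}) together with uniformity of the implied constants in $\lambda$; this uniformity follows because $\exp(-2\pi i\lambda z)$ is bounded on $\mathbb B_2$ in terms of $\lambda^+$ alone, and both Perron's theorem and Proposition \ref{albnd} give errors depending only on $p,z_1$ and the sup-norm of $q$. The genuine obstacle, already handled in \cite{OS2}, is merely to check that $f_{\mathcal C,\lambda}\exp(v_\mathcal C(\cdot;N,0))$ extends holomorphically to the required neighborhood of $\mathcal Q$; once that is in place the argument is parallel to Theorem \ref{thma}.
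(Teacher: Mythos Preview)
Your proposal is correct and follows essentially the same approach as the paper, which simply states that ``a similar proof to Theorem \ref{thma} shows the asymptotic expansion of $\mathcal C_2(N,-\lambda N)$'' starting from Proposition \ref{pxc2}, with details referred to \cite[Sect. 5.4]{OS2}. Your bookkeeping is right: the prefactor $1/(2N^{3/2})$ together with $\Re$ (in place of $2/N^{3/2}$ and $\Im$ in the $\mathcal A$-case) exactly produces the coefficients $c_t(\lambda)=\sum_{s=0}^{t}\Gamma(s+\tfrac12)\alpha_{2s}(f_{\mathcal C,\lambda}\cdot u_{0,t-s})$ with no additional factor, and since $v_\mathcal C(z;N,0)$ differs from $v(z;N,0)$ only in the truncation index $L$, the same functions $u_{0,j}$ from \eqref{uiz} appear in its $1/N$-expansion.
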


\begin{table}[ht]
{\footnotesize
\begin{center}
\begin{tabular}{cc|cccc|c}
$N$ & $\lambda$ & $m=1$ & $m=2$ & $m=3$ &  $m=5$ & $\mathcal C_2(N, -\lambda N)$  \\ \hline
$1200$ & $1/3$ & $\phantom{-}3.82015\times 10^7$ &  $\phantom{-}3.94694 \times 10^7$ & $\phantom{-}3.92935 \times 10^7$ &   $\phantom{-}3.93016 \times 10^7$ &  $\phantom{-}3.93016 \times 10^7$ \\
$1200$ & $1$ & $-7.55408\times 10^7$ &  $-9.44498 \times 10^7$ & $-8.92043 \times 10^7$ &  $-8.98686 \times 10^7$ &  $-8.98650 \times 10^7$ \\
$1200$ & $2$ & $-4.19152\times 10^9$ &  $-3.06226 \times 10^9$ & $-3.20956 \times 10^9$ &  $-3.20879 \times 10^9$ &  $-3.21242 \times 10^9$
\end{tabular}
\caption{The approximations of Theorem \ref{c2se} to $\mathcal C_2(N,-\lambda N)$.} \label{c2n1}
\end{center}}
\end{table}

A comparison of both sides of \eqref{presc} in Theorem \ref{c2se} with  some different values of $N,$  $\lambda$ and  $m$ is shown in Table \ref{c2n1}.
The asymptotic expansion of $\mathcal C^*_2(N,-\lambda N)$, the second component of $\mathcal C_1(N,-\lambda N)$, is given next based on a similar development in \cite[Sect. 6.4]{OS2}. Recall $f^*_{\mathcal C,\lambda}$ from \eqref{fcsl} and $g_{\mathcal C,\ell}$ from \eqref{gazi}. Set $u^*_{\sigma,  0}:=1$ and for $j \in \Z_{\gqs 1}$ put
\begin{equation*}
    u^*_{\sigma, j}(z):=\sum_{m_1+3m_2+5m_3+ \dots =j}\frac{(\pi i (16\sigma+1)z/8 +g_{\mathcal C, 1}(z))^{m_1}}{m_1!}\frac{g_{\mathcal C,  2}(z)^{m_2}}{m_2!} \cdots \frac{g_{\mathcal C, j}(z)^{m_j}}{m_j!}.
\end{equation*}

\begin{theorem} \label{c2ses} Let $\lambda^+$ be a positive real number. Suppose $N \in \Z_{\gqs 1}$ and  $\lambda N \in \Z$ for $\lambda$ satisfying $|\lambda| \lqs \lambda^+$.
We have
\begin{equation} \label{prescs}
    \mathcal C^*_2(N,-\lambda N) = \Re\left[\frac{w(1,-3)^{-N}}{N^{2}} \left( c^*_{0}(\lambda)+\frac{c^*_{1}(\lambda)}{N}+ \dots +\frac{c^*_{m-1}(\lambda)}{N^{m-1}}\right)\right] + O\left(\frac{|w(1,-3)|^{-N}}{N^{m+2}}\right)
\end{equation}
for an implied constant depending only on  $\lambda^+$ and $m$. The functions $c^*_{t}(\lambda)$ are given as follows, where $\alpha$ depends on $p_1$ with saddle-point $z_3$,
\begin{equation*}
    c^*_{t}(\lambda) = \frac{w(1,-3)^{-1/2}}{2} \sum_{j=0}^t (-2)^{j-t}\binom{t+1}{j+1} \sum_{s=0}^j \G\left(s+\frac 12\right) \alpha_{2s}(i f^*_{\mathcal C,\lambda} \cdot u^*_{\lambda/2,j-s}).
\end{equation*}
\end{theorem}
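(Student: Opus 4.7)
\medskip

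\textbf{Proof proposal for Theorem \ref{c2ses}.}

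The starting point is the integral representation \eqref{intc2s} for $\mathcal C^*_2(N,-\lambda N)$, which expresses it (up to a negligible error $O(e^{0.05N/3})$) as $\frac{-1}{4(N+1/2)^{3/2}}\Im$ of an integral from $3.01$ to $3.49$ whose integrand is $\exp\bigl(-(N+1/2) p_1(z)\bigr) f^*_{\mathcal C,\lambda}(z)\exp\bigl(v_\mathcal C^*(z;N,\lambda/2)\bigr)$. I would mirror the strategy used for $\mathcal C_2$ in Theorem \ref{c2se} (and ultimately for $\mathcal A_1$ in Theorem \ref{thma}), but with the role of $N$ played by $N+1/2$, the role of $p$ by $p_1$, the saddle-point $z_0$ replaced by $z_3$, and the auxiliary functions $u_{0,j}$, $g_\ell$ replaced by $u^*_{\sigma,j}$, $g_{\mathcal C,\ell}$.

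\emph{Step 1 (contour shift).} Push the segment $[3.01,3.49]$ to the polygonal path $\mathcal S$ through $z_3$, which is legal since the integrand is holomorphic on a neighborhood of $\mathbb B_3$ (the bound \eqref{mrw} is all we need for the vertical legs). On $\mathcal S$ we already know $\Re(p_1(z)-p_1(z_3))>0$ off the saddle.

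\emph{Step 2 (asymptotic expansion of $v_\mathcal C^*$).} Specialize Lemma \ref{acdcy} at $\sigma=\lambda/2$ and exponentiate. Because $v_\mathcal C^* = \frac{\pi i(16\sigma+1)z}{8(N+1/2)} + \sum_{\ell\ge 1} g_{\mathcal C,\ell}(z)/(N+1/2)^{2\ell-1} + O(N^{-(2d-1)})$, grouping by powers of $(N+1/2)^{-1}$ and using the multinomial structure of $\exp$ produces exactly the coefficients $u^*_{\lambda/2,j}(z)$, with the multi-index condition $m_1+3m_2+5m_3+\cdots = j$ coming from the tower of powers $(N+1/2)^{-(2\ell-1)}$ (the $\ell=1$ slot absorbing the linear-in-$z$ term). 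Thus
\[
\exp\bigl(v_\mathcal C^*(z;N,\lambda/2)\bigr) = \sum_{j=0}^{d-1} \frac{u^*_{\lambda/2,j}(z)}{(N+1/2)^j} + O\!\left(\frac{1}{(N+1/2)^d}\right),
\]
uniformly on $\mathbb B_3$.

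\emph{Step 3 (saddle-point).} For each $j$, apply Theorem \ref{sdle} to $\int_{\mathcal S} e^{-(N+1/2)p_1(z)} f^*_{\mathcal C,\lambda}(z) u^*_{\lambda/2,j}(z)\, dz$ with $\mu=2$ and sector index $k=0$ matching the steepest-descent angle for $p_1$ at $z_3$. The exponential prefactor becomes $e^{-(N+1/2)p_1(z_3)} = w(1,-3)^{-N-1/2}$ by Theorem \ref{disol}, producing the $w(1,-3)^{-1/2}$ factor inside $c^*_t(\lambda)$. The $\Im$ in \eqref{intc2s} pairs with the leading $-1/4$ to give the $\Re$-in-front-of-$w(1,-3)^{-N}$ form; the extra $i$ is absorbed into the argument of $\alpha_{2s}$, which is why the theorem states $\alpha_{2s}(i f^*_{\mathcal C,\lambda}\cdot u^*_{\lambda/2,j-s})$. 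Combining Steps 2 and 3 gives an expansion of the form
\[
\mathcal C^*_2(N,-\lambda N) = \Re\!\left[\frac{w(1,-3)^{-N}}{(N+1/2)^2}\sum_{j=0}^{m-1} \frac{A_j(\lambda)}{(N+1/2)^j}\right] + O\!\left(\frac{|w(1,-3)|^{-N}}{N^{m+2}}\right),
\]
with
\[
A_j(\lambda) = \frac{w(1,-3)^{-1/2}}{2} \sum_{s=0}^{j} \G\!\left(s+\tfrac12\right)\alpha_{2s}\bigl(i f^*_{\mathcal C,\lambda}\cdot u^*_{\lambda/2,j-s}\bigr).
\]

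\emph{Step 4 (switch from $(N+1/2)^{-1}$ to $N^{-1}$).} The remaining nuisance is that the theorem asks for a series in $N^{-1}$, not $(N+1/2)^{-1}$. Writing $(N+1/2)^{-j-2}=N^{-j-2}(1+1/(2N))^{-j-2}$ and expanding binomially, the coefficient of $N^{-t-2}$ is
\[
\sum_{j=0}^{t}\binom{-j-2}{t-j}(2N)^{-(t-j)} \cdot A_j(\lambda) \cdot N^{t-j}
= \sum_{j=0}^{t}(-2)^{j-t}\binom{t+1}{j+1}A_j(\lambda),
\]
using $\binom{-j-2}{t-j}=(-1)^{t-j}\binom{t+1}{t-j}=(-1)^{t-j}\binom{t+1}{j+1}$. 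This precisely matches the claimed formula for $c^*_t(\lambda)$. The truncation error, being $O(|w(1,-3)|^{-N}/N^{m+2})$, is preserved under this reindexing because Proposition \ref{albnd} uniformly bounds the $\alpha_{2s}$ in terms of an absolute constant depending only on $\lambda^+$.

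\emph{Expected obstacle.} The routine but bookkeeping-heavy part is Step 4, together with checking that the error terms (both from Step 2's truncation and from the saddle-point remainder in Step 3) all sit at the same level $O(|w(1,-3)|^{-N}/N^{m+2})$ after the binomial reindexing; one must pick $d$ and the saddle-point truncation index $M$ together (as in the passage from \eqref{wmand} to \eqref{jmp} of the proof of Theorem \ref{thma}) so that the aggregated error from the double sum is controlled. The rest of the argument is a direct transplant of the $\mathcal A_1$ and $\mathcal C_2$ proofs, only with $p_1$, $z_3$, $w(1,-3)$ and $N+1/2$ in place of the principal-branch data.
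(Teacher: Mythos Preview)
Your proposal is correct and follows essentially the same route the paper indicates (it defers to \cite[Sect.~6.4]{OS2} for the details): start from the integral representation \eqref{intc2s}, shift to $\mathcal S$ through $z_3$, expand $\exp(v_\mathcal C^*)$ via Lemma \ref{acdcy} to produce the $u^*_{\lambda/2,j}$, apply Theorem \ref{sdle} with $p_1$ and parameter $N+1/2$, and then rebase from $(N+1/2)^{-1}$ to $N^{-1}$ by the binomial identity you wrote down. Your bookkeeping in Steps 3 and 4 (the emergence of $w(1,-3)^{-1/2}$, the absorption of $i$ into $\alpha_{2s}$, and the $(-2)^{j-t}\binom{t+1}{j+1}$ reindexing) is exactly right.
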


\begin{table}[ht]
{\footnotesize
\begin{center}
\begin{tabular}{cc|cccc|c}
$N$ & $\lambda$ & $m=1$ & $m=2$ & $m=3$ &  $m=5$ & $\mathcal C^*_2(N, -\lambda N)$  \\ \hline
$1200$ & $1/3$ & $-3.41978 \times 10^{11}$ &  $-3.38619 \times 10^{11}$ & $-3.38622 \times 10^{11}$ &  $-3.38622 \times 10^{11}$ &  $-3.38622 \times 10^{11}$\\
$1200$ & $1$ & $-5.45974 \times 10^{11}$ &  $-5.36354 \times 10^{11}$ & $-5.36405 \times 10^{11}$ &  $-5.36404 \times 10^{11}$ &  $-5.36404 \times 10^{11}$ \\
$1200$ & $2$ & $-5.14590 \times 10^{11}$ &  $-5.00599 \times 10^{11}$ & $-5.00740 \times 10^{11}$ &  $-5.00737 \times 10^{11}$ &  $-5.00737 \times 10^{11}$
\end{tabular}
\caption{The approximations of Theorem \ref{c2ses} to $\mathcal C^*_2(N,-\lambda N)$.} \label{c2sn1}
\end{center}}
\end{table}

The detailed expansion of $\mathcal D_1(N,-\lambda N)$ is derived similarly to Section 7 of \cite{OS2}. We need $f_{\mathcal D,\lambda},$ $f^*_{\mathcal D,\lambda}$ from \eqref{fdls}. Also set
\begin{equation*} \label{gazid}
    g_{\mathcal D, \ell}(z):=g_\ell(z) - g_\ell^*(z)+2^{2\ell-1}\bigl(2g_\ell^*(z) - g_\ell(z)\bigr)
\end{equation*}
and
\begin{equation*}
    u_{\mathcal D, \sigma, j}(z):=\sum_{m_1+3m_2+5m_3+ \dots =j}\frac{(\pi i \sigma z +g_{\mathcal D, 1}(z))^{m_1}}{m_1!}\frac{g_{\mathcal D, 2}(z)^{m_2}}{m_2!} \cdots \frac{g_{\mathcal D, j}(z)^{m_j}}{m_j!},
\end{equation*}
with $u_{\mathcal D, \sigma, 0}=1$.

\begin{theorem} \label{d0c} Let $\lambda^+$ be a positive real number. Suppose $N \in \Z_{\gqs 1}$ and  $\lambda N \in \Z$ for $\lambda$ satisfying $|\lambda| \lqs \lambda^+$.
With $\overline{N}$ denoting $N \bmod 2$, we have
\begin{equation} \label{presd}
    \mathcal D_1(N,-\lambda N) = \Re\left[\frac{w_0^{-N/2}}{N^{2}} \left( d_{0}\bigl(\lambda, \overline{N}\bigr) +\frac{d_{1}\bigl(\lambda, \overline{N}\bigr)}{N}+ \dots +\frac{d_{m-1}\bigl(\lambda, \overline{N}\bigr)}{N^{m-1}}\right)\right] + O\left(\frac{|w_0|^{-N/2}}{N^{m+2}}\right)
\end{equation}
for an implied constant depending only on  $\lambda^+$ and $m$. The functions $d_{t}\bigl(\lambda, \overline{N}\bigr)$ are given as follows, where $\alpha$ depends on $p/2$ with saddle-point $z_0$,
\begin{align}
    d_{t}\bigl(\lambda, \overline{1}\bigr) & = -2 \sum_{s=0}^t  \G\left(s+\frac 12\right) \alpha_{2s}(f_{\mathcal D,\lambda} \cdot u_{\mathcal D, 0,t-s}), \label{dso}\\
    d_{t}\bigl(\lambda, \overline{0}\bigr) & = -2 w_0^{-1/2} \sum_{j=0}^t (-1)^{j-t}\binom{t+1}{j+1} \sum_{s=0}^j  \G\left(s+\frac 12\right) \alpha_{2s}(f^*_{\mathcal D,\lambda} \cdot u_{\mathcal D, \lambda,j-s}). \label{dso2}
\end{align}
\end{theorem}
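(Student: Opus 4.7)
\textbf{Proposal for the proof of Theorem \ref{d0c}.} The plan is to mimic the arguments that produced Theorems \ref{thma}, \ref{c2se} and \ref{c2ses}, starting from the integral representations \eqref{shw3o} and \eqref{shw3e}, and applying Perron's saddle-point method (Theorem \ref{sdle}) at the same saddle $z_0$ that was used for $\mathcal A_1$. Both phase functions here are $p(z)/2$, whose unique critical point in the strip $1/2<\Re(z)<3/2$ is still $z_0$, so the bounds \eqref{aqw} and \eqref{adzx} transport to $p(z)/2$ with $e^{-Np(z_0)/2}=w_0^{-N/2}$.

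For $N$ odd, I would first deform the path of integration in \eqref{shw3o} to the polygonal path $\mathcal P$ through $z_0$ (legitimate by \eqref{mrwd} and Cauchy's theorem), then establish an analogue of Proposition \ref{gas}: there are holomorphic functions $u_{\mathcal D,0,j}(z)$ on a neighborhood of $\mathbb B_1$ (given by \eqref{uiz} with $g_\ell$ replaced by $g_{\mathcal D,\ell}$ and $\sigma=0$) such that
\begin{equation*}
\exp\bigl(v_\mathcal D(z;N,0)\bigr)=\sum_{j=0}^{d-1}\frac{u_{\mathcal D,0,j}(z)}{N^j}+O\!\left(\frac{1}{N^d}\right)
\end{equation*}
uniformly on $\mathbb B_1$, for $1\lqs d\lqs 2\min(L,L^*)-1$. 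This reduces the asymptotics of $\mathcal D_1(N,-\lambda N)$ to finitely many integrals of the shape $\int_{\mathcal P} e^{-N\,p(z)/2}\, f_{\mathcal D,\lambda}(z)\,u_{\mathcal D,0,j}(z)\,dz$, to which I would apply Theorem \ref{sdle} with $\mu=2$ and $k=0$. Collecting the terms exactly as in the passage culminating in \eqref{jmp} and \eqref{btys}, and tracking the overall factor of $-2$ from the $-2/N^{1/2}$ in \eqref{dn2x} combined with the $N^{1/2}$ absorbed into $N^{-3/2}$, yields \eqref{dso}.

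For $N$ even, the same procedure applied to \eqref{shw3e} gives an expansion in powers of $1/(N+1)$ with phase $e^{-(N+1)p(z_0)/2}=w_0^{-1/2}w_0^{-N/2}$, which explains the $w_0^{-1/2}$ prefactor in \eqref{dso2}. The only genuine novelty is re-expressing an expansion of the form $\sum_j e_j/(N+1)^{j+2}$ as a series in $1/N^{t+2}$; using
\begin{equation*}
\frac{1}{(N+1)^{j+2}}=\sum_{t\gqs j}(-1)^{t-j}\binom{t+1}{j+1}\frac{1}{N^{t+2}}
\end{equation*}
and interchanging summation order produces exactly the combinatorial structure in \eqref{dso2}, parallel to the $(-2)^{j-t}\binom{t+1}{j+1}$ that appeared in Theorem \ref{c2ses} for shifts by $1/2$. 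The $\sigma$-dependence $\pi i\sigma z$ contributed by $v_\mathcal D(z;N+1,\lambda)$ at $\sigma=\lambda$ is incorporated into the $u_{\mathcal D,\lambda,j}$ polynomials, which is why the second argument of $u_{\mathcal D,\cdot,\cdot}$ in \eqref{dso2} is $\lambda$ rather than $0$.

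The main obstacle is the bookkeeping in the $N$ even case: one must simultaneously (i) verify that the analogue of Proposition \ref{gas} for $\exp(v_\mathcal D(z;N+1,\lambda))$ holds uniformly in $\lambda$ (this is where the $\lambda$-dependence enters $g_{\mathcal D,1}$ and hence $u_{\mathcal D,\lambda,j}$); (ii) rewrite the expansion in $(N+1)^{-1}$ as an expansion in $N^{-1}$ to arbitrary order with a controlled remainder; and (iii) confirm via Proposition \ref{albnd} that the $\alpha_{2s}$ coefficients remain bounded uniformly in $\lambda$ so that the tail terms can be absorbed into the error $O(|w_0|^{-N/2}/N^{m+2})$. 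Once these routine but delicate estimates are in place, the identities \eqref{dso} and \eqref{dso2} drop out by inspection.
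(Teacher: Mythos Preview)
Your proposal is correct and follows essentially the same route the paper indicates (``derived similarly to Section 7 of \cite{OS2}''): deform to $\mathcal P$, expand $\exp(v_\mathcal D)$ via the $u_{\mathcal D,\sigma,j}$, apply Theorem~\ref{sdle} to $p/2$ at $z_0$, and in the even case convert the $(N+1)^{-1}$ expansion to an $N^{-1}$ expansion via the binomial identity you wrote down. Your identification of the $-2$, the $w_0^{-1/2}$, and the $(-1)^{j-t}\binom{t+1}{j+1}$ factors is accurate, and the parallel you draw with the $(-2)^{j-t}$ in Theorem~\ref{c2ses} (shift by $1/2$ there versus shift by $1$ here) is exactly the right analogy.
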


\begin{table}[ht]
{\footnotesize
\begin{center}
\begin{tabular}{cc|cccc|c}
$N$ & $\lambda$ & $m=1$ & $m=2$ & $m=3$ &  $m=5$ & $\mathcal D_1(N, -\lambda N)$  \\ \hline
$1200$ & $1/3$ & $-1.54767 \times 10^{12}$ &  $-1.53845 \times 10^{12}$ & $-1.53755 \times 10^{12}$ &  $-1.53757 \times 10^{12}$ &  $-1.53757 \times 10^{12}$\\
$1200$ & $1$ & $\phantom{-}2.19568 \times 10^{12}$ &  $\phantom{-}2.20664 \times 10^{12}$ & $\phantom{-}2.20164 \times 10^{12}$ &  $\phantom{-}2.20181 \times 10^{12}$ &  $\phantom{-}2.20181 \times 10^{12}$ \\
$1200$ & $2$ & $-5.91241 \times 10^{12}$ &  $-5.62009 \times 10^{12}$ & $-5.60529 \times 10^{12}$ &  $-5.60869 \times 10^{12}$ &  $-5.60866 \times 10^{12}$\\ \hline
$1203$ & $1/3$ & $-2.17797 \times 10^{12}$ &  $-2.18005 \times 10^{12}$ & $-2.17903 \times 10^{12}$ &  $-2.17904 \times 10^{12}$ &  $-2.17904 \times 10^{12}$\\
$1203$ & $1$ & $\phantom{-}1.68218 \times 10^{12}$ &  $\phantom{-}1.74105 \times 10^{12}$ & $\phantom{-}1.73537 \times 10^{12}$ &  $\phantom{-}1.73545 \times 10^{12}$ &  $\phantom{-}1.73545 \times 10^{12}$ \\
$1203$ & $2$ & $-7.82854 \times 10^{12}$ &  $-7.68896 \times 10^{12}$ & $-7.65629 \times 10^{12}$ &  $-7.66011 \times 10^{12}$ &  $-7.66007 \times 10^{12}$
\end{tabular}
\caption{The approximations of Theorem \ref{d0c} to $\mathcal D_1(N,-\lambda N)$.} \label{d1n1}
\end{center}}
\end{table}

The asymptotic expansion of $\mathcal E_1(N,-\lambda N)$ is proved analogously to \cite[Sect. 8]{OS2}. The functions $f_{\mathcal E,\lambda},$ $\phi^*_{\lambda,k}$ and $u_{0,j}$ are defined in \eqref{rez}, \eqref{phis} and \eqref{uiz} respectively.

\begin{theorem} \label{eee} Let $\lambda^+$ be a positive real number. Suppose $N \in \Z_{\gqs 1}$ and  $\lambda N \in \Z$ for $\lambda$ satisfying $|\lambda| \lqs \lambda^+$.
We have
\begin{equation} \label{prese}
    \mathcal E_1(N,-\lambda N) = \Re\left[\frac{w(0,-2)^{-N}}{N^{2}} \left( e_{0}(\lambda)+\frac{e_{1}(\lambda)}{N}+ \dots +\frac{e_{m-1}(\lambda)}{N^{m-1}}\right)\right] + O\left(\frac{|w(0,-2)|^{-N}}{N^{m+2}}\right)
\end{equation}
for an implied constant depending only on  $\lambda^+$ and $m$. The functions $e_{t}(\lambda)$ are given as follows, where $\alpha$ depends on $p$ with saddle-point $z_1$,
\begin{equation} \label{eso}
    e_{t}(\lambda) = 2\sum_{s=0}^t \sum_{k=0}^{t-s}  \G\left(s+\frac 12\right)  \alpha_{2s}(f_{\mathcal E,\lambda} \cdot \phi^*_{\lambda,k} \cdot  u_{0,t-s-k}).
\end{equation}
\end{theorem}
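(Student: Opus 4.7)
The plan is to apply the saddle-point method to the integral representation \eqref{shw4} in exactly the manner used in the proof of Theorem \ref{thma}, with the only novel feature being the $1/N$-expansion built into the integrand through the factor $q_{\mathcal E}(z;N,-\lambda N) = q_{\mathcal C}(z)\sum_{\ell=0}^{2L-1}\phi^*_{\lambda,\ell}(z)/N^\ell$ inside $f_{\mathcal E,\lambda}$.

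First I would deform the contour from $[2.01,2.49]$ onto the polygonal path $\mathcal Q$ used in Proposition \ref{cy}, which passes through the unique saddle-point $z_1 \approx 2.20541 + 0.345648 i$ of $p$ in the strip $1.5<\Re(z)<2.5$. By \eqref{aqwc} this path is admissible for Theorem \ref{sdle} with $\mu = 2$, and \eqref{pzlogw} gives $e^{-Np(z_1)} = w(0,-2)^{-N}$, producing the overall exponential factor in \eqref{prese}. Next I would substitute two truncated expansions into the integrand: the explicit series $q_{\mathcal E}(z;N,-\lambda N) = q_{\mathcal C}(z)\sum_{k=0}^{d-1}\phi^*_{\lambda,k}(z)/N^k + O(N^{-d})$, with truncation error controlled uniformly on $\mathbb B_2$ via the bounds in \cite[Sect. 8]{OS2} and an argument parallel to Lemma \ref{phib}, and the expansion $\exp(v_\mathcal C(z;N,0)) = \sum_{j=0}^{d-1} u_{0,j}(z)/N^j + O(N^{-d})$ coming from Proposition \ref{gas} on a neighborhood of $\mathbb B_2$. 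After multiplying the two truncated series and collecting powers of $1/N$, I obtain a finite double sum of saddle-point integrals
\begin{equation*}
\frac{1}{N^{3/2+k+j}}\,\Re \int_{\mathcal Q} e^{-Np(z)}\,q_{\mathcal C}(z)\,e^{-2\pi i\lambda z}\,\phi^*_{\lambda,k}(z)\,u_{0,j}(z)\,dz
\end{equation*}
plus a uniformly controllable remainder.

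Applying Theorem \ref{sdle} with $M = d$ to each of these integrals, with the steepest-descent angle determined by $p_0 = p''(z_1)/2$ and the coefficients $\alpha_{2s}$ bounded uniformly by Proposition \ref{albnd}, yields an expansion in $1/N^{s+1/2}$ for each. Combining with the $1/N^{3/2+k+j}$ prefactor and relabeling $t = s+k+j$ produces the triple sum \eqref{eso}, with the factor of $2$ arising from the pairing $e^{-Np(z_1)}+\overline{e^{-Np(z_1)}}$ via $\Re$, and overall error $O(|w(0,-2)|^{-N}/N^{m+2})$.

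The main obstacle is bookkeeping rather than any single hard estimate: three independent expansions (the $\phi^*$ series, the $u_{0,j}$ series, and the saddle-point expansion itself) must be combined and retruncated at a common depth $d$, and one must verify that every implied constant depends only on $\lambda^+$ and $m$. The key check is that substituting $\sigma = -\lambda N$ inside $\phi_{\sigma,\ell}$ introduces only the explicit correction \eqref{phis} (affecting $\ell=1$ alone), so that the $N$-dependence of $f_{\mathcal E,\lambda}$ is genuinely captured by the $N$-free functions $\phi^*_{\lambda,\ell}$ and nothing worse. This passage from $\sigma$ to $-\lambda N$ was already carried out in the proof of Proposition \ref{ey} via Lemma \ref{phib}, and will transfer with no essential change.
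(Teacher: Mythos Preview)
Your proposal is correct and follows essentially the same route as the paper, which simply says the result is ``proved analogously to \cite[Sect.\ 8]{OS2}'' using the integral representation \eqref{shw4}, the contour $\mathcal Q$ through $z_1$, and the double expansion into $\phi^*_{\lambda,k}$ and $u_{0,j}$ before applying Theorem~\ref{sdle}. One small correction: the factor $2$ in \eqref{eso} is not from a conjugate pairing (the $\Re$ is already present in \eqref{shw4}) but from the $2\alpha_{2m}$ in the statement of Theorem~\ref{sdle} combined with the prefactor $1/N^{3/2}$; and Proposition~\ref{gas} as stated covers $\mathbb B_1$, so you should cite instead the analogous expansion on $\mathbb B_2$ used for $\mathcal C_2$.
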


\begin{table}[ht]
{\footnotesize
\begin{center}
\begin{tabular}{cc|cccc|c}
$N$ & $\lambda$ & $m=1$ & $m=2$ & $m=3$ &  $m=5$ & $\mathcal E_1(N, -\lambda N)$  \\ \hline
$1200$ & $1/3$ & $\phantom{-}1.14604\times 10^8$ &  $\phantom{-}1.18408 \times 10^8$ & $\phantom{-}1.17881 \times 10^8$ &   $\phantom{-}1.17905 \times 10^8$ &  $\phantom{-}1.17905 \times 10^8$\\
$1200$ & $1$ & $-2.26622\times 10^8$ &  $-2.83349 \times 10^8$ & $-2.67613 \times 10^8$ &  $-2.69606 \times 10^8$ &  $-2.69595 \times 10^8$ \\
$1200$ & $2$ & $-1.25746\times 10^{10}$ &  $-9.18677 \times 10^9$ & $-9.62868 \times 10^9$ &   $-9.62637 \times 10^9$ &  $-9.63726 \times 10^9$
\end{tabular}
\caption{The approximations of Theorem \ref{eee} to $\mathcal E_1(N,-\lambda N)$.} \label{e1n1}
\end{center}}
\end{table}

\section{When waves are good approximations to partitions}

\subsection{Proof of Theorem \ref{ma3}}

We first show upper and lower bounds for $p_N\bigl(\lambda N^2\bigr)$.

\begin{lemma}
For positive integers $N,$ $\lambda N^2$ and  absolute implied constants
\begin{equation}\label{pbun}
    \frac{1}{ \lambda N^2} e^{(2+\log \lambda)N} \ll p_N\bigl(\lambda N^2\bigr) \ll \frac{1}{ \lambda N^2} e^{\bigl(2\pi \sqrt{\lambda/6}\bigr)N}.
\end{equation}
\end{lemma}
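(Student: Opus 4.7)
The plan is to derive the two bounds in \eqref{pbun} separately.

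\textbf{Upper bound.} This is immediate from the trivial inequality $p_N(n)\lqs p(n)$ combined with \eqref{pnexp}. Substituting $n=\lambda N^2$ into \eqref{pnexp} yields
\begin{equation*}
p_N(\lambda N^2)\lqs p(\lambda N^2) \ll \frac{1}{\lambda N^2}\exp\Bigl(\tfrac{2\pi}{\sqrt 6}\sqrt{\lambda N^2 - 1/24}\Bigr) \ll \frac{1}{\lambda N^2} e^{\left(2\pi\sqrt{\lambda/6}\right) N}
\end{equation*}
with an absolute implied constant, as desired.

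\textbf{Lower bound.} The key is the elementary combinatorial inequality
\begin{equation*}
p_N(n) \gqs \frac{1}{N!}\binom{n-1}{N-1} \qquad (n\gqs N),
\end{equation*}
which follows from the fact that the $\binom{n-1}{N-1}$ compositions of $n$ into exactly $N$ positive parts each sort to a partition into exactly $N$ parts, with each such partition arising from at most $N!$ compositions. Setting $n=\lambda N^2$ and using $\binom{n-1}{N-1}\gqs (n-N+1)^{N-1}/(N-1)!$ reduces the task to estimating $(\lambda N^2)^{N-1}/(N!(N-1)!)$, after absorbing the factor $(1-1/(\lambda N))^{N-1}$ into an absolute constant in the regime where $\lambda N$ is bounded below.

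Stirling's formula gives $N!(N-1)! \sim 2\pi\, N^{2N}/e^{2N}$, so
\begin{equation*}
\frac{(\lambda N^2)^{N-1}}{N!\,(N-1)!}
 \sim \frac{\lambda^{N-1} e^{2N}}{2\pi N^2}
 = \frac{1}{2\pi \lambda N^2}\, e^{(2+\log\lambda)N},
\end{equation*}
which matches the claimed lower bound.

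\textbf{Main obstacle.} The argument above is only effective when $\lambda N$ is bounded below, since otherwise the factor $(1-1/(\lambda N))^{N-1}$ is not controlled and the combinatorial bound degenerates entirely for $\lambda N<1$. I would handle the complementary regime $\lambda N \lqs c$ (for a small absolute constant $c$) separately by observing that there the claimed lower bound $\frac{1}{\lambda N^2} e^{(2+\log\lambda)N}$ is itself $\ll 1$, while $p_N(\lambda N^2)\gqs 1$ trivially. Splicing the two cases together produces a lower bound valid with an absolute implied constant across all admissible $\lambda$.
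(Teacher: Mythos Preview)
Your upper bound is correct and identical to the paper's.

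The lower bound argument has a real gap. You claim that the factor $(1-1/(\lambda N))^{N-1}$ can be absorbed into an absolute constant once $\lambda N$ is bounded below by a fixed $c$. This is false: if $\lambda N$ is \emph{exactly} of order $c$ (say $\lambda=c/N$), then
\[
\Bigl(1-\tfrac{1}{\lambda N}\Bigr)^{N-1}=\Bigl(1-\tfrac1c\Bigr)^{N-1},
\]
which decays exponentially in $N$. So on the boundary between your two regimes the combinatorial bound misses the target by an exponential factor, and no choice of the absolute constant $c$ repairs this. (What you would need is $\lambda$ itself bounded below, since then $(1-1/(\lambda N))^{N-1}\gqs e^{-2/\lambda}$; splitting instead on $\lambda\lqs c_0$ with $c_0<e^{-2}$ would make your argument work after handling finitely many small-$N$ cases.)

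The paper sidesteps the whole difficulty by using the slightly stronger elementary bound
\[
p_N(n)\ \gqs\ \frac{1}{N!}\binom{N+n-1}{N-1}
\]
(weak compositions into $N$ nonnegative parts rather than your strict compositions). Stirling then produces the factor $(1+1/(\lambda N))^{N(1+\lambda N)}$ with a \emph{plus} sign, and the clean inequality $(1+1/x)^{1+x}\gqs e$ for $x>0$ gives $(1+1/(\lambda N))^{N(1+\lambda N)}\gqs e^N$ uniformly in $\lambda$. No case split is needed, and the lower bound follows directly.
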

\begin{proof} The upper bound follows from $p_N(n)\lqs p(n)$ and \eqref{pnexp}.
For the lower bound we start with
\begin{equation} \label{pbun2}
     \frac{1}{N!}\binom{N+n-1}{N-1} \lqs p_N(n)
\end{equation}
from, for example, \cite[p. 86]{Sze51}. Stirling's formula implies $\G(x) \sim \sqrt{2\pi}x^{x-1/2}e^{-x}$ as $x\to \infty$, so we see that the left side of \eqref{pbun2} is
\begin{equation} \label{pbun3}
     \frac{\G(N+n)}{\G(N)^2 \G(n) \cdot n\cdot N} \gg \frac{(\lambda e)^N}{\lambda N^2} \left(1+\frac{1}{\lambda N}\right)^{N(1+\lambda N)}
\end{equation}
for $n=\lambda N^2$. Now use that $(1+1/x)^{1+x} \gqs e$ for $x>0$ to show
$$
\left(1+\frac{1}{\lambda N}\right)^{N(1+\lambda N)} \gqs e^N
$$
and we have proved the lower bound.
\end{proof}

\begin{proof}[Proof of Theorem \ref{ma3}]
With $n=\lambda N^2$ in \eqref{slee} and \eqref{w100}, we have
\begin{equation} \label{strt}
   p_N\bigl(\lambda N^2\bigr) = \sum_{k=1}^{100} W_k\bigl(N,\lambda N^2\bigr) -  \sum_{h/k} Q_{hk(-\lambda N^2)}(N)
\end{equation}
where the second sum is over all $h/k \in \mathcal A(N) \cup \mathcal B(101,N) \cup \mathcal C(N) \cup \mathcal D(N) \cup \mathcal E(N)$. With
\eqref{a2ns} and \eqref{a1n}, the sum over $\mathcal A(N)$ is bounded by an absolute constant times the absolute value of
\begin{equation} \label{a2nsx}
     e^{0.05N}+\frac 2{N^{1/2}} \Im  \sum_{ k \ : \ \hat{z} \in  (1.01, 1.49)} \frac{(-1)^{k}}{k^2}  \exp\bigl(-N  (p(\hat{z})+2\pi i \lambda \hat{z}-\pi i/\hat{z}) \bigr) q\left(\hat{z} \right) \exp\bigl(v \left(\hat{z};N,0 \right) \bigr)
\end{equation}
where $\hat{z}=N/k$ as before. We know that $q\left(z \right) \exp\bigl(v \left(z;N,0 \right) \bigr) \ll 1$ by \eqref{soota} with $\lambda=0$. For $z=x\in \R$ and $\lambda \in \R$
\begin{equation}
    \Re\left[-p(z)-2\pi i \lambda z+\pi i/z\right]  = \Re\left[-p(z)\right] = \frac{\Im(\li(e^{2\pi i x}))}{2\pi x}
    = \frac{\cl(2\pi x)}{2\pi x} \label{bir}
\end{equation}
 and the right side of \eqref{bir} is bounded by $0.15$ for $1\lqs x \lqs 1.5$. Consequently, \eqref{a2nsx} is bounded by an absolute constant times
\begin{equation*}
     e^{0.05N}+ e^{0.15N}  \sum_{ k \ : \ \hat{z} \in  (1.01, 1.49)} \frac{1}{k^2}  \ll e^{0.15N}.
\end{equation*}
Similar reasoning, using \eqref{c3nx}, \eqref{c3nxs}, \eqref{dn2x}, \eqref{dn2xe} and \eqref{sroe}, shows the sums over $\mathcal C(N)$, $\mathcal D(N)$ and $\mathcal E(N)$ are also $O(e^{0.15N})$.

From the proof of Proposition \ref{by} we find that the sum over $\mathcal B(101,N)$ is $O(e^{(|\lambda|+0.055)N})$. Therefore
\begin{equation} \label{strt2}
   p_N\bigl(\lambda N^2\bigr) = \sum_{k=1}^{100} W_k\bigl(N,\lambda N^2\bigr) +  O\left(e^{0.15N} + e^{(|\lambda|+0.055)N}\right).
\end{equation}
Next, we divide both sides of \eqref{strt2} by $p_N\bigl(\lambda N^2\bigr)$ and use the lower bound of \eqref{pbun}.
Since
\begin{equation*}
    \max\{0.15, |\lambda|+0.055\}-2-\log \lambda < -0.109
\end{equation*}
for $0.2\lqs \lambda \lqs 2.9$ we obtain
\begin{equation*}
   p_N\bigl(\lambda N^2\bigr) \Big\backslash \left(\sum_{k=1}^{100} W_k\bigl(N,\lambda N^2\bigr)\right) = 1 +  O\left(e^{-0.1N}\right)
\end{equation*}
and the result follows easily.
\end{proof}

\subsection{Good approximations for $N$ or $n$ small} \label{good}



\SpecialCoor
\psset{griddots=5,subgriddiv=0,gridlabels=0pt}
\psset{xunit=0.02cm, yunit=0.7cm}
\psset{linewidth=1pt}
\psset{dotsize=2pt 0,dotstyle=*}

\begin{figure}[ht]
\begin{center}
\begin{pspicture}(180,-4)(700,4) 

\savedata{\mydata}[
{{275., -4.17495}, {276.,
  3.65556}, {277., -3.94445}, {278.,
  3.31157}, {279., -3.69476}, {280., 3.0399}, {281., -3.43781}, {282.,
   2.83162}, {283., -3.15489}, {284., 2.6651}, {285., -2.858}, {286.,
  2.54514}, {287., -2.57105}, {288.,
  2.45273}, {289., -2.29163}, {290.,
  2.36096}, {291., -2.03649}, {292.,
  2.26913}, {293., -1.82523}, {294.,
  2.16286}, {295., -1.65322}, {296., 2.02844}, {297., -1.5248}, {298.,
   1.87441}, {299., -1.44346}, {300.,
  1.70288}, {301., -1.3927}, {302., 1.51602}, {303., -1.36337}, {304.,
   1.33229}, {305., -1.3475}, {306.,
  1.16155}, {307., -1.32617}, {308.,
  1.01038}, {309., -1.29007}, {310.,
  0.892163}, {311., -1.23569}, {312.,
  0.809991}, {313., -1.15528}, {314.,
  0.760299}, {315., -1.05155}, {316.,
  0.742712}, {317., -0.932961}, {318.,
  0.748003}, {319., -0.804822}, {320.,
  0.763291}, {321., -0.678392}, {322.,
  0.780179}, {323., -0.56632}, {324.,
  0.787526}, {325., -0.474868}, {326.,
  0.775252}, {327., -0.410724}, {328.,
  0.740955}, {329., -0.377726}, {330.,
  0.683325}, {331., -0.372143}, {332.,
  0.604298}, {333., -0.388921}, {334.,
  0.512222}, {335., -0.420911}, {336.,
  0.415702}, {337., -0.456641}, {338.,
  0.323522}, {339., -0.486401}, {340.,
  0.246263}, {341., -0.502375}, {342.,
  0.190836}, {343., -0.497348}, {344.,
  0.16065}, {345., -0.468928}, {346.,
  0.157177}, {347., -0.418945}, {348.,
  0.177013}, {349., -0.351197}, {350.,
  0.213153}, {351., -0.273325}, {352.,
  0.257758}, {353., -0.194889}, {354.,
  0.301119}, {355., -0.124516}, {356.,
  0.33377}, {357., -0.0704507}, {358.,
  0.34907}, {359., -0.0388126}, {360.,
  0.342571}, {361., -0.0315927}, {362.,
  0.313065}, {363., -0.0478248}, {364.,
  0.26371}, {365., -0.0833265}, {366.,
  0.200231}, {367., -0.130531}, {368.,
  0.130421}, {369., -0.18071}, {370.,
  0.0636406}, {371., -0.224982}, {372.,
  0.00847605}, {373., -0.255001}, {374., -0.028309}, {375.,
-0.265037}, {376., -0.0422126}, {377., -0.25258}, {378., -0.0324197},
{379., -0.218223}, {380., -0.00169828}, {381., -0.166198}, {382.,
  0.0444256}, {383., -0.103551}, {384.,
  0.0979497}, {385., -0.0387036}, {386., 0.149738}, {387.,
  0.0193225}, {388., 0.19123}, {389., 0.0624099}, {390.,
  0.215282}, {391., 0.084855}, {392., 0.217375}, {393.,
  0.0838612}, {394., 0.196539}, {395., 0.0600756}, {396.,
  0.155189}, {397., 0.0177324}, {398.,
  0.0988503}, {399., -0.0362617}, {400.,
  0.0355468}, {401., -0.0932691}, {402., -0.0256678}, {403.,
-0.144043}, {404., -0.0759929}, {405., -0.180342}, {406., -0.108069},
{407., -0.19612}, {408., -0.1173}, {409., -0.18832}, {410.,
-0.102518}, {411., -0.157539}, {412., -0.0660755}, {413., -0.107918},
{414., -0.0136807}, {415., -0.0464492}, {416., 0.0464747}, {417.,
  0.0179198}, {418., 0.105017}, {419., 0.0756585}, {420.,
  0.152679}, {421., 0.118173}, {422., 0.181733}, {423.,
  0.139016}, {424., 0.18726}, {425., 0.134925}, {426.,
  0.167834}, {427., 0.106425}, {428., 0.125814}, {429.,
  0.0577215}, {430., 0.0671018}, {431., -0.00385498}, {432.,
  0.000219952}, {433., -0.0688669}, {434., -0.0649186}, {435.,
-0.127232}, {436., -0.118452}, {437., -0.169729}, {438., -0.152131},
{439., -0.189374}, {440., -0.160586}, {441., -0.182578}, {442.,
-0.142166}, {443., -0.149741}, {444., -0.099308}, {445., -0.0952401},
{446., -0.0382347}, {447., -0.0268805}, {448., 0.0319437}, {449.,
  0.0452343}, {450., 0.100521}, {451., 0.110219}, {452.,
  0.156806}, {453., 0.15802}, {454., 0.191772}, {455.,
  0.180978}, {456., 0.199447}, {457., 0.175074}, {458.,
  0.177906}, {459., 0.140602}, {460., 0.129677}, {461.,
  0.0822398}, {462., 0.061442}, {463.,
  0.00845556}, {464., -0.0168737}, {465., -0.0697096}, {466.,
-0.0935255}, {467., -0.140259}, {468., -0.156716}, {469., -0.19204},
{470., -0.196396}, {471., -0.216492}, {472., -0.205841}, {473.,
-0.20902}, {474., -0.182793}, {475., -0.169808}, {476., -0.129922},
{477., -0.103938}, {478., -0.0545619}, {479., -0.0207499}, {480.,
  0.0322744}, {481., 0.0674731}, {482., 0.117472}, {483.,
  0.147288}, {484., 0.18777}, {485., 0.206127}, {486.,
  0.23179}, {487., 0.234258}, {488., 0.241839}, {489.,
  0.226368}, {490., 0.215222}, {491., 0.182524}, {492.,
  0.154813}, {493., 0.108343}, {494., 0.0687957}, {495.,
  0.0143147}, {496., -0.0304054}, {497., -0.0856525}, {498.,
-0.127903}, {499., -0.176246}, {500., -0.208549}, {501., -0.243063},
{502., -0.259259}, {503., -0.274866}, {504., -0.271104}, {505.,
-0.265419}, {506., -0.24084}, {507., -0.214644}, {508., -0.171613},
{509., -0.128854}, {510., -0.0727199}, {511., -0.0200219}, {512.,
  0.0415971}, {513., 0.0958504}, {514., 0.15416}, {515.,
  0.201048}, {516., 0.247392}, {517., 0.278816}, {518.,
  0.306018}, {519., 0.31598}, {520., 0.319516}, {521.,
  0.305132}, {522., 0.28394}, {523., 0.246009}, {524.,
  0.202792}, {525., 0.145825}, {526., 0.0867964}, {527.,
  0.0184681}, {528., -0.0474663}, {529., -0.117369}, {530.,
-0.179874}, {531., -0.240882}, {532., -0.289729}, {533., -0.332283},
{534., -0.358939}, {535., -0.375913}, {536., -0.374939}, {537.,
-0.362851}, {538., -0.332878}, {539., -0.292595}, {540., -0.236719},
{541., -0.173516}, {542., -0.0990178}, {543., -0.0219648}, {544.,
  0.0606288}, {545., 0.139903}, {546., 0.218306}, {547.,
  0.287301}, {548., 0.349294}, {549., 0.396535}, {550.,
  0.431867}, {551., 0.448737}, {552., 0.450813}, {553.,
  0.433029}, {554., 0.400112}, {555., 0.348595}, {556.,
  0.28431}, {557., 0.205297}, {558., 0.118328}, {559.,
  0.0226619}, {560., -0.0743484}, {561., -0.172682}, {562.,
-0.264902}, {563., -0.350793}, {564., -0.423405}, {565., -0.482918},
{566., -0.523422}, {567., -0.546026}, {568., -0.546306}, {569.,
-0.52669}, {570., -0.484506}, {571., -0.423674}, {572., -0.343314},
{573., -0.248761}, {574., -0.14071}, {575., -0.0255907}, {576.,
  0.0947767}, {577., 0.213399}, {578., 0.327959}, {579.,
  0.431569}, {580., 0.522131}, {581., 0.593572}, {582.,
  0.644715}, {583., 0.670943}, {584., 0.672574}, {585.,
  0.646916}, {586., 0.596142}, {587., 0.519691}, {588.,
  0.421663}, {589., 0.303534}, {590., 0.171091}, {591.,
  0.0274448}, {592., -0.120464}, {593., -0.268555}, {594., -0.40949},
{595., -0.539061}, {596., -0.650406}, {597., -0.740084}, {598.,
-0.802574}, {599., -0.836016}, {600., -0.836948}, {601., -0.805691},
{602., -0.74129}, {603., -0.646533}, {604., -0.523063}, {605.,
-0.376045}, {606., -0.209422}, {607., -0.0302509}, {608.,
  0.155889}, {609., 0.340862}, {610., 0.518405}, {611.,
  0.680345}, {612., 0.820842}, {613., 0.932805}, {614.,
  1.01191}, {615., 1.05319}, {616., 1.05474}, {617., 1.01447}, {618.,
  0.933518}, {619., 0.813018}, {620., 0.657282}, {621.,
  0.470577}, {622., 0.260027}, {623.,
  0.0324338}, {624., -0.203088}, {625., -0.438233}, {626.,
-0.663088}, {627., -0.869166}, {628., -1.04715}, {629., -1.18981},
{630., -1.28983}, {631., -1.34253}, {632., -1.34378}, {633.,
-1.29247}, {634., -1.1884}, {635., -1.03451}, {636., -0.834757},
{637., -0.596017}, {638., -0.325927}, {639., -0.0345758}, {640.,
  0.267746}, {641., 0.569054}, {642., 0.857925}, {643.,
  1.12216}, {644., 1.35101}, {645., 1.53391}, {646., 1.66257}, {647.,
  1.72975}, {648., 1.73119}, {649., 1.66428}, {650., 1.52977}, {651.,
  1.33034}, {652., 1.07204}, {653., 0.762723}, {654.,
  0.413182}, {655.,
  0.0354996}, {656., -0.356095}, {657., -0.746971}, {658., -1.12143},
{659., -1.46446}, {660., -1.76123}, {661., -1.99876}, {662.,
-2.16542}, {663., -2.25249}, {664., -2.25356}, {665., -2.1659},
{666., -1.98966}, {667., -1.72897}, {668., -1.39095}, {669.,
-0.986456}, {670., -0.528892}, {671., -0.0346511}, {672.,
  0.478289}, {673., 0.990168}, {674., 1.48099}, {675.,
  1.93047}, {676., 2.31965}, {677., 2.63087}, {678., 2.8493}, {679.,
  2.96287}, {680., 2.96355}, {681., 2.84714}, {682., 2.6142}, {683.,
  2.2696}, {684., 1.82309}, {685., 1.28846}, {686., 0.683763}, {687.,
  0.0301875}, {688., -0.648119}, {689., -1.32544}, {690., -1.97488},
{691., -2.56995}, {692., -3.08506}, {693., -3.49709}, {694.,
-3.7859}, {695., -3.93569}, {696., -3.93532}, {697., -3.77939},
{698., -3.46822}, {699., -3.00844}, {700., -2.41261}}
 ]

\psline[linecolor=gray]{->}(200,0)(750,0)
\psline[linecolor=gray]{->}(250,-4)(250,4)
\multirput(300,-0.15)(100,0){5}{\psline[linecolor=gray](0,0)(0,0.3)}
\multirput(245,-3)(0,1){7}{\psline[linecolor=gray](0,0)(10,0)}
\rput(300,-0.6){$300$}
\rput(400,-0.6){$400$}
\rput(500,-0.6){$500$}
\rput(600,-0.6){$600$}
\rput(700,-0.6){$700$}

\rput(190,3){$\phantom{-}3 \times 10^{-10}$}
\rput(190,2){$\phantom{-}2 \times 10^{-10}$}
\rput(190,1){$\phantom{-}1 \times 10^{-10}$}
\rput(190,-1){$-1 \times 10^{-10}$}
\rput(190,-2){$-2 \times 10^{-10}$}
\rput(190,-3){$-3 \times 10^{-10}$}

\rput(750,-0.7){$n$}

\dataplot[linecolor=darkbrn,linewidth=0.8pt,plotstyle=dots]{\mydata}


\end{pspicture}
\caption{The function $1-W_1(n,n)/p(n)$ for $250 \lqs n \lqs 700$}\label{afig}
\end{center}
\end{figure}
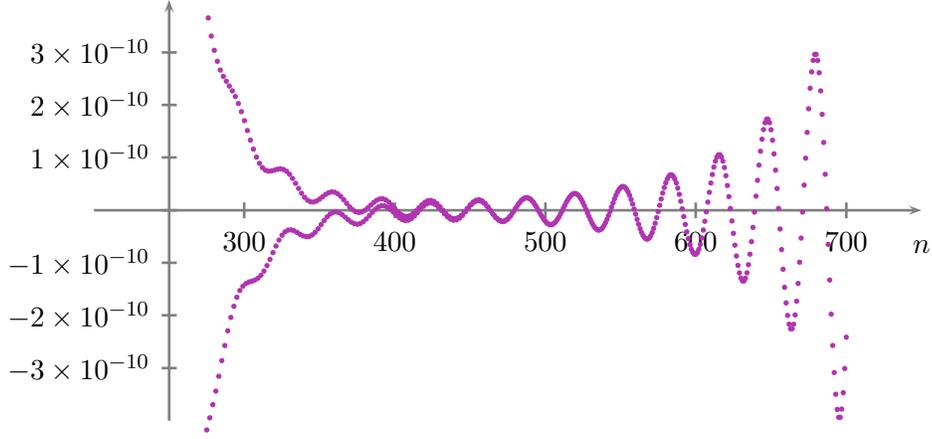


In Figure \ref{afig} we see how closely $W_1(n,n)$ matches $p(n)$ for relatively small $n$. We know that the first waves oscillate with periodicity of length $2\pi/V \approx 31.96311$ for large $n$ and this periodicity is already  visible in the figure.
Our computations show, for example, that
\begin{equation*}
    \left| 1-\frac{W_1(n,n)}{p(n)}\right| < 2\times 10^{-6} \quad \text{ for } \quad 100\lqs n \lqs 700.
\end{equation*}
The smallest value of $|1-W_1(n,n)/p(n)|$ for $1\lqs n \lqs 700$ occurs when $n=432$, giving the very accurate approximation
\begin{align*}
    W_1(432,432) & \approx \underline{46\,647\,863\,284\,22} 8\,241\,960.9,\\
    p(432)& = \underline{46\,647\,863\,284\,22} 9\,267\,991
\end{align*}

For $\lambda=2$ the initial agreement between $p_N(2N)$ and $W_1(N,2N)$ is even more impressive. For example,
\begin{equation*}
    \left| 1-\frac{W_1(N,2N)}{p_N(2N)}\right| \approx 5.86\times 10^{-21} \quad \text{for} \quad N=1000.
\end{equation*}
For $N=2000$ it is  $\approx 2.54\times 10^{-12}$.
We  computed the restricted partition $p_N(2N)$ above by relating it to $p(n)$ with the formula
\begin{equation*}
    p_N(2N) = p(2N)-\sum_{m=0}^{N-1}p(m).
\end{equation*}
This follows since we must remove from the partitions in $p(2N)$ those with largest part of size $2N-m$.  The  partition function $p(n)$ may be easily calculated using \eqref{hrr}.

Examples of Theorem \ref{ma2} with $\lambda=2$ are shown in Table \ref{tbl22}. This shows that after about $N=2900$ the first wave begins to grow more rapidly than $p_N(2N)$ and follows the expected asymptotics. See also the bottom row in Table \ref{tgz} for more on the $N=3300$ case.
\begin{table}[ht]
\begin{center}
\begin{tabular}{c | c | c | c}
$N$ & $p_N(2N)$ & $W_1(N,2N)$ &  $\Re\left[ (2  z_0 e^{-5\pi i z_0})\frac{w_0^{-N}}{N^2} \right]$ \\ \hline
$2700$ & $\mathbf{1.94\times 10^{77}}$ & $ \mathbf{\phantom{-}1.92\times 10^{77}}$  & $-1.16\times 10^{75}$  \\
$2800$ & $\mathbf{5.93\times 10^{78}}$ & $ \mathbf{\phantom{-}5.49\times 10^{78}}$  & $-4.33\times 10^{77}$  \\
$2900$ & $1.71\times 10^{80}$ & $ \phantom{-}4.63\times 10^{80}$  & $\phantom{-}3.20\times 10^{80}$  \\
$3000$ & $4.67\times 10^{81}$ & $ \mathbf{\phantom{-}6.60\times 10^{83}}$  & $\mathbf{\phantom{-}6.83\times 10^{83}}$  \\
$3100$ & $1.21\times 10^{83}$ & $ \mathbf{\phantom{-}5.59\times 10^{86}}$ &   $\mathbf{\phantom{-}5.70\times 10^{86}}$  \\
$3200$ & $2.96\times 10^{84}$ & $ \mathbf{\phantom{-}1.83\times 10^{89}}$  & $\mathbf{\phantom{-}1.78\times 10^{89}}$  \\
$3300$ & $6.92\times 10^{85}$ & $ \mathbf{-1.89\times 10^{92}}$  & $\mathbf{-2.03\times 10^{92}}$
\end{tabular}
\caption{Comparing  $p_N(2N)$, $W_1(N,2N)$ and the asymptotics from Theorem \ref{ma2}.} \label{tbl22}
\end{center}
\end{table}

We can offer the following explanation for why there is such good initial agreement between $ p_N(\lambda N)$ and the first waves. From \eqref{ptg} in the proof of Theorem \ref{ma2} we know that
\begin{equation} \label{usio}
    \sum_{k=1}^{100} W_k(N,\lambda N)=p_N(\lambda N) +\frac{e^{UN}}{N^2}\psi_\lambda \cdot \sin\Bigl(\tau_\lambda +VN\Bigr)+ O\left(\frac{e^{UN}}{N^{3}}\right),
\end{equation}
recalling \eqref{abuv}, \eqref{presxx} and \eqref{frbr}. For small $N$, $ p_N(\lambda N)$ is the largest term on the right of \eqref{usio}. This means the first waves will be close to $ p_N(\lambda N)$. However $ p_N(\lambda N) \ll  \exp(\bigl(2\pi \sqrt{|\lambda|/6}\bigr)\sqrt{N})$ as in \eqref{ptg2} which implies that, for any given $\lambda$, the second term on the right of \eqref{usio} will always eventually dominate and hence provide the asymptotics for the first waves as $N \to \infty$.

\section{Future work} \label{fur}
As we noted in Section \ref{maru}, the main results of Theorems \ref{ma1}, \ref{ma2}, \ref{ma3} and Corollary \ref{cma2} should be true with the sum over the first $100$ waves replaced by just the first wave. Hence Theorem \ref{ma2} becomes:

\begin{conj} \label{conja}
Let $\lambda^+$ be a positive real number. Suppose $N \in \Z_{\gqs 1}$ and  $\lambda N \in \Z$ for $\lambda$ satisfying $|\lambda| \lqs \lambda^+$.
Then
\begin{equation*}
    W_1(N,\lambda N) = \Re\left[\frac{w_0^{-N}}{N^{2}} \left( a_{0}(\lambda)+\frac{a_{1}(\lambda)}{N}+ \dots +\frac{a_{m-1}(\lambda)}{N^{m-1}}\right)\right] + O\left(\frac{|w_0|^{-N}}{N^{m+2}}\right)
\end{equation*}
where the coefficients $a_{t}(\lambda)$ are given in \eqref{btys} and the implied constant depends only on  $\lambda^+$ and $m$.
\end{conj}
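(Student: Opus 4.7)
By Theorem \ref{ma2}, Conjecture \ref{conja} will follow if we can show
\[
\sum_{k=2}^{100} W_k(N,\lambda N) = O\left(e^{W N}\right)
\]
for some constant $W < U = -\log|w_0|$, with the implied constant depending only on $\lambda^+$. Indeed, such a bound is $o(|w_0|^{-N}/N^M)$ for every fixed $M$, so it is swallowed by the error term in Theorem \ref{ma2}. Since only finitely many $k$ are involved, the plan is to prove an individual bound $W_k(N,\lambda N) = O(e^{W_k N})$ with $W_k < U$ for each $k \in \{2, 3, \ldots, 100\}$.

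The approach for each fixed $k \gqs 2$ is to adapt the Euler--Maclaurin and saddle-point machinery that drives Theorems \ref{thma} and \ref{thmb}. By Proposition \ref{rain}, $W_k(N,\lambda N) = -\sum_{(h,k)=1} Q_{hk(-\lambda N)}(N)$, a sum of at most $\varphi(k)$ residues. For each residue at the pole $z = h/k$ (of order $\lfloor N/k \rfloor$), translate $z \mapsto z + h/k$ and split the product $\prod_{j=1}^N (1 - e^{2\pi i jz})$ according to $j \bmod k$: the factors with $k \mid j$ form a sine product of length $\lfloor N/k \rfloor$ which carries the entire pole, while the factors with $k \nmid j$ contribute a non-vanishing holomorphic prefactor bounded uniformly in $N$ and in $\lambda$ for $|\lambda| \lqs \lambda^+$. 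Applying \cite[Thm. 4.1]{OS1} to the sine product, exactly as in Sections \ref{klmx} and \ref{klmx2}, yields an integral representation for $Q_{hk(-\lambda N)}(N)$ of the saddle-point type considered in Theorem \ref{sdle}.

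The dominant saddle point of the resulting integrand is a point related via Theorem \ref{disol} to some zero $w_{h,k}$ of the analytically continued dilogarithm on a branch determined by the geometry of the pole $h/k$. Theorem \ref{sdle} then produces an asymptotic of the form $Q_{hk(-\lambda N)}(N) \ll |w_{h,k}|^{-N \cdot r_{h,k}}$ for an effective rate exponent $r_{h,k}$ depending on $h$ and $k$ (compare the rates $|w_0|^{-N}, |w(0,-2)|^{-N}, |w_0|^{-N/2}$ and $|w(1,-3)|^{-N}$ that appear in Propositions \ref{cy}, \ref{dy}, \ref{ey}). The main obstacle, and the heart of the conjecture, is to verify the strict inequality
\[
-r_{h,k} \log|w_{h,k}| < U
\]
for every $k \in \{2, \ldots, 100\}$ and every coprime $h$, uniformly in $\lambda$. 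Since only finitely many triples $(h, k, w_{h,k})$ are involved, and since every relevant dilogarithm zero may be computed to arbitrary precision by Newton's method (Theorem \ref{dilab} and \cite{OS3}), this reduces in principle to a finite and computationally tractable check. Organizing this check cleanly across all $(h,k)$, and verifying the holomorphy and boundedness hypotheses of Theorem \ref{sdle} for all of the resulting prefactors, constitutes the bulk of the work required beyond the techniques already developed in this paper and in \cite{OS1, OS2}.
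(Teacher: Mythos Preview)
This statement is a \emph{conjecture} in the paper (Conjecture~\ref{conja} in Section~\ref{fur}), not a proved theorem. The paper offers no proof and explicitly flags it as open, remarking that its proof ``would require an improvement in Proposition~\ref{by}; the techniques of \cite{Sze51} or \cite{DrGe} might allow a careful enough estimate of $W_2+W_3+\cdots+W_{100}$.'' So there is no paper proof to compare against; your proposal is a sketch of an attack on an open problem.

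Your reduction---show $\sum_{k=2}^{100} W_k(N,\lambda N)=O(e^{WN})$ for some $W<U$---is correct and is precisely the route the paper points to. The idea of handling each $W_k$ via the residue at $h/k$, splitting the product $\prod_j(1-e^{2\pi i j z})$ by residue classes mod $k$, and running Euler--Maclaurin plus saddle-point is the natural generalization of Sections~\ref{klmx}--\ref{klmx2}. Conjecture~\ref{conjd} and Table~\ref{w2tb} confirm that for $k=2$ the expected growth rate is $|w_0|^{-N/2}$, well below $|w_0|^{-N}$, so the picture you describe is consistent with the evidence.

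There is, however, a concrete error in your sketch. You assert that the factors with $k\nmid j$ contribute ``a non-vanishing holomorphic prefactor bounded uniformly in $N$.'' This is false: that prefactor is a product of roughly $N(k-1)/k$ terms and is generically exponential in $N$. For example, at $k=2$, $h=1$, the prefactor at the pole is $\prod_{j\text{ odd},\,j\lqs N}\bigl(1-(-1)\bigr)=2^{\lceil N/2\rceil}$. Its exponential contribution must be absorbed into the main exponent and analyzed jointly with the order-$\lfloor N/k\rfloor$ sine product, not dismissed as a bounded multiplier. This is precisely what makes the problem hard: the poles at $h/k$ for small fixed $k$ have order growing linearly in $N$, unlike the simple and double poles handled in Section~\ref{klmx2}, so the residue extraction and the resulting integral representation are qualitatively more intricate than anything carried out here or in \cite{OS1,OS2}. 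Your closing claim that the matter ``reduces in principle to a finite and computationally tractable check'' of saddle-point locations therefore understates the analytic work still required; what you have written is a plausible program, and you correctly identify it as such, but it is not yet a proof.
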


We have already seen evidence for this conjecture in Table \ref{tgz}.
Its proof  would require an improvement in Proposition \ref{by}; the  techniques of \cite{Sze51} or \cite{DrGe} might allow a careful enough estimate of $W_2+W_3+ \cdots +W_{100}$. Strengthening Proposition \ref{by} would also  increase  the allowable range of $\lambda$ for Theorem \ref{ma3}.

Numerical experiments reveal that $W_2(N,\lambda N)$ matches $(-1)^{N+1}\mathcal D_1(N,-\lambda N)$ for large $N$ and we expect their asymptotic expansions are the same:
\begin{conj} \label{conjd}
Let $\lambda^+$ be a positive real number. Suppose $N \in \Z_{\gqs 1}$ and  $\lambda N \in \Z$ for $\lambda$ satisfying $|\lambda| \lqs \lambda^+$.
Then with $\overline{N}$ denoting $N \bmod 2$, we have
\begin{equation*}
    W_2(N,\lambda N) = (-1)^{N+1}\Re\left[\frac{w_0^{-N/2}}{N^{2}} \left( d_{0}\bigl(\lambda, \overline{N}\bigr) +\frac{d_{1}\bigl(\lambda, \overline{N}\bigr)}{N}+ \dots +\frac{d_{m-1}\bigl(\lambda, \overline{N}\bigr)}{N^{m-1}}\right)\right] + O\left(\frac{|w_0|^{-N/2}}{N^{m+2}}\right)
\end{equation*}
for an implied constant depending only on  $\lambda^+$ and $m$. The functions $d_{t}\bigl(\lambda, \overline{N}\bigr)$ are given in \eqref{dso} and \eqref{dso2}.
\end{conj}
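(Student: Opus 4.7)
The plan is to derive the asymptotic expansion of $W_2(N,\lambda N)$ directly, mirroring the proof of Theorem \ref{d0c} for $\mathcal{D}_1(N,-\lambda N)$, and then match the resulting coefficients. By Proposition \ref{rain}, since $h=1$ is the only integer with $0\lqs h<2$ and $(h,2)=1$,
\begin{equation*}
W_2(N,n)= -Q_{12(-n)}(N) = -2\pi i \res_{z=1/2} Q(z;N,-n).
\end{equation*}
The pole at $z=1/2$ has order $\lfloor N/2 \rfloor$, arising from the even-indexed factors in the denominator of $Q(z;N,\sigma)$.

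First I would express the residue as a small-circle contour integral and make the change of variables $z=1/2+u/2$. With $M=\lfloor N/2\rfloor$, a direct computation gives
\begin{equation*}
Q(1/2+u/2;N,-\lambda N) = \frac{(-1)^{\lambda N}e^{-\pi i \lambda N u}}{\prod_{j \text{ odd}, 1 \lqs j \lqs N}(1+e^{\pi i j u})\cdot \prod_{j'=1}^{M}(1-e^{2\pi i j' u})}.
\end{equation*}
The even-indexed product, after extracting phases, is the reciprocal of a sine product of length $M\approx N/2$, so the Euler-Maclaurin estimates \eqref{ott1}-\eqref{ott2} from \cite{OS1} can be brought to bear, producing a controlled approximation whose phase is governed by $\exp(-(N/2)\cdot p(\hat z))$ in direct parallel with the analysis of $\mathcal{D}_1$ in Section 7 of \cite{OS2}. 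The odd-indexed product is holomorphic near $u=0$ and contributes the analog of $q_{\mathcal D}$ (when $N$ is odd) or $q_{\mathcal D}^*$ (when $N$ is even).

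Second, I would deform the contour of integration to pass through the saddle-point $z_0$ of $p(z)/2$ (which coincides with the saddle of $p(z)$, since $p'(z)/2=0$ has the same solutions), verify that $\Re(p(z)/2-p(z_0)/2)>0$ on the new path away from $z_0$ (analogous to \eqref{aqw}), and apply Theorem \ref{sdle} to extract the asymptotic expansion term by term. Careful bookkeeping of the $(-1)^{\lambda N}$ prefactor, of the parity of $N$, and of the coefficients produced by Theorem \ref{sdle} should reproduce the expressions $d_{t}(\lambda,\overline{N})$ of \eqref{dso}-\eqref{dso2} together with the overall prefactor $(-1)^{N+1}$.

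The principal obstacle is the first step. The pole at $z=1/2$ has order $M\approx N/2$, so the small-circle contour must be chosen with a radius large enough to smear out this high-order pole yet small enough to exclude all other poles of $Q(z;N,-\lambda N)$ — the nearest of which, corresponding to fractions in $\mathcal{D}(N)$, lie at distance only $\sim 1/N$ from $z=1/2$. Uniformly approximating $\prod_{j'=1}^M(1-e^{2\pi i j'u})^{-1}$ on such a contour via \eqref{ott1}-\eqref{ott2} requires extending those Euler-Maclaurin bounds to the complex-$u$ regime and controlling them through the narrow corridor between $z=1/2$ and the clustering poles of $\mathcal{D}(N)$. This delicate analytic control is the technical heart of what distinguishes the case $k=2$ (small denominator, high-order residue) from the cases treated in \cite{OS1, OS2} (denominators $k$ comparable to $N$, simple poles); once it is established, the ensuing saddle-point analysis should proceed along well-trodden lines.
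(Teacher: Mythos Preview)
The statement you are attempting to prove is a \emph{conjecture} in the paper, not a theorem. The paper provides no proof; it is presented in Section~\ref{fur} (Future work) with the remark ``Numerical experiments reveal that $W_2(N,\lambda N)$ matches $(-1)^{N+1}\mathcal D_1(N,-\lambda N)$ for large $N$ and we expect their asymptotic expansions are the same,'' followed by Table~\ref{w2tb} of numerical evidence. The paper also notes that this is the analog of an earlier open conjecture for Rademacher coefficients.

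Your diagnosis of the principal obstacle is accurate and is precisely why the paper leaves this open. All of the results actually proved in the paper concern sums of residues $Q_{hk\sigma}(N)$ with $k$ comparable to $N$, so that each pole is simple (or of low, bounded order), and the residue can be written as an explicit sine product amenable to Euler--Maclaurin. The second wave $W_2(N,\lambda N)=-Q_{12(-\lambda N)}(N)$ is a residue at a pole of order $\lfloor N/2\rfloor$, and the machinery of \eqref{ott1}--\eqref{ott2} and Theorem~\ref{sdle} as developed in \cite{OS1,OS2} does not directly apply. Your proposed contour around $z=1/2$ would indeed have to thread between $1/2$ and the nearby Farey fractions in $\mathcal D(N)$ at distance $\sim 1/N$, and no uniform estimate of the required type is available in the paper or its references. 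So your sketch correctly isolates the missing ingredient but does not supply it; what you have written is a plausible outline of how a proof might go, together with an honest acknowledgment of the gap, rather than a proof.
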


\begin{table}[ht]
{\footnotesize
\begin{center}
\begin{tabular}{cc|cccc|c}
$N$ & $\lambda$ & $m=1$ & $m=2$ & $m=3$ &  $m=5$ & $W_2(N,\lambda N)$  \\ \hline
$3000$ & $1/3$ & $\phantom{-}6.13580 \times 10^{37}$ &  $\phantom{-}6.18769 \times 10^{37}$ & $\phantom{-}6.18681 \times 10^{37}$ &  $\phantom{-}6.18680 \times 10^{37}$ &  $\phantom{-}6.18680 \times 10^{37}$\\
$3000$ & $1$ & $\phantom{-}2.20860 \times 10^{36}$ &  $-2.19459 \times 10^{35}$ & $-1.79624 \times 10^{35}$ &  $-1.79070 \times 10^{35}$ &  $-1.79070 \times 10^{35}$ \\
$3000$ & $5/3$ & $-1.84871 \times 10^{38}$ &  $-1.77143 \times 10^{38}$ & $-1.77234 \times 10^{38}$ &  $-1.77239 \times 10^{38}$ &  $-1.77190 \times 10^{38}$
\end{tabular}
\caption{The  approximations of Conjecture \ref{conjd} to $W_2(N,\lambda N)$.} \label{w2tb}
\end{center}}
\end{table}
Table \ref{w2tb} gives examples and the agreement for $N$ odd is similar. Conjecture \ref{conjd} is the analog of \cite[Conj. 6.4]{OS1} for the Rademacher coefficients.
Conjectures \ref{conja} and \ref{conjd} together imply that $W_2(N,\lambda N)$ squared is approximately  $W_1(N,\lambda N)/N^2$ for large $N$.

Comparing  Tables \ref{c2n1} and \ref{e1n1}, we see that the entries for $\mathcal E_1(N,-\lambda N)$ in Table \ref{e1n1} are exactly $3$ times the entries for $\mathcal C_2(N,-\lambda N)$ in Table \ref{c2n1}. The following is based on further numerical evidence.

\begin{conj} 
The quantities $\mathcal E_1(N,-\lambda N)$ and $3\mathcal C_2(N,-\lambda N)$, given by
\begin{equation*}
      2 \Re \sum_{ \frac{N}{3}  <k \lqs \frac{N}{2}}  Q_{1k(-\lambda N)}(N)
    \qquad \text{and} \qquad
      6\Re \sum_{ \frac{2N}{3}  <k \lqs N, \ k\text{ odd}}  Q_{2k(-\lambda N)}(N)
\end{equation*}
respectively, have the same asymptotic expansion as $N \to \infty$. In other words, recalling \eqref{cso}, \eqref{eso}, we have $e_t(\lambda)=3c_t(\lambda)$ for all $t \in \Z_{\gqs 0}$.
\end{conj}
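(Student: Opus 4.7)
My plan is to work with the integral representations from Proposition \ref{pxc2} and \eqref{shw4}, both of which express the relevant quantities as integrals over the same steepest-descent contour $\mathcal Q$ through the same saddle-point $z_1 = 2 + \log(1-w(0,-2))/(2\pi i)$. Since $f_{\mathcal E,\lambda}(z;N) = f_{\mathcal C,\lambda}(z)\cdot \Phi^*_\lambda(z;N)$ with $\Phi^*_\lambda(z;N) := \sum_{\ell=0}^{2L-1} \phi^*_{\lambda,\ell}(z)/N^\ell$, the conjecture $e_t = 3c_t$ for every $t \in \Z_{\gqs 0}$ will follow from the exact bound
\begin{equation*}
\int_{\mathcal Q} e^{-N p(z)}\, f_{\mathcal C,\lambda}(z)\, e^{v_\mathcal C(z;N,0)}\Bigl(\Phi^*_\lambda(z;N) - \tfrac{3}{2}\Bigr)\, dz = O\bigl(|w(0,-2)|^{-N} e^{-\eta N}\bigr)
\end{equation*}
for some $\eta > 0$, since such a bound sits below every term of the asymptotic expansion of $\mathcal C_2$ from Theorem \ref{c2se}.

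The first step is to establish two explicit identities. Using $\li(1) - \li(e^{2\pi i z}) = 2\pi i z\, p(z)$ together with $p(z) - \log(1-e^{2\pi i z}) = -z\, p'(z)$ from \eqref{pzzz}, direct substitution into the definition of $\phi_{0,0}$ gives $\phi^*_{\lambda,0}(z) - 3/2 = z^2 p'(z)/(2\pi i)$. Differentiating the formula for $g_\ell$ yields $\tilde g_\ell(z) = -z g_\ell'(z)$, whence $\phi^*_{\lambda, 2m}(z) = -z^2 g_m'(z)/(2\pi i)$ for $m \gqs 1$ and $\phi^*_{\lambda,\ell}(z) \equiv 0$ for odd $\ell \gqs 3$. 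Combining these with $v_\mathcal C'(z;N,0) = \sum_{\ell \gqs 1} g_\ell'(z)/N^{2\ell-1}$, the even-indexed terms of $\Phi^*_\lambda - 3/2$ reassemble exactly into $-(z^2/(2\pi i N))\, v_\mathcal C'$, and I obtain the exact decomposition
\begin{equation*}
\Phi^*_\lambda(z;N) - \tfrac{3}{2} = \frac{z^2}{2\pi i\, N}\bigl(N p(z) - v_\mathcal C(z;N,0)\bigr)' + \frac{\phi^*_{\lambda,1}(z)}{N}.
\end{equation*}

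Integration by parts, exploiting $(Np - v_\mathcal C)'\, e^{-Np+v_\mathcal C} = -(e^{-Np+v_\mathcal C})'$, converts the difference into
\begin{equation*}
I_\mathcal E - \tfrac{3}{2}\, I_\mathcal C = -\Bigl[\frac{z^2 f_{\mathcal C,\lambda}(z)}{2\pi i\, N}\, e^{-Np(z) + v_\mathcal C(z;N,0)}\Bigr]_{\partial \mathcal Q} + \frac{1}{N}\int_{\mathcal Q} e^{-Np+v_\mathcal C}\, \tilde f(z)\, dz,
\end{equation*}
where $\tilde f(z) := (z^2 f_{\mathcal C,\lambda}(z))'/(2\pi i) + f_{\mathcal C,\lambda}(z)\, \phi^*_{\lambda,1}(z)$. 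The crux of the argument is the pointwise identity $\tilde f(z) \equiv 0$, which I verify algebraically. From $q_\mathcal C(z) = (z/(2\sin\pi z))^{1/2} e^{-\pi i z/2}$ the logarithmic derivative is $q_\mathcal C'(z)/q_\mathcal C(z) = 1/(2z) - (\pi/2)\cot(\pi z) - \pi i/2$, so $z^2 f_{\mathcal C,\lambda}'/(2\pi i\, f_{\mathcal C,\lambda}) = 1/(4\pi i z) \cdot z^2 - z^2 \cot(\pi z)/(4i) - z^2/4 - \lambda z^2$. Combining with $(z^2)'/(2\pi i) = z/(\pi i)$ and the explicit $\phi^*_{\lambda,1}(z) = z^2\cot(\pi z)/(4i) + z^2/4 - 5z/(4\pi i) + \lambda z^2$, the $\cot(\pi z)$ terms, the constant term, and the $\lambda z^2$ pieces all cancel, leaving $\tilde f/f_{\mathcal C,\lambda} = z/(\pi i) - 5z/(4\pi i) + z/(4\pi i) = 0$.

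With $\tilde f \equiv 0$ only the boundary terms at $z = 2.01$ and $z = 2.49$ remain, and these are controlled by the strict inequality $\Re(p(z)) > \Re(p(z_1))$ on $\mathcal Q \setminus \{z_1\}$ from \cite[Thm.~5.8]{OS2}, yielding a bound of $O(|w(0,-2)|^{-N} e^{-\eta N}/N)$ for some $\eta > 0$. This is exponentially smaller than every $|w(0,-2)|^{-N}/N^{m+2}$ in the expansion, so $e_t(\lambda) = 3\,c_t(\lambda)$ for all $t \in \Z_{\gqs 0}$. The main obstacle is the pointwise vanishing of $\tilde f$, which looks miraculous when stated as identities between $\cot(\pi z)$, the dilogarithm-derived $p'(z)$, and the auxiliary $\phi^*_{\lambda,1}$; however, once the two structural identities $\phi^*_{\lambda,0} - 3/2 = z^2 p'/(2\pi i)$ and $\tilde g_\ell = -z g_\ell'$ are in hand, the cancellation is forced by integration by parts and the Euler-Maclaurin origin of $\phi^*_{\lambda,1}$ in the double-pole residue factor $\phi(N,k,\sigma)$ from \eqref{pink}.
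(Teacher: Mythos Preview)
The paper does \emph{not} prove this statement: it is presented in the ``Future work'' section as a conjecture, supported only by the numerical coincidence between Tables \ref{c2n1} and \ref{e1n1}. So there is no paper proof to compare against; what you have written is a genuine proof of an open conjecture from the paper.

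Your argument is correct. I verified the key identities independently:
\begin{itemize}
\item From $p(z)=\frac{\li(1)-\li(e^{2\pi i z})}{2\pi i z}$ and \eqref{pzzz} one gets $\log(1-e^{2\pi i z})=p(z)+zp'(z)$, and substituting into the definition of $\phi_{0,0}$ yields exactly $\phi^*_{\lambda,0}(z)-\tfrac32=\dfrac{z^2 p'(z)}{2\pi i}$.
\item Differentiating $g_\ell(z)=-\frac{B_{2\ell}}{(2\ell)!}(\pi z)^{2\ell-1}\cot^{(2\ell-2)}(\pi z)$ and comparing with the definition of $\tilde g_\ell$ gives $\tilde g_\ell=-z g_\ell'$, hence $\phi^*_{\lambda,2m}(z)=-\dfrac{z^2 g_m'(z)}{2\pi i}$ for $m\gqs 1$. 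The even-index range $2\lqs 2m\lqs 2L-1$ matches $1\lqs m\lqs L-1$, precisely the range in $v_{\mathcal C}'(z;N,0)$, so the decomposition $\Phi^*_\lambda-\tfrac32=\dfrac{z^2}{2\pi i N}(Np-v_{\mathcal C})'+\dfrac{\phi^*_{\lambda,1}}{N}$ is exact.
\item After integration by parts, the computation $\tilde f\equiv 0$ checks out: with $f_{\mathcal C,\lambda}'/f_{\mathcal C,\lambda}=\tfrac{1}{2z}-\tfrac{\pi}{2}\cot\pi z-\tfrac{\pi i}{2}-2\pi i\lambda$ one finds
\[
\frac{(z^2 f_{\mathcal C,\lambda})'}{2\pi i\,f_{\mathcal C,\lambda}}=\frac{5z}{4\pi i}-\frac{z^2\cot\pi z}{4i}-\frac{z^2}{4}-\lambda z^2,
\]
which is exactly $-\phi^*_{\lambda,1}(z)$.
\item The boundary terms at $2.01$ and $2.49$ are controlled by the strict inequality \eqref{aqwc}, giving an $O(|w(0,-2)|^{-N}e^{-\eta N}/N)$ bound; the integral-representation errors $O(Ne^{0.05N/2})=O(Ne^{0.025N})$ are likewise exponentially smaller than $|w(0,-2)|^{-N}\approx e^{0.0257N}$. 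Hence $\mathcal E_1-3\mathcal C_2$ is $o(|w(0,-2)|^{-N}/N^T)$ for every $T$, forcing $e_t(\lambda)=3c_t(\lambda)$ for all $t$.
\end{itemize}

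Your proof goes strictly beyond the paper, turning its numerical observation into a theorem by exploiting two structural facts ($\phi^*_{\lambda,0}-3/2$ is essentially $p'$, and $\tilde g_\ell$ is essentially $g_\ell'$) that reduce the comparison to an integration by parts with an integrand that vanishes identically.
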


We may finally ask how  our asymptotic results extend to other examples of the more general waves of Section \ref{gend}.

{\small
\bibliography{wavedata}
}

{\small 
\vskip 5mm
\noindent
\textsc{Dept. of Math, The CUNY Graduate Center, 365 Fifth Avenue, New York, NY 10016-4309, U.S.A.}

\noindent
{\em E-mail address:} \texttt{cosullivan@gc.cuny.edu}
}

\end{document}